\documentclass[11pt,twoside]{amsart}

\usepackage[latin1]  {inputenc}%
\usepackage[T1]      {fontenc }%
\usepackage          {amsmath }%
\usepackage          {amsfonts}%
\usepackage          {amssymb }%
\usepackage          {amsthm  }%
\usepackage          {a4wide  }%
\usepackage          {url     }%
\usepackage          {tikz    }%
\usepackage[bookmarks=false,pdfborder={0 0 0.05}]{hyperref}
\usepackage[all]{xy}
\usepackage{tikz-cd}

\usepackage{enumerate, amsmath, amsfonts, amssymb, amsthm,  wasysym, graphics, graphicx, xcolor, frcursive,comment,bbm}

\usepackage{etex}

\definecolor{darkblue}{rgb}{0.0,0,0.7} 
\definecolor{darkred}{rgb}{0.7,0,0} 

\usepackage{hyperref}
\usepackage[all]{xy}
\usepackage[T1]{fontenc}

\usepackage{MnSymbol}

\def\defn#1{{\sf #1}}

\newcommand{\edge}{\mathbin{\tikz [semithick, baseline=-0.2ex,-latex, ->] \draw [-] (0pt,0.4ex) -- (1em,0.4ex);}} 

\newcommand{\RR}{\mathbb R}
\newcommand{\ZZ}{\mathbb Z}

\newcommand{\spanz}{\mathrm{span}_{\mathbb{Z}}}

\DeclareMathOperator{\Red}{Red}

\DeclareMathOperator{\GL}{GL}

\DeclareMathOperator{\B}{\mathcal{B}}

\DeclareMathOperator{\NN}{\mathbb{N}}

\newtheorem{theorem}{Theorem}[section]
\newtheorem{corollary}[theorem]{Corollary}
\newtheorem{proposition}[theorem]{Proposition}
\newtheorem{lemma}[theorem]{Lemma}

\theoremstyle{definition}
\newtheorem{definition}[theorem]{Definition}

\newtheorem{remark}[theorem]{Remark}
\newtheorem{example}[theorem]{Example}

\newtheorem{question}[theorem]{Question}

\title[Reflection factorizations and quasi-Coxeter elements]{Reflection factorizations and quasi-Coxeter elements}

\author[P.~Wegener]{Patrick Wegener}
\address{Patrick Wegener, Leibniz Universit\"at Hannover, Germany}
\email{patrick.wegener@math.uni-hannover.de}

\author[S.~Yahiatene]{Sophiane Yahiatene}
\address{Sophiane Yahiatene, Universit\"at Bielefeld, Germany}
\email{syahiate@math.uni-bielefeld.de}


\begin{document}

\begin{abstract}
We investigate the so-called \textit{dual Matsumoto property} or \textit{Hurwitz action} in finite, affine and arbitrary Coxeter groups. In particular, we want to investigate how to reduce reflection factorizations and how two reflection factorizations of the same element are related to each other. We are motivated by the dual approach to Coxeter groups proposed by Bessis in \cite{Bessis} and the question whether there is an anlogue of the well known Matsumoto property for reflection factorizations. Our aim is a substantial understanding of the Hurwitz action. We therefore reprove uniformly results of Lewis--Reiner as well as Baumeister--Gobet--Roberts and the first author on the Hurwitz in finite Coxeter groups. Further we show that in an arbitrary Coxeter group all reduced reflection factorizations of the same element appear in the same Hurwitz orbit after a suitable extension by simple reflections.

As parabolic quasi-Coxeter elements play an outstanding role in the study of the Hurwitz action, we aim to characterize these elements. We give characterizations of maximal parabolic quasi-Coxeter elements in arbitrary Coxeter groups as well as a characterization of all parabolic quasi-Coxeter elements in affine Coxeter groups.
\end{abstract}

\maketitle


\section{Introduction}

The so-called \textit{Matsumoto property} states that for a Coxeter system $(W,S)$ any two $S$-reduced factorizations of the same element can be transformed one into the other by just using the braid relations (see \cite{Matsumoto}). In the \textit{dual approach} to Coxeter groups, as suggested by Bessis in \cite{Bessis}, the generating set $S$ is replaced by the set $T$ of all reflections for $(W,S)$. It naturally arises the question whether there is an analogue of the Matsumoto property for the dual approach. Namely, given two $T$-reduced factorizations of the same element, is there a procedure to transform both factorizations one into the other? Given an element $w \in W$ and a factorization $w=t_1\cdots t_m$ into reflections (reduced or not reduced), one may transform this factorization as follows to obtain new factorizations for $w$:
\begin{align*}
(t_1,\dots, t_m) & \sim(t_1,\dots, t_{i-1}, t_i t_{i+1}t_i,t_i,t_{i+2},\dots, t_m),\\
(t_1,\dots, t_m) & \sim(t_1,\dots, t_{i-1}, t_{i+1}, t_{i+1}t_it_{i+1},t_{i+2},\dots, t_m).
\end{align*}
These transformations are called \textit{Hurwitz moves}. In fact, the Hurwitz moves extend to an action of the \textit{$m$-strands braid group}, called \textit{Hurwitz action} (see Section \ref{sec:Hurwitz} for the precise definitions). It has been shown by Lewis and Reiner \cite[Corollary 1.4]{LR16} that the Hurwitz action in finite Coxeter groups can be used to reduce reflection factorizations. Their proof is case-based, including large computer calculations for the exceptional types. In an attempt to better understand the Hurwitz action as well as the dual approach, as our first main result we provide a uniform proof of the following result.

\begin{theorem}[{Lewis--Reiner}] \label{thm:main1}
Let $(W,S)$ be a finite Coxeter system and $w=t_{1}\cdots t_{m+2k}\in W$ a reflection factorization with $\ell_{T}(w)=m$ and $k\in \ZZ_{\geq 0}$. Then there exists a braid $\sigma \in \B_{m+2k}$ such that
\[\sigma(t_{1},\ldots, t_{m+2k})=(r_{1},\ldots,r_{m},r_{i_{1}},r_{i_{1}},\ldots,r_{i_{k}},r_{i_{k}}).\]
\end{theorem}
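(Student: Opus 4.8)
The plan is to argue by induction on $k$, reducing everything to the problem of producing a single cancelling pair. For $k=0$ the factorization is already $T$-reduced and $\sigma=\idop$ works. For $k\geq 1$ the factorization has length $m+2k>\ell_{T}(w)$, so it is not reduced, and the heart of the matter is the following \emph{reduction step}: every non-reduced reflection factorization is Hurwitz-equivalent to one in which two \emph{adjacent} letters coincide. Granting this, I would pick such a pair $(\ldots,u,u,\ldots)$ and slide it to the far right. The sliding is a routine two-move computation: a Hurwitz move on the last two of three consecutive letters gives $(u,u,x)\mapsto(u,uxu,u)$, and a move on the first two gives $(u,uxu,u)\mapsto(x,u,u)$ since $u^{2}=\idop$, so a cancelling pair commutes past any reflection to its right. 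After moving the pair to the end we obtain $(\tilde t_{1},\ldots,\tilde t_{n-2},u,u)$ with $\tilde t_{1}\cdots\tilde t_{n-2}=w$ and $n-2=m+2(k-1)$; applying the induction hypothesis to this shorter factorization and re-appending $(u,u)$ finishes the step, the total braid being the composite of the braids produced along the way.

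It remains to prove the reduction step, where the real work lies. Using the standard criterion (Carter) that a factorization $t_{1}\cdots t_{\ell}$ is $T$-reduced if and only if the associated roots $\alpha_{t_{1}},\ldots,\alpha_{t_{\ell}}$ are linearly independent, non-reducedness yields a first index $j$ with $\alpha_{t_{j}}\in\Span(\alpha_{t_{1}},\ldots,\alpha_{t_{j-1}})$. Setting $r=j-1$ and $c=t_{1}\cdots t_{r}$, the prefix is reduced, its roots span $U:=\IM(c-\idop)$, and $\alpha_{t_{j}}\in U$. By Steinberg's theorem the pointwise stabiliser of $\Fix(c)$ is a parabolic subgroup $P$ of rank $r$ whose reflections are exactly those with root in $U$; hence both $c$ and $t_{j}$ lie in $P$, and I have reduced to a purely local statement inside $P$: a reduced factorization $c=t_{1}\cdots t_{r}$ together with a reflection $t=t_{j}\in P$ should be transformable, by Hurwitz moves, into a factorization exhibiting a repeat.

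The mechanism I would use is this: if $c$ is moreover a \emph{parabolic quasi-Coxeter element} of $P$, then the Hurwitz action is transitive on its reduced factorizations and every reflection $t\in P$ lies below $c$ in the reflection order, so $t$ occurs as the last letter of some reduced factorization $(t_{1}',\ldots,t_{r-1}',t)$ Hurwitz-equivalent to $(t_{1},\ldots,t_{r})$; then $(t_{1},\ldots,t_{r},t)\sim(t_{1}',\ldots,t_{r-1}',t,t)$ has the desired adjacent repeat. (Equivalently, once one reaches a pattern $(\ldots,a,b,a,\ldots)$ the single move $(a,b,a)\mapsto(aba,a,a)$ produces the pair.) I would take the transitivity statement for parabolic quasi-Coxeter elements as the key input, reproved uniformly elsewhere in the paper.

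The main obstacle is that the naive prefix $c=t_{1}\cdots t_{r}$ need \emph{not} be a parabolic quasi-Coxeter element: already in $B_{2}$ the element $c=-\idop=s_{e_{1}}s_{e_{2}}$ is a product of $r=2$ reflections generating only a rank-two subsystem $A_{1}\times A_{1}\subsetneq B_{2}=P$, its reduced factorizations split into two Hurwitz classes with last letters $\{s_{e_{1}},s_{e_{2}}\}$ and $\{s_{e_{1}+e_{2}},s_{e_{1}-e_{2}}\}$, and $t$ may lie in neither set. The resolution — and the technical core I would have to supply — is to first use Hurwitz moves \emph{within the whole factorization} to replace the redundant prefix by a reduced factorization of a genuine parabolic quasi-Coxeter element of $P$ before invoking transitivity; concretely, the single move $(s_{e_{2}},s_{e_{1}},s_{e_{1}+e_{2}})\mapsto(s_{e_{2}},s_{e_{1}-e_{2}},s_{e_{1}})$ turns the length-two prefix into a Coxeter element of $B_{2}$, after which $t=s_{e_{1}}$ is realisable as a last letter and the repeat appears. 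Establishing that such a reorganisation is always available, uniformly and without case-checking, is the delicate point on which the whole argument turns.
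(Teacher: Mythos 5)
Your overall induction scheme (find an adjacent repeated pair, commute it to the right, recurse) matches the paper's, and your use of Carter's criterion to locate the first redundant letter and place everything inside a parabolic subgroup is sound. But the argument has a genuine gap, and it sits exactly where you say it does: the ``reduction step'' asserting that every non-reduced factorization is Hurwitz-equivalent to one with two adjacent equal letters is, for $k=1$, the entire content of the theorem, and your mechanism for proving it does not go through. You propose to invoke Hurwitz transitivity for a (parabolic) quasi-Coxeter element $c=t_1\cdots t_r$ together with the fact that every reflection of $P$ lies below $c$; you then correctly observe (via $-\idop$ in $B_2$) that the prefix $c$ need not be quasi-Coxeter in $P$, and the proposed repair --- ``first reorganise the prefix into a quasi-Coxeter element by Hurwitz moves on the whole tuple'' --- is precisely the delicate point, which you leave unproven. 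A single worked example in $B_2$ does not establish that such a reorganisation is always available, so the proof is incomplete. There is a second, structural problem: even if the repair were supplied, resting the argument on transitivity for parabolic quasi-Coxeter elements imports \cite[Theorem 1.1]{BGRW17}, which is case-based outside the crystallographic types (the paper's uniform Theorem \ref{thm:main2} covers Weyl groups only), so your proof would not be uniform for $H_3$, $H_4$, $I_2(m)$ --- defeating the stated purpose of reproving Lewis--Reiner.

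The paper closes the gap by an entirely different device that avoids quasi-Coxeter elements altogether. It takes the \emph{maximal} reduced prefix $w'=t_1\cdots t_l$ (so $\ell_T(w't_{l+1})=l-1$), writes $w'=x^{-1}t_{l+1}$, and assumes for contradiction that no tuple in the Hurwitz orbit of $(t_1,\ldots,t_{l+1})$ has a repeated letter. Proposition \ref{prop:key_prop} (the Bruhat-graph ``normal form'': any such orbit contains a tuple whose associated path is first decreasing then increasing) applies, and since the path runs from $x$ to $e$ it must be entirely decreasing; the strong exchange condition then forces all $l+1$ reflections into the parabolic closure $P(x)$, which by Lemma \ref{lem:rk_par_cl} has rank $l-1$. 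Carter's lemma bounds $\ell_{T\cap P(x)}(w')$ by $l-1$, contradicting $\ell_T(w')=l$. That rank-versus-length contradiction is the uniform replacement for the transitivity input you were missing; without it, or something equivalent, your reduction step remains unestablished.
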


The previous theorem fails to be true in arbitrary Coxeter groups. Nevertheless, we show that in an arbitrary Coxeter group all reduced reflection factorizations of the same element appear in the same orbit with respect to the Hurwitz action. We therefore have to extend these reduced reflection factorizations by suitable simple reflections.

\begin{theorem} \label{thm:main1.1}
Let $(W,S)$ be a Coxeter group,  $w=s_{1}\cdots s_{m}$ a reduced factorization of $w\in W$ in simple reflections and $(t_{1},\ldots,t_{n})$ a reduced reflection factorization of $w$. Then there exist $q_{1},\ldots,q_{m-n}\in S$  and a braid $\sigma \in \B_m$ such that  
$$
\sigma(s_{1},\ldots,s_{m})=(q_{1},\ldots,q_{m-n},t_{1},\ldots,t_{n}).$$
\end{theorem}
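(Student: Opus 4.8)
The plan is to induct on $m=\ell_S(w)$, the $S$-length of $w$. First note that the parity of the length of a reflection factorization of a fixed element is an invariant, so $m\equiv n\pmod 2$ and $m-n$ is a non-negative even integer; moreover, since the Hurwitz action preserves the product, any simple reflections $q_1,\dots,q_{m-n}$ occurring as claimed must satisfy $q_1\cdots q_{m-n}=e$. The base case is $m=n$. There the reduced word $(s_1,\dots,s_m)$ is itself a reduced reflection factorization of $w$; writing $J$ for the support of $w$, one gets $n=\ell_T(w)\le \rk(W_J)\le m=n$, so the $s_i$ are pairwise distinct and $w$ is a Coxeter element of the standard parabolic subgroup $W_J$, i.e.\ a parabolic Coxeter element of $(W,S)$. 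For such elements the Hurwitz action is transitive on reduced reflection factorizations, which yields a braid $\sigma$ with $\sigma(s_1,\dots,s_m)=(t_1,\dots,t_n)$ and finishes the base case.

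For the inductive step assume $m>n$ and consider the first letter $s_1$ of the reduced word. Since left multiplication by the reflection $s_1$ changes reflection length by exactly $\pm 1$, there are two cases. If $\ell_T(s_1w)=n+1$, then $(s_1,t_1,\dots,t_n)$ is a reduced reflection factorization of $w'=s_1w$, while $(s_2,\dots,s_m)$ is a reduced $S$-word for $w'$ with $\ell_S(w')=m-1$. Applying the induction hypothesis to $w'$ produces a braid $\sigma'\in\B_{m-1}$ and simple reflections with $\sigma'(s_2,\dots,s_m)=(q_1,\dots,q_{m-n-2},s_1,t_1,\dots,t_n)$. Regarding $\sigma'$ as the braid in $\B_m$ that fixes the first strand and acts on strands $2,\dots,m$ as before, we obtain $\sigma'(s_1,\dots,s_m)=(s_1,q_1,\dots,q_{m-n-2},s_1,t_1,\dots,t_n)$, which exhibits exactly $m-n$ simple reflections in front of $(t_1,\dots,t_n)$, as required.

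The remaining case is $\ell_T(s_1w)=n-1$. Now $w'=s_1w$ satisfies $\ell_T(w')=n-1$ and has the reduced $S$-word $(s_2,\dots,s_m)$ of length $m-1$. Fixing any reduced reflection factorization $(r_1,\dots,r_{n-1})$ of $w'$ and applying the induction hypothesis, then prepending $s_1$ on a fixed first strand, connects the reduced $S$-word of $w$ by Hurwitz moves to $(s_1,q_1,\dots,q_{m-n},r_1,\dots,r_{n-1})$ with $q_1\cdots q_{m-n}=e$. Bubbling $s_1$ to the right past the (simple) $q_i$ using the move $(a,b)\sim(b,bab)$ leaves the $q_i$ unchanged at the front and turns $s_1$ into a conjugate reflection $c$; because the prefix multiplies to $e$, the tail $(c,r_1,\dots,r_{n-1})$ then has product $w$ and length $n$, hence is a reduced reflection factorization of $w$. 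Thus we have reached $(q_1,\dots,q_{m-n},c,r_1,\dots,r_{n-1})$, which already has $m-n$ simple reflections up front followed by a reduced reflection factorization of $w$.

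It remains to transform the tail $(c,r_1,\dots,r_{n-1})$ into the prescribed factorization $(t_1,\dots,t_n)$, and this is the main obstacle. The Hurwitz action is not transitive on reduced reflection factorizations of an arbitrary $w$, so this matching cannot be performed inside the tail alone and must genuinely use the $m-n$ prepended simple reflections, which multiply to the identity and can be braided into and out of the tail, conjugating its entries and inserting cancelling pairs. I expect this step to be handled by a dedicated lemma that isolates the relevant invariant of the Hurwitz orbit---the reflection subgroup $\langle c,r_1,\dots,r_{n-1}\rangle$ generated by the factorization---and reduces the matching to the transitivity statement for (parabolic) quasi-Coxeter elements that is the central theme of the paper. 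Establishing this compatibility is where the real work lies; the two reductions above are then routine.
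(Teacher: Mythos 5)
There is a genuine gap, and you have named it yourself: your argument terminates, in the case $\ell_T(s_1w)=n-1$, with the tuple $(q_1,\ldots,q_{m-n},s_1,r_1,\ldots,r_{n-1})$ whose tail is \emph{some} reduced reflection factorization of $w$, not the prescribed one $(t_1,\ldots,t_n)$, and the matching step you defer to a hoped-for lemma is precisely the hard content of the theorem. The route you sketch for it cannot work as stated: $w$ is an arbitrary element of an arbitrary Coxeter group, so it need not be a parabolic quasi-Coxeter element, and the transitivity of the Hurwitz action on $\Red_T(w)$ is false in general (the paper only establishes it for parabolic quasi-Coxeter elements of Weyl groups, Theorem \ref{thm:main2}); indeed the whole point of Theorem \ref{thm:main1.1} is to have a substitute for transitivity in the general setting, so one cannot feed transitivity back in. There is also a smaller flaw in your base case: the inequality $\ell_T(w)\le \rk(W_J)$ is false for infinite Coxeter groups (reflection length is not bounded by the rank, e.g.\ in affine groups), so the claim that $m=n$ forces $w$ to be a parabolic Coxeter element needs a different justification. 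Your Case 1 reduction, by contrast, is fine.

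The paper avoids the matching problem entirely by normalizing the \emph{given} factorization first rather than inducting on $\ell_S(w)$. By Lemma \ref{lem:fac_in_N} (which rests on the Bruhat-graph normal form of Corollary \ref{cor:ordering} together with the strong exchange condition), $(t_1,\ldots,t_n)$ is Hurwitz equivalent, \emph{within} $\Red_T(w)$, to a factorization whose entries are right inversions read off from the fixed reduced word: $t_k=s_m\cdots s_{i_k}\cdots s_m$ with $i_n<\cdots<i_1$. Since such a braid on the last $n$ strands of $\B_m$ can be undone at the end, one may assume the $t_k$ have this form; the prefix $(q_1,\ldots,q_{m-n})$ is then the deleted subword $(s_1,\ldots,\widehat{s_{i_n}},\ldots,\widehat{s_{i_1}},\ldots,s_m)$, and a short explicit braid computation (induction on $n$, using $\sigma_{i_1}\cdots\sigma_{m-1}$) carries $(q_1,\ldots,q_{m-n},t_1,\ldots,t_n)$ to $(s_1,\ldots,s_m)$. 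That normalization step is exactly the missing ingredient your proposal would need.
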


Besides the fact that the Hurwitz action preserves the product of the transformed tuple, there is another natural invariant of the Hurwitz action. Namely, the Hurwitz action preserves the group generated by the corresponding tuples. Therefore quasi-Coxeter elements play an important role in the study of the Hurwitz action. An element $w$ is called a \textit{quasi-Coxeter element} (resp. \textit{parabolic quasi-Coxeter element}) if there exists a $T$-reduced factorization of $w$ which generates the group $W$ (resp. a parabolic subgroup of $W$). We call an element a \textit{proper} parabolic quasi-Coxeter element if it is a quasi-Coxeter element in a proper parabolic subgroup. As our next main result we provide a characterization of ``maximal'' parabolic quasi-Coxeter elements for arbitrary Coxeter groups of finite rank by means of the \textit{absolute order} $\leq_T$ (see Definition \ref{def:AbsOrder}) and the \textit{parabolic closure} (see Section \ref{sec:parabolic}).

\begin{theorem}\label{prop:prefix_quasi_cox}
Let $(W,S)$ be a Coxeter system of rank $n\in \NN$ and $x \in W$ with $\ell_T(x)= n-1$. The element $x$ is a proper parabolic quasi-Coxeter element if and only if there exists a quasi-Coxeter element $w \in W$ with $x \leq_T w$ and $P(x)\neq W$. In this case, the parabolic closure $P(x)$ of $x$ has rank $n-1$.
\end{theorem}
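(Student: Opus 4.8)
The plan is to prove the two implications separately, reducing the whole statement to the assertion that, for $x$ with $\ell_T(x)=n-1$, being a proper parabolic quasi-Coxeter element is the same as being a quasi-Coxeter element of the parabolic closure $P(x)$, with $P(x)$ of rank $n-1$. I will use repeatedly two standard features of the parabolic closure: every reflection occurring in a $T$-reduced factorization of an element $y$ lies in $P(y)$, and reflection length is preserved under passage to parabolic subgroups. I also use that a quasi-Coxeter element of a Coxeter group of rank $r$ has reflection length exactly $r$, so that in particular a quasi-Coxeter element of $W$ has reflection length $n$.

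For the implication ``$\Rightarrow$'', suppose $x$ is a quasi-Coxeter element of a proper parabolic subgroup $W'$, and fix a $T$-reduced factorization $x=t_1\cdots t_{n-1}$ with $\langle t_1,\dots,t_{n-1}\rangle=W'$. Each $t_i$ lies in $P(x)$, so $W'\subseteq P(x)$; conversely $P(x)\subseteq W'$ because $W'$ is a parabolic subgroup containing $x$, whence $W'=P(x)$. Since $x$ is a quasi-Coxeter element of $W'$, its reflection length equals the rank of $W'$; as $\ell_T(x)=n-1$ this gives $\operatorname{rank}(P(x))=n-1$, and in particular $P(x)\neq W$. As a parabolic subgroup of rank $n-1$, $P(x)$ is conjugate to a maximal standard parabolic, say $P(x)=g\,W_{S\setminus\{s\}}\,g^{-1}$; put $t:=gsg^{-1}\in T$. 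Then $\langle P(x),t\rangle=g\langle W_{S\setminus\{s\}},s\rangle g^{-1}=W$, so $t\notin P(x)$, and therefore $\ell_T(xt)=\ell_T(x)+1=n$ (were $\ell_T(xt)=n-2$, then $t$ would occur in a $T$-reduced factorization of $x$ and thus lie in $P(x)$). Consequently $xt=t_1\cdots t_{n-1}t$ is a $T$-reduced factorization of $w:=xt$ generating $\langle P(x),t\rangle=W$; so $w$ is a quasi-Coxeter element with $x\leq_T w$ and $P(x)\neq W$.

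For ``$\Leftarrow$'', let $w$ be a quasi-Coxeter element with $x\leq_T w$ and $P(x)\neq W$. Since $\ell_T(w)=n$ and $\ell_T(x)=n-1$, the relation $x\leq_T w$ gives $w=xt$ with $t\in T$ and this factorization $T$-reduced. Starting from a $T$-reduced factorization of $w$ that generates $W$, I would use the Hurwitz action together with the hypothesis $P(x)\neq W$ to arrive at a generating $T$-reduced factorization $(t_1,\dots,t_{n-1},t)$ whose prefix satisfies $t_1\cdots t_{n-1}=x$ (the last entry being necessarily $t$). Writing $G:=\langle t_1,\dots,t_{n-1}\rangle$, the equality $\langle G,t\rangle=W$ forces $\operatorname{rank}(G)=n-1$ (adjoining one reflection raises the rank by at most one), while $G\subseteq P(x)$ because each $t_i\in P(x)$. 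The crucial step is then to show that $G$ is a \emph{parabolic} subgroup; since $G\subseteq P(x)$ and $x\in G$, this yields $G=P(x)$. Granting it, $x=t_1\cdots t_{n-1}$ is a $T$-reduced factorization generating the parabolic subgroup $P(x)$, so $x$ is a parabolic quasi-Coxeter element, proper because $\operatorname{rank}(P(x))=n-1<n$; this also reestablishes that $P(x)$ has rank $n-1$.

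The main obstacle is precisely this last step of the converse: to produce a $T$-reduced factorization of $x$ that generates $P(x)$, equivalently to show that the initial segment $G$ of a suitable generating factorization of $w$ is the full parabolic closure and not merely a full-rank reflection subgroup of it. This is exactly where $P(x)\neq W$ must enter, and the hypothesis cannot be dropped: in infinite Coxeter groups the Hurwitz action need not be transitive on the $T$-reduced factorizations of a quasi-Coxeter element, and a full-rank reflection subgroup of a parabolic subgroup need not be parabolic. For instance, in $W=\widetilde{A}_2$ a nontrivial translation $x$ satisfies $\ell_T(x)=2=n-1$ and is a prefix of the Coxeter element $s_0s_1s_2$, yet $P(x)=W$ and $x$ is not a parabolic quasi-Coxeter element, so any argument must break exactly when $P(x)=W$. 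I therefore expect to prove the crux by working inside the corank-one parabolic $P(x)$ and using the reflection representation: the roots of $t_1,\dots,t_{n-1}$ form a basis of the $(n-1)$-dimensional span of the roots of $P(x)$, and the condition that adjoining the single root of $t$ yields a generating set of $W$ should force $\{t_1,\dots,t_{n-1}\}$ to be a simple system of $P(x)$, hence $G=P(x)$. Establishing this uniformly across the finite, affine and general cases — in particular controlling the Hurwitz action so as to realize $x$ as a prefix of a \emph{generating} factorization — is the delicate heart of the argument.
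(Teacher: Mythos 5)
Your \emph{only if} direction is correct and is essentially the paper's argument (you are in fact a little more careful than the paper in checking that $\ell_T(xt)=n$, so that $(t_1,\ldots,t_{n-1},t)$ is genuinely a reduced generating factorization of $w$). The \emph{if} direction, however, is not a proof: the two steps you single out as ``the delicate heart'' are exactly the content of the theorem, and neither the step you leave open nor the sketch you offer for the other one goes through as stated. First, you cannot ``use the Hurwitz action'' on $\Red_T(w)$ to pass from a generating reduced factorization of $w$ to one whose length-$(n-1)$ prefix multiplies to $x$: transitivity of the Hurwitz action on $\Red_T(w)$ is unavailable here (it is Theorem \ref{thm:main2}, proved only for Weyl groups, and its proof \emph{uses} the present theorem). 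The paper's key idea, which is missing from your proposal, is to leave $\Red_T(w)$ entirely: writing $w=tx$ with $t\in T$, it forms the \emph{non-reduced} length-$(n+1)$ factorization $(t,t_1,\ldots,t_n)$ of $x$ and applies the Bruhat-graph normal form (Proposition \ref{prop:key_prop}) to it. If every tuple in $\B_{n+1}(t,t_1,\ldots,t_n)$ had pairwise distinct entries, the resulting monotone path plus the strong exchange condition would force all of $t,t_1,\ldots,t_n$ into the standard parabolic $P(x)$, contradicting $\langle t_1,\ldots,t_n\rangle=W$ and $P(x)\neq W$; hence some tuple has a repeated entry, which can be shifted to the end to produce $(t_1',\ldots,t_{n-1}',t',t')$ with $(t_1',\ldots,t_{n-1}')\in\Red_T(x)$ and $\langle t_1',\ldots,t_{n-1}',t'\rangle=W$. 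This is where $P(x)\neq W$ enters, and your proposal has no mechanism playing this role.

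Second, your plan for showing that $G=\langle t_1',\ldots,t_{n-1}'\rangle$ is parabolic --- that the roots of the $t_i'$ span the root space of $P(x)$ and that adjoining the root of $t'$ ``should force'' $\{t_1',\ldots,t_{n-1}'\}$ to be a simple system of $P(x)$ --- does not work: a full-rank reflection subgroup of a parabolic subgroup need not equal that parabolic (already in $B_2$ two orthogonal reflections span but generate a proper $A_1\times A_1$ subgroup), and there is no reason the $t_i'$ themselves should form a simple system. The paper's actual argument is combinatorial, not root-theoretic: it runs Dyer's algorithm (Lemma \ref{le:DyerAlg}) on $T_0=\chi(G)\cup\{t'\}$, which must terminate in $\chi(W)=S$, and uses Proposition \ref{prop:root_in_parab_subgrp} (if $r\in P\cap T$ and $r'\in T\setminus P$ for a standard parabolic $P$, then $r\in\chi(\langle r,r'\rangle)$) to show that the set $\chi(G)$ is never altered by the algorithm; hence $S=\chi(G)\cup\{s\}$, which forces $\chi(G)=S\cap P(x)$ and $G=P(x)$. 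So while your overall architecture (reduce to showing $x$ is quasi-Coxeter in a rank-$(n-1)$ parabolic closure) agrees with the paper, the two essential mechanisms --- the passage to a non-reduced factorization of $x$ to exploit $P(x)\neq W$, and the canonical-simple-system argument identifying $G$ with $P(x)$ --- are absent, and the substitute you sketch for the second is invalid.
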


The importance of parabolic quasi-Coxeter elements in the study of the Hurwitz action is emphasized by a result of Baumeister, Gobet, Roberts and the first author \cite[Theorem 1.1]{BGRW17}. They showed that for finite Coxseter groups the parabolic quasi-Coxeter elements are precisely those elements with just one orbit for the Hurwitz action. Their proof is case-based. We are able to give a uniform proof, if only for Weyl groups.

\begin{theorem}\label{thm:main2}
Let $W$ be a Weyl group and $w \in W$ be a parabolic quasi-Coxeter element. Then the Hurwitz action is transitive on the set of reduced reflection factorizations of $w$.
\end{theorem}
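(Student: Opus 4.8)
The plan is to reduce to a genuine full-rank quasi-Coxeter element in a Weyl group and then induct on the rank, with the root lattice supplying the crystallographic input that confines the argument to Weyl groups.

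First I would pass from $W$ to the parabolic closure $P(w)$. Since $w$ is a parabolic quasi-Coxeter element, $P(w)$ is a parabolic subgroup, hence again a Weyl group, and $w$ is a quasi-Coxeter element of $P(w)$ with $\ell_T(w)=\rk P(w)$. Every reflection occurring in a reduced reflection factorization of $w$ already lies in $P(w)$: each factor $t$ satisfies $t\leq_T w$, so $t\in P(t)\subseteq P(w)$ by monotonicity of the parabolic closure (Section~\ref{sec:parabolic}). Since a Hurwitz move replaces $(t_i,t_{i+1})$ by $(t_it_{i+1}t_i,t_i)$, the entire Hurwitz orbit stays inside $P(w)$, and the reflections of $P(w)$ are exactly those of $W$ lying in it; hence the Hurwitz action on reduced factorizations of $w$ is the same whether computed in $W$ or in $P(w)$. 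After this reduction, and after splitting a reducible $W$ into irreducible factors (a quasi-Coxeter element of a reducible Weyl group is a product of quasi-Coxeter elements of the factors, and the Hurwitz action respects the decomposition), I may assume that $W=P(w)$ is irreducible and $\ell_T(w)=n=\rk W$.

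The crux, and where I expect the real difficulty to lie, is the following lattice statement: for a quasi-Coxeter element $w$ of a Weyl group $W$ of full rank, \emph{every} reduced reflection factorization $(t_1,\dots,t_n)$ generates $W$; equivalently, the associated positive roots $\beta_1,\dots,\beta_n$ form a $\ZZ$-basis of the root lattice $Q$ of $W$. I would attack this by examining the full-rank reflection subgroup $W''=\langle t_1,\dots,t_n\rangle$, in which $w$ is automatically a quasi-Coxeter element, and by showing that the finite-index sublattice $\bigoplus_i\ZZ\beta_i\subseteq Q$ must in fact be all of $Q$. This cannot be bootstrapped from the transitivity we are proving, and it genuinely uses the defining property of $w$: for instance $-1\in B_2$ has reduced factorizations $(s_{e_1},s_{e_2})$ and $(s_{e_1+e_2},s_{e_1-e_2})$ generating distinct proper subgroups, consistent with $-1$ not being quasi-Coxeter and with its factorizations splitting into several Hurwitz orbits.

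Granting this, I would induct on $n$, the cases $n\leq 1$ being immediate. Given two reduced factorizations $(t_1,\dots,t_n)$ and $(t_1',\dots,t_n')$, it suffices to transform them by braids in $\B_n$ so that they share a common last reflection $r$; then $(t_1,\dots,t_{n-1})$ and the transform of $(t_1',\dots,t_{n-1}')$ are reduced factorizations of the corank-one element $wr$, which by Theorem~\ref{prop:prefix_quasi_cox} is a proper parabolic quasi-Coxeter element whose parabolic closure has rank $n-1$, so the inductive hypothesis applies inside $P(wr)$. To produce the common last reflection I would show that, starting from $(t_1,\dots,t_n)$, the reflections attainable in the final slot under the Hurwitz action exhaust $\{t\in T: t\leq_T w\}$: this set is stable under the neighbourly conjugations $t_n\mapsto t_nt_{n-1}t_n$, and since the $\beta_i$ form a basis of $Q$ and $W$ is irreducible, the non-orthogonality graph on the roots below $w$ is connected, forcing attainability of $t_n'$ in particular. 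Here Theorems~\ref{thm:main1} and~\ref{thm:main1.1} streamline the bookkeeping: Theorem~\ref{thm:main1.1} compares any reduced factorization with a fixed reduced $S$-word by pushing the discrepancy into an identity-valued prefix, which Theorem~\ref{thm:main1} normalises to transparent pairs $(r_i,r_i)$ that commute freely through the tuple; but since such pairs cannot be deleted within a fixed-length Hurwitz action, they do not replace the root-lattice induction above.
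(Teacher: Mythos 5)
Your overall skeleton (restrict to $P(w)$, induct on the rank, arrange a common last reflection, then descend to the corank-one parabolic quasi-Coxeter element $wr$ via Theorem~\ref{prop:prefix_quasi_cox}) matches the paper's strategy, but the two steps you yourself identify as the crux are not actually proved, and they are precisely where all the difficulty sits. First, the lattice statement --- that \emph{every} reduced reflection factorization of a full-rank quasi-Coxeter element generates $W$, i.e.\ that $\bigoplus_i\ZZ\beta_i$ is all of $Q$ --- is only announced with a plan ("I would attack this by examining $W''$\dots"), not established. In the paper this statement appears as a \emph{corollary} of Theorem~\ref{thm:main2} (transitivity plus the invariance of the generated subgroup under Hurwitz moves), so using it as an input requires an independent argument that you do not supply; the paper's proof deliberately assumes only that \emph{one} of the two given factorizations generates $W$. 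Second, your mechanism for producing a common last reflection --- that the reflections attainable in the final slot exhaust $\{t\in T: t\leq_T w\}$ because the non-orthogonality graph on the roots below $w$ is connected --- is a claim of essentially the same strength as the theorem. Closure of the attainable set under the conjugations $t_n\mapsto t_nt_{n-1}t_n$ plus connectivity of a non-orthogonality graph does not obviously yield exhaustion (orthogonal neighbours contribute nothing, and one must control which conjugates actually arise in the last slot across the whole orbit); no argument is given for the implication "forcing attainability of $t_n'$".

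The paper closes exactly this gap with a different, and genuinely crystallographic, device: Proposition~\ref{prop:simple_gen} shows that for a Weyl group one can complete reflections of the corank-one parabolic $P=P(t_1,\dots,t_{n-1})$ by $t_n$ to a \emph{simple system} $\overline{S}$ of $W$ (this is where the root lattice enters, via an obtuse-cone descent on $\Delta_P\cup\{\beta\}$). With $t_n$ simple for $\overline{S}$, Proposition~\ref{prop:key_prop} applied to the path of the \emph{second} factorization from $x=(t_1\cdots t_{n-1})^{-1}$ to $t_n$ forces, after Hurwitz moves, a valley-shaped path whose increasing part has length exactly one (the all-decreasing case is excluded by strong exchange, since it would put $t_n$ into $P$); hence the last reflection becomes $t_n$ on the nose. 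Your proposal has no analogue of this step. Your concluding remarks about Theorems~\ref{thm:main1} and~\ref{thm:main1.1} do not repair this, as you note yourself. As written, the proposal is a correct reduction plus two unproved assertions, one of which risks circularity.
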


Using the previous results, we are able to characterize parabolic quasi-Coxeter elements in affine Coxeter groups. Note that an element of finite order in an affine Coxeter group is also called \textit{elliptic}.

\begin{corollary} \label{main4}
Let $(W,S)$ be an irreducible affine Coxeter system with set of reflections $T$ and let $x \in W$. Then $x$ is a proper parabolic quasi-Coxeter element if and only if there exists a quasi-Coxeter element $w\in W$ such that $x \leq_T w$ and $P(x) \neq W$.

Equivalently, $x$ is a proper parabolic quasi-Coxeter element if and only if there exists a quasi-Coxeter element $w\in W$ such that $x \leq_T w$ and $x$ is elliptic.
\end{corollary}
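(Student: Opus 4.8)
The plan is to obtain Corollary~\ref{main4} from Theorem~\ref{prop:prefix_quasi_cox} by dropping its rank hypothesis $\ell_T(x)=n-1$ in the affine setting, the additional ingredient being the special nature of parabolic subgroups of an irreducible affine Coxeter group. First I would record the dichotomy that a parabolic subgroup of such a group is either all of $W$ or finite: the proper standard parabolics $W_J$ with $J\subsetneq S$ of an irreducible affine system are of finite type, so every proper parabolic subgroup is finite. A finite subgroup fixes a point, and conversely an elliptic element lies in the stabiliser of any point it fixes, which is a finite parabolic subgroup; hence $P(x)\neq W$ if and only if $P(x)$ is finite if and only if $x$ is elliptic. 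This makes the two displayed equivalences identical, so it suffices to prove the first one.

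For the forward implication, suppose $x$ is a proper parabolic quasi-Coxeter element. Then $P(x)\neq W$ holds by definition, and it remains to produce a quasi-Coxeter element $w\geq_T x$. I would build $w$ by extending a reduced reflection factorization of $x$, which generates the finite parabolic $P(x)$, through a chain of parabolic subgroups of ranks $\rk P(x),\rk P(x)+1,\dots,n$, appending one reflection at each stage exactly as in the ``only if'' part of Theorem~\ref{prop:prefix_quasi_cox} applied inside the next larger parabolic, until the accumulated product generates $W$ and is therefore a quasi-Coxeter element; the extension techniques underlying Theorem~\ref{thm:main1.1} ensure such a completion exists.

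The substantial direction is the converse, which I would prove by induction on the corank $n-\ell_T(x)$. The base case $\ell_T(x)=n-1$ is precisely Theorem~\ref{prop:prefix_quasi_cox}. For $\ell_T(x)<n-1$, choose $x'$ with $x\leq_T x'\leq_T w$, $\ell_T(x')=\ell_T(x)+1$ and $P(x')\neq W$. By the dichotomy $P(x')$ is finite, and the inductive hypothesis applies to $x'$, so $x'$ is a quasi-Coxeter element of the finite Coxeter group $P(x')$ (a finite Weyl group, since irreducible affine groups are crystallographic). Now $x\leq_T x'$ exhibits $x$ as a corank-one prefix inside the finite system $P(x')$, and the parabolic closure of $x$ computed in $P(x')$ agrees with $P(x)$ computed in $W$. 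Applying Theorem~\ref{prop:prefix_quasi_cox} to $P(x')$ in place of $W$ --- using that the prefix satisfies $P(x)\subsetneq P(x')$, the known behaviour of quasi-Coxeter prefixes in finite Weyl groups, and the transitivity of Theorem~\ref{thm:main2} --- then shows $x$ is a proper parabolic quasi-Coxeter element of $P(x')$, hence of $W$.

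The hard part is arranging the cover $x\leq_T x'\leq_T w$ so that $P(x')\neq W$. A one-reflection extension of an elliptic prefix can turn non-elliptic with $P(x')=W$: this is the affine phenomenon whereby the reflection length stays small while the parabolic closure jumps to everything, as already happens for a single translation. I expect this to be the principal obstacle and would dispose of it by controlling $\Fix$ and $\Mov$ along a reduced reflection factorization of $w$. Since an affine quasi-Coxeter element is a glide with a one-dimensional translation direction, one can select its reduced reflection factorization with the unique ``affine'' reflection placed last, so that every proper prefix keeps a nonempty fixed space and is therefore elliptic. Transporting a given factorization exhibiting $x\leq_T w$ to such a form by the Hurwitz action, while preserving $x$ as a prefix, then yields a cover $x'$ lying in a proper finite parabolic and closes the induction.
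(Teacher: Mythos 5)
Your overall skeleton matches the paper's: use the dichotomy that proper parabolics of an irreducible affine system are finite (hence $P(x)\neq W\iff x$ elliptic), reduce to the corank-one case, which is Theorem \ref{prop:prefix_quasi_cox}, and descend into a finite parabolic where Corollary \ref{cor:FinCharQuasi} finishes. But the step you yourself flag as the principal obstacle --- producing an intermediate $x'$ with $x\leq_T x'\leq_T w$, $\ell_T(x')=\ell_T(x)+1$ and $P(x')\neq W$ --- is exactly where the proof has to do real work, and your proposed resolution does not constitute a proof. The claim that one can ``select a reduced reflection factorization with the unique affine reflection placed last'' and then ``transport a given factorization exhibiting $x\leq_T w$ to such a form by the Hurwitz action, while preserving $x$ as a prefix'' is unsubstantiated: Hurwitz moves that keep $x=t_1\cdots t_{n-k}$ as a prefix product may only act on the suffix $(t_{n-k+1},\dots,t_n)$, and it is not at all clear that such moves can realize your normal form. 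The paper closes precisely this gap with Proposition \ref{prop:InfiniteAffSub}: writing $t_i=s_{\alpha_i,\ell_i}$, it shows that finiteness of $\langle t_1,\dots,t_{n-k}\rangle$ forces $\alpha_1,\dots,\alpha_{n-k}$ to be linearly independent, and then a dimension count (the roots of a generating set must span an $(n-1)$-dimensional space) guarantees that some $t_j$ from the suffix keeps $\{\alpha_1,\dots,\alpha_{n-k},\alpha_j\}$ independent, whence $\langle t_1,\dots,t_{n-k},t_j\rangle$ is finite by ellipticity and Lemma \ref{lem:Tits}; Corollary \ref{cor:IndStep} then moves $t_j$ into position by Hurwitz moves on the suffix. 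Nothing in your sketch replaces this argument.

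Two further points. First, this linear-algebra step needs the hypothesis that the witnessing factorization $(t_1,\dots,t_n)$ of $w$ generates $W$; a priori $x\leq_T w$ only gives you \emph{some} reduced factorization with $x$ as a prefix, and the paper has to invoke \cite[Theorem 1.1]{Weg17a} (every reduced reflection factorization of an affine quasi-Coxeter element generates $W$) to proceed. Your proposal never addresses this. Second, your heuristic that an affine quasi-Coxeter element ``is a glide with a one-dimensional translation direction'' is plausible but unproved here and is not what the paper uses. So the proposal identifies the right difficulty but leaves it open; as written it is a plan rather than a proof.
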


Note that this result and Theorem \ref{prop:prefix_quasi_cox} partially generalize results of Bessis \cite[Lemma 1.4.3]{Bessis} as well as Paolini and Salvetti \cite[Theorem 3.22 (i)]{PS19} on parabolic Coxeter elements in finite and affine Coxeter groups.

\section{Background}

\subsection{Generalities on Coxeter groups}
Recall that a \defn{Coxeter group} is a group $W$ given by a presentation
$$
W = \langle  S \mid (st)^{m_{st}} = 1 ~\forall s,t \in S \rangle,
$$
where $(m_{st})_{s,t \in S}$ is a symmetric matrix with entries in $\ZZ_{\geq 1} \cup \{ \infty \}$. These entries have to satisfy $m_{ss}=1$ for all $s \in S$ and $m_{st} \geq 2$ for all $s \neq t$ in $S$. If $m_{st} = \infty$, then there is no relation for $st$ in the above presentation. The pair $(W,S)$ is called a \defn{Coxeter system}, $S$ is called the set of \defn{simple reflections} and $|S|$ is called the \defn{rank} of $(W,S)$. Further, if $|W|$ is finite the system is called \defn{finite} and otherwise it is called \defn{inifinite}. We assume all Coxeter systems in this paper to be of finite rank.

To each Coxeter system $(W,S)$ there is an associated labeled graph, called \defn{Coxeter diagram} and denoted by $\Gamma(W,S)$. Its vertex set is given by $S$ and there is an edge between distinct $s, t \in S$ labeled by $m_{st}$ if $m_{st} > 2$. The Coxeter system $(W,S)$ is called \defn{irreducible} if $\Gamma(W,S)$ is connected. 

Each $w \in W$ can be written as a product $w = s_{1} \cdots s_{k}$ with $s_{i} \in S$. The \defn{length} $\ell(w) = \ell_S(w)$ is defined to be the smallest integer $k$ for which such an expression exists. The expression $w = s_{1} \cdots s_{k}$ is called \defn{($S$-)reduced} if $k = \ell(w)$.

\medskip
Let $(W,S)$ be a Coxeter system and let $V$ be a vector space over $\mathbb{R}$ with basis $\Delta = \{e_s \, \mid \, s \in S\}$. We equip $V$ with a symmetric bilinear form $B$ by setting
\[ B(e_{s}, e_{t})= - \cos \frac{\pi}{m_{st}}\]
for all $s, t \in S$. This term is understood to be $-1$ if $m_{st}= \infty$. The group $W$ can be embedded into $\operatorname{GL}(V)$ via its natural representation (or Tits representation) $\sigma: W \to \GL(V)$ that sends $s \in S$ to the reflection
\[ \sigma_s: V \rightarrow V, ~v \mapsto v- 2 B(e_s, v) e_s. \]
We set $w(e_s) := \sigma(w)(e_s)$ and
$$
\Phi = \Phi(W,S):= \{ w(e_s) \mid w \in W,~s \in S \}.
$$
The set $\Phi$ is called the \defn{root system} for $(W,S)$ and we refer to $\Delta$ as the \defn{simple system} for $\Phi$. We call a root $\alpha= \sum_{s \in S} a_s e_s$
\defn{positive} and write $\alpha>0$ if $a_s \geq 0$ for all $s \in S$ and \defn{negative} if $a_s \leq 0$ for all $s \in S$. Let $\Phi^+$ be the set consisting of the positive roots. It turns out that
$\Phi$ decomposes into positive and negative roots, that is, $\Phi= \Phi^+ \dot{\cup} -\Phi^+$. 

If $\alpha = w(e_s) \in \Phi$ for some $w \in W$ and $s \in S$, then $wsw^{-1}$ acts as a reflection on $V$. It sends $\alpha$ to $-\alpha$ and fixes pointwise the hyperplane orthogonal to $\alpha$. We set $s_{\alpha} = wsw^{-1}$ and call $T= \{ wsw^{-1}\mid w \in W, ~s \in S\}$ the set of \defn{reflections} for $(W,S)$.

\subsection{Reflection factorizations and the Hurwitz action} \label{sec:Hurwitz}
Since $S \subseteq T$, we have $W = \langle T \rangle$. Therefore each $w \in W$ can be written as a product $w = t_{1} \cdots t_{m}$ with $t_{i} \in T$. We call this a \defn{reflection factorization} for $w$. The \defn{reflection length} $\ell_T(w)$ is defined to be the smallest integer $m$ for which such a factorization exists. The factorization $w = t_{1} \cdots t_{m}$ is called \defn{($T$-)reduced} or \defn{reduced reflection factorization} if $m = \ell_T(w)$. For $w \in W$ with $m= \ell_T(w)$ we further define its \defn{set of reduced reflection factorizations} as
$$
\Red_T(w):= \{ (t_1, \ldots, t_m) \in T^m \mid w = t_1 \cdots t_m \}.
$$
Note that we will use the terminology of a reflection factorization for an element $w \in W$ synonymously for the product $w=t_1\cdots t_m$ of reflections as well as the tuple $(t_1,\ldots, t_m)$. There is a nice criterion for finite Coxeter groups to determine whether a reflection factorization is reduced.

\begin{lemma}[{Carter, \cite[Lemma 3]{Ca70}}] \label{lem:Carter}
Let $(W,S)$ be a finite Coxeter system with root system $\Phi$. The reflection factorization $s_{\alpha_1} \cdots s_{\alpha_m}$ $(\alpha_i \in \Phi)$ is reduced if and only if $\alpha_1, \ldots , \alpha_m$ are linearly independent.
\end{lemma}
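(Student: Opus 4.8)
The plan is to recast everything in terms of the \emph{moved space} $M_w := \IM(w-\idop) \subseteq V$ and to prove the sharper statement $\ell_T(w) = \dim M_w$, from which both implications drop out. Throughout I use that for a finite Coxeter system the form $B$ is positive definite, so that $V = E \oplus E^{\perp}$ orthogonally for every subspace $E$; in particular $M_w = \Fix(w)^{\perp}$ (the inclusion $M_w \subseteq \Fix(w)^{\perp}$ holds because $B((w-\idop)x,y) = B(x,(w^{-1}-\idop)y)$ and $w$ is $B$-orthogonal, and equality follows by a dimension count). The first routine step is the easy containment: for any factorization $w = s_{\alpha_1}\cdots s_{\alpha_m}$ one has $M_w \subseteq \Span(\alpha_1,\dots,\alpha_m)$, which I would prove by induction on $m$ from the identity $w-\idop = s_{\alpha_1}(w'-\idop)+(s_{\alpha_1}-\idop)$ with $w'=s_{\alpha_2}\cdots s_{\alpha_m}$, using $\IM(s_{\alpha_1}-\idop)=\Span(\alpha_1)$ and that $s_{\alpha_1}$ maps $\Span(\alpha_2,\dots,\alpha_m)$ into $\Span(\alpha_1,\dots,\alpha_m)$. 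Hence $\dim M_w \le m$, with $\dim M_w \le m-1$ as soon as $\alpha_1,\dots,\alpha_m$ are linearly dependent; applying this to a reduced factorization yields the general bound $\dim M_w \le \ell_T(w)$.

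To handle the implication ``linearly independent $\Rightarrow$ reduced'' I would establish the dimension equality $\dim M_w = m$ for linearly independent $\alpha_1,\dots,\alpha_m$, by induction on $m$. Writing $w = s_{\alpha_1} w'$ with $w' = s_{\alpha_2}\cdots s_{\alpha_m}$, the inductive hypothesis gives $M_{w'} = \Span(\alpha_2,\dots,\alpha_m)$ and hence $\Fix(w') = \Span(\alpha_2,\dots,\alpha_m)^{\perp}$. Since $\dim M_w \le m$ already, I would assume $\dim M_w \le m-1$ and derive a contradiction: then $\Fix(w)$ strictly contains $U^{\perp}$, where $U = \Span(\alpha_1,\dots,\alpha_m)$, so (as $V = U \oplus U^{\perp}$) there is a nonzero $u \in \Fix(w)\cap U$. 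From $w(u)=u$ one gets $(w'-\idop)(u) = -\lambda\alpha_1$ with $\lambda = 2B(\alpha_1,u)/B(\alpha_1,\alpha_1)$; but the left-hand side lies in $M_{w'} = \Span(\alpha_2,\dots,\alpha_m)$, so linear independence forces $\lambda = 0$, whence $u \in \Fix(w') = \Span(\alpha_2,\dots,\alpha_m)^{\perp}$ and also $u \perp \alpha_1$. Thus $u \in U \cap U^{\perp} = \{0\}$, a contradiction. With $\dim M_w = m$ in hand, $m = \dim M_w \le \ell_T(w) \le m$, so the factorization is reduced.

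For the converse it suffices to upgrade the first bound to the equality $\ell_T(w)=\dim M_w$: granting this, a reduced factorization has $m = \ell_T(w) = \dim M_w$, and if its roots were dependent the first step would force $\dim M_w \le m-1$, a contradiction, so the roots are independent. To prove $\ell_T(w) \le \dim M_w$ I would induct on $\dim M_w$, the crux being to find, for $w \neq \idop$, a reflection $s_\alpha$ with $\alpha \in \Phi$ and $\dim M_{s_\alpha w} = \dim M_w - 1$; splitting off $s_\alpha$ and applying the inductive hypothesis then produces a reflection factorization of length $\dim M_w$. Here the geometry of the root system enters in two places: one needs a root $\alpha$ lying in $M_w$, after which a short computation using $M_w = \Fix(w)^{\perp}$ and the $s_\alpha$-invariance of $M_w$ shows the moved space strictly drops. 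The existence of such a root, namely $\Phi \cap M_w \neq \emptyset$ for $w \neq \idop$, is exactly the content of Steinberg's fixed-point theorem: the pointwise stabilizer of $\Fix(w)$ is generated by the reflections $s_\beta$ with $\beta \perp \Fix(w)$, that is, with $\beta \in \Phi \cap M_w$, and since $w$ lies in this stabilizer and is nontrivial, at least one such root must exist.

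The step I expect to be the main obstacle is precisely this last one, the inequality $\ell_T(w)\le\dim M_w$ furnishing the converse; the positive-definiteness of $B$ makes the dimension bookkeeping for the ``independent $\Rightarrow$ reduced'' direction essentially formal, but passing from linear dependence to an \emph{actual} shortening of the reflection length requires genuine input beyond linear algebra. I would resolve it via Steinberg's theorem as above; an alternative, more in the spirit of the present paper, would be to shorten a dependent factorization directly by inducting on the rank, using the minimal prefix $s_{\alpha_1}\cdots s_{\alpha_k}$ whose last root lies in $\Span(\alpha_1,\dots,\alpha_{k-1})$, restricting to the sub-root-system $\Phi \cap \Span(\alpha_1,\dots,\alpha_{k-1})$, and applying the statement in lower rank to produce a cancelling pair of reflections. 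Either route isolates the same essential difficulty, that reflection length must be identified with the moved-space dimension.
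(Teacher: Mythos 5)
Your proof is correct; the paper itself gives no argument for this lemma (it is quoted from Carter), and your route --- establishing the identity $\ell_T(w)=\dim \IM(w-\idop)$ via the moved space, with Steinberg's fixed-point theorem supplying a root in $\Fix(w)^{\perp}$ for the upper bound on $\ell_T(w)$ --- is essentially Carter's original proof of his Lemmas 2 and 3. All the key steps check out, including the orthogonality computation showing that $s_{\alpha}w$ fixes a preimage of $\alpha$ under $w-\idop$ whenever $\alpha\in\Phi\cap M_w$, so the moved space genuinely drops by one.
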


For an element $w \in W$ with $\ell_T(w)=m$, there is a natural action of the \defn{braid group} on $m$ strands on the set $\Red_T(w)$. More precisely, the braid group $\B_m$ is the group given by the following presentation
$$
\B_m = \langle \sigma_1, \ldots , \sigma_{m-1} \mid \sigma_{i}\sigma_{i+1}\sigma_{i}= \sigma_{i+1}\sigma_{i}\sigma_{i+1}, ~\sigma_{i}\sigma_{j}=\sigma_{j}\sigma_{i} ~\text{for }|i-j|>1 \rangle.
$$
This group acts on the set $\Red_T(w)$ in terms of its generators (and their inverse elements) as follows
\begin{align*}
\sigma_i (t_1,\dots, t_m) & =(t_1,\dots, t_{i-1}, t_i t_{i+1}t_i,t_i,t_{i+2},\dots, t_m),\\
\sigma_i^{-1} (t_1,\dots, t_m) & =(t_1,\dots, t_{i-1}, t_{i+1}, t_{i+1}t_it_{i+1},t_{i+2},\dots, t_m).
\end{align*}
It is straightforwad to check that this indeed extends to an action of $\B_m$ on $\Red_T(w)$, called \defn{Hurwitz action}. In the same way we can consider the Hurwitz action on arbitrary (not necessarily reduced) reflection factorizations of a given fixed length. We call the action of a generator $\sigma_i$ (resp. $\sigma_i^{-1}$) on the tuple $(t_1,\dots, t_m)$ a \defn{Hurwitz move}. Further we write 
$$
(t_1,...,t_m) \sim (r_1, ..., r_m)
$$
if both tuples lie in the same orbit with respect to the Hurwitz action. In this case we also say that both tuples are \defn{Hurwitz equivalent}. An orbit with respect to the Hurwitz action is called \defn{Hurwitz orbit}. We want to emphasize the following two invariants of the Hurwitz action.
\begin{remark}
Let $(t_1,...,t_m) \sim (r_1, ..., r_m)$ be Hurwitz equivalent reflection factorizations. Then:
\begin{enumerate}
\item[(a)] $\langle t_1,...,t_m \rangle = \langle r_1,...,r_m \rangle$;
\item[(b)] the tuples $(t_1,...,t_m)$ and $(r_1, ..., r_m)$ share the same multiset of conjugacy classes.
\end{enumerate}

\end{remark}

\subsection{Parabolic subgroups} \label{sec:parabolic}
For each subset $I \subseteq S$ the subgroup $W_I = \langle I \rangle$ is called a \defn{standard parabolic subgroup} of $W$. A subgroup of the form $w W_I w^{-1}$ for some $w \in W$ and $I \subseteq S$ is called a \defn{parabolic subgroup}. Note that if $wW_I w^{-1}$ is a parabolic subgroup, then $(wW_I w^{-1}, w I w^{-1})$ is itself a Coxeter system. 

Let $X \subseteq W$ be a finite set. The \defn{parabolic closure} of $X$ is defined to be the intersection of all parabolic subgroups containing $\langle X \rangle$. We denote the parabolic closure of X by $P(X)$. By \cite[Theorem 1.2]{Qi} we have that $P(X)$ is indeed itself a parabolic subgroup. In particular, $P(X)$ is precisely the smallest (with respect to inclusion) parabolic subgroup of $W$ containing $X$. If $X=\{x_1, \ldots , x_n\}$, we also write $P(X)=P(x_1,\ldots, x_n)$.

\subsection{Weyl groups and affine Coxeter groups}
Let $V$ be a real vector space with positive definite symmetric biilinear form $\langle -,- \rangle : V \times V \rightarrow \RR$. Let $\Phi$ be a crystallographic root system in $V$ (in the sense of \cite{Hum90}) with simple system $\Delta$. The set 
$$\Phi^{\vee}:=\{ \alpha^{\vee} \mid \alpha \in \Phi\},$$
where $\alpha^{\vee}:=\frac{2 \alpha}{(\alpha \mid \alpha)}$, is again a crystallographic root system in $V$ with simple system $\Delta^{\vee}:=\{ \alpha^{\vee} \mid \alpha \in \Delta \}$. The root system $\Phi^{\vee}$ is called the \defn{dual root system} and its elements are called \defn{coroots}. For a set of roots $R \subseteq \Phi$ we put $W_R:=\langle s_{\alpha} \mid \alpha \in R \rangle$ and call $W_{\Phi}$ a \defn{Weyl group}.

For a set of vectors $\Phi \subseteq V$ we set $L(\Phi):=\spanz(\Phi)$. If $\Phi$ is a crystallographic root system, then $L(\Phi)$ is an integral lattice, called \defn{root lattice}. In the latter case we call $L(\Phi^{\vee})$ the \defn{coroot lattice}.

Let us fix a crystallographic root system $\Phi$ in $V$ with simple system $\Delta$. For each $\alpha \in \Phi$ and each $k \in \ZZ$, the set 
$ H_{\alpha, k}:= \{v \in V \mid (v \mid \alpha)=k \}$
defines an affine hyperplane. We define the \defn{affine reflection} $s_{\alpha,k}$ in $H_{\alpha, k}$ by 
$$ s_{\alpha, k}: V \rightarrow V, v \mapsto v -((v \mid \alpha) -k) \alpha^{\vee}.$$
Then $s_{\alpha, k}$ fixes $H_{\alpha, k}$ pointwise and sends $0$ to $k \alpha^{\vee}$.

The group 
$$W_{a, \Phi} := \langle s_{\alpha, k} \mid \alpha \in \Phi, k \in \ZZ \rangle $$
is called \defn{affine Weyl group} associated to $\Phi$.

By \cite[Proposition 4.2, Theorem 4.6]{Hum90} the group $W_{a, \Phi}$ is the semidirect product of the Weyl group $W_{\Phi}$ and the coroot lattice $L(\Phi^{\vee})$.
Further, the group $W_{a, \Phi}$ is a Coxeter group. If $\Phi$ is irreducible, then $(W_{a, \Phi}, S)$ is a Coxeter system, where
$$S:= \{s_{\alpha} \mid \alpha \in \Delta \} \cup \{ s_{\widetilde{\alpha},1} \},$$
and $\widetilde{\alpha}$ is the highest root of $\Phi$ with respect to $\Delta$.
The set of reflections for $(W_{a, \Phi}, S)$ is given by the set of affine reflections, that is, by the set
$
 \{ s_{\alpha, k} \mid \alpha \in \Phi, ~k \in \ZZ\}.
$
Therefore we also call the affine Weyl group $W_{a, \Phi}$ an \defn{affine Coxeter group} and the pair $(W_{a, \Phi}, S)$ is called \defn{affine Coxeter system}. Note that there is a canonical projection from $W_{a, \Phi}$ to the underlying finite Weyl group, namely
\begin{align} \label{equ:ProjAffFin}
p:W_{a, \Phi} \rightarrow W_{\Phi},~s_{\alpha,k}\mapsto s_{\alpha}.
\end{align}

\section{Reflection subgroups and the Hurwitz action}

In this section we describe the connection between the Hurwitz action and the Bruhat graph for Coxeter systems of finite rank. We benefit from results of Dyer \cite{Dye87, Dye90}.

Let $(W,S)$ be a Coxeter system of finite rank and $T$ its set of reflections. A subgroup $W'$ is called a \defn{reflection subgroup} if it is generated by the reflections it contains, that is, $W'= \langle W' \cap T \rangle$. The reflection subgroup $W'$ is by \cite[Thereom 3.3]{Dye90} itself a Coxeter group with simple system 
\[\chi(W')=\lbrace t\in W' \cap T \mid \ell_{S}(tt')>\ell_{S}(t) \text{ for all } t'\in W'\cap T \text{ with } t\neq t' \rbrace.\]
The set $\chi(W')$ is called the \defn{canonical simple system} for $W'$. For later purpose we explicitely describe how to obtain the set $\chi(W')$ provided the set $T' = W' \cap T$ is known. The result ist due to Dyer.

\begin{lemma}[{\cite[Proposition 3.7]{Dye90}}] \label{le:DyerAlg}
Let $T' \subseteq T$ be a finite set. For $i \in \NN$ define sets $T_i$ as follows. Set $T_0 = T'$. Given $T_i$, set $T_{i+1}=T_i$ if $\chi(\langle t,t' \rangle)= \{t,t'\}$ for all $t,t' \in T$. Otherwise, choose $t,t' \in T$ with $\chi(\langle t,t' \rangle)\neq  \{t,t'\}$ and set $T_{i+1}=(T_i \setminus \{t,t'\}) \cup \chi(\langle t,t' \rangle)$. Then there exists some $i \in \NN$ with $T_i = T_{i+1}$ and $\chi(\langle T' \rangle)=T_i$ for this $i$. 
\end{lemma}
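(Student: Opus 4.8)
The plan is to follow the sequence $(T_i)$ through three features: it never leaves $T$ and always generates the same group; it strictly decreases a positive integer statistic, which forces it to stabilize; and its stable value is forced to be $\chi(\langle T'\rangle)$. First I would record that each $T_i\subseteq T$ and that $\langle T_i\rangle=\langle T'\rangle$ for every $i$. Indeed $\chi(\langle t,t'\rangle)\subseteq\langle t,t'\rangle\cap T\subseteq T$, and by \cite[Theorem 3.3]{Dye90} the canonical simple system generates its reflection subgroup, so $\langle\chi(\langle t,t'\rangle)\rangle=\langle t,t'\rangle$. Hence replacing $\{t,t'\}$ by $\chi(\langle t,t'\rangle)$ changes neither the ambient set $T$ nor the generated group; by induction all $T_i$ generate $W':=\langle T'\rangle$, and $\chi(\langle T'\rangle)$ is the canonical simple system of $W'$.

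For termination I would use the \emph{depth} statistic. For a reflection $t=s_{\alpha}$ with $\alpha\in\Phi^{+}$ write $\mathrm{dp}(t)$ for the depth of $\alpha$ (equivalently $\ell_S(t)=2\,\mathrm{dp}(t)-1$), and set $D(T_i)=\sum_{t\in T_i}\mathrm{dp}(t)\in\NN$. The key local fact is that the two reflections in $\chi(\langle t,t'\rangle)$ are exactly the two positive roots of smallest depth in the dihedral root subsystem spanned by $\alpha_t,\alpha_{t'}$, and that every other positive root of that subsystem has depth strictly larger than both of them. Granting this, at a genuine replacement, that is, when $\{t,t'\}\neq\chi(\langle t,t'\rangle)=\{r,r'\}$, at least one of $t,t'$ is not among the two minimal-depth reflections, and a short case distinction gives $\mathrm{dp}(r)+\mathrm{dp}(r')<\mathrm{dp}(t)+\mathrm{dp}(t')$. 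Since the replacement only affects the summands coming from $\{t,t'\}$, and any coincidence of $r,r'$ with elements already present only lowers the sum further, we get $D(T_{i+1})<D(T_i)$. A strictly decreasing sequence of positive integers is finite, so some $i$ satisfies $T_{i+1}=T_i$.

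Finally I would identify the stable set. When $T_{i+1}=T_i$ we have $\chi(\langle t,t'\rangle)=\{t,t'\}$ for all distinct $t,t'\in T_i$; since the canonical simple roots of a dihedral subsystem meet at a non-acute angle, this says precisely that $B(\alpha_t,\alpha_{t'})\leq 0$ for all such pairs, where $\alpha_t$ is the positive root of $t$. Thus $T_i$ is a set of positive roots with pairwise non-positive inner products generating $W'$. All positive roots lie in the open half-space on which the coordinate-sum functional is positive, so by the standard fact that a family of vectors with pairwise non-positive products inside such a half-space is linearly independent, $T_i$ is linearly independent; a linearly independent, pairwise non-acute set of positive roots generating $W'$ is a simple system for the root subsystem of $W'$ compatible with $\Phi^{+}$, and by \cite[Theorem 3.3]{Dye90} this simple system is unique and equals $\chi(W')$. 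Hence $T_i=\chi(\langle T'\rangle)$, as required.

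The main obstacle is the local depth statement invoked for termination: that the canonical simple reflections of a dihedral reflection subgroup are its two reflections of least length in $W$, and that depth is strictly larger on all remaining roots of the subsystem. This is exactly where the behaviour of $\ell_S$ on dihedral subgroups enters, through the unimodality of depth along the circular order of the subsystem's positive roots, and it is the step requiring genuine input from Dyer's analysis rather than formal bookkeeping.
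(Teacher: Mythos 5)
This lemma is imported verbatim from Dyer (\cite[Proposition 3.7]{Dye90}); the paper gives no proof of it, so your attempt can only be measured against Dyer's original argument. Your overall architecture is the right one and matches his: (i) every $T_i$ stays inside $T$ and generates $\langle T'\rangle$; (ii) a length-type statistic strictly decreases at each genuine replacement, forcing stabilization; (iii) the stable set is identified with $\chi(\langle T'\rangle)$. Step (i) is fine, and in step (ii) you correctly isolate (and honestly flag as unproved) the one piece of real content, namely that $\chi(\langle t,t'\rangle)$ consists of the two reflections of strictly minimal depth in $\langle t,t'\rangle\cap T$; that is precisely Dyer's preparatory lemma on dihedral reflection subgroups, and granting it your bookkeeping (including the case where the replacement shrinks the set) is correct.

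The genuine gap is in step (iii). You identify the stable set with $\chi(\langle T'\rangle)$ by arguing that a set of positive roots with pairwise non-positive $B$-values lying in an open half-space is linearly independent, hence a simple system. That half-space lemma requires $B$ to be positive definite, i.e.\ $W$ finite, whereas the lemma is stated --- and used throughout the paper --- for arbitrary Coxeter systems of finite rank, where $B$ is indefinite. Worse, the conclusion you are trying to reach by this route is simply false in general: a reflection subgroup can have rank strictly larger than $\operatorname{rk}(W)$ (for instance, the four reflections in the sides of an ideal quadrilateral inside a rank-three hyperbolic triangle group form the canonical simple system of a rank-four reflection subgroup), so the roots of $\chi(W')$ need not be linearly independent, and no argument producing linear independence can be correct. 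What is actually needed here is Dyer's local-to-global characterization: a finite set $R\subseteq T$ equals $\chi(\langle R\rangle)$ if and only if $\chi(\langle r,r'\rangle)=\{r,r'\}$ for all distinct $r,r'\in R$. That statement is exactly what makes the terminal set of the algorithm equal to $\chi(\langle T'\rangle)$, and it is a theorem requiring Dyer's cocycle/Bruhat-graph machinery, not a consequence of the Euclidean obtuse-angle lemma. As written, your proof establishes the result only for finite $W$.
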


By \cite[Theorem 3.3 (i)]{Dye90} the set of reflections for $(W',\chi(W'))$ is precisely 
\begin{align}
    \bigcup_{w\in W'}w \chi(W') w^{-1}= W'\cap T.\label{eq:set_of_refl}
\end{align}
In \cite[Lemma 3.2]{Dye90} it is proven that the canonical simple system of a reflection subgroup $W'$ behaves under conjugation with $s\in S$ as follows
\begin{align}
\chi(sW's)=\begin{cases}
s\chi(W')s&,~ s\notin \chi(W')\\
\chi(W')&,~ \text{else}.
\end{cases} \label{eq:a1}
\end{align}
The \defn{Bruhat graph} $\Omega_{(W,S)}$ attached to the Coxeter system $(W,S)$ is a directed graph with vertex set $W$ and there is a directed edge from $x$ to $y$ if there exists $t\in T$ such that $y=xt$ and $\ell_{S}(x)<\ell_{S}(y)$. The full subgraph consisting of vertices $V\subseteq W$ is denoted by $\Omega_{(W,S)}(V)$. Moreover, by \cite[Theorem 1.4]{Dye91} for any reflection subgroup $W'\subseteq W$ and $w\in W$ there are isomorphisms of directed graphs
\begin{align}
\Omega_{(W,S)}(wW')\cong \Omega_{(W,S)}(W') \cong \Omega_{(W',\chi(W'))}. \label{eq:a2}
\end{align}

\medskip
As a consequence of the definition, for each reflection factorization we have an associated path in the Bruhat graph. The first goal of this section is to provide a ``normal form'' for paths in the Bruhat graph attached to certain reflection factorizations (see Proposition \ref{prop:key_prop}). Its proof is based on the following two results.

\begin{lemma}\label{lem:hurw_dihedral}
Let $(W,S)$ be a Coxeter system of finite rank, $t_{1},t_{2}\in T$ with $t_{1}\neq t_{2}$. Then $(r,s)\in \left(T\cap \langle t_{1},t_{2}\rangle\right)^{2}$ and $(t_{1},t_{2})$ lie in the same Hurwitz orbit if and only if $rs=t_{1}t_{2}$.
\end{lemma}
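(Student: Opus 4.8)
The plan is to exploit that $W':=\langle t_1,t_2\rangle$ is a dihedral group and that, on two strands, the braid group $\B_2=\langle\sigma_1\rangle$ is infinite cyclic; hence the entire Hurwitz orbit of $(t_1,t_2)$ is $\{\sigma_1^{k}(t_1,t_2)\mid k\in\ZZ\}$, and it suffices to compute this set explicitly and match it against the admissible pairs. Set $c:=t_1t_2$. The forward implication is immediate: a Hurwitz move preserves the product of the tuple, so $(r,s)\sim(t_1,t_2)$ forces $rs=t_1t_2=c$.

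For the converse the main step is the closed formula
\[
\sigma_1^{k}(t_1,t_2)=(c^{k}t_1,\;c^{k-1}t_1)\qquad(k\in\ZZ),
\]
which I would prove by induction on $k$ (in both directions) using the dihedral relation $t_1ct_1=c^{-1}$, equivalently $c^{j}t_1=t_1c^{-j}$. The base case $k=0$ reads $(t_1,c^{-1}t_1)=(t_1,t_2)$, and for the inductive step one computes the first entry $(c^{k}t_1)(c^{k-1}t_1)(c^{k}t_1)=c^{k+1}t_1$ by moving each $t_1$ past the intervening powers of $c$, while the second entry is $c^{k}t_1$ directly from the definition of $\sigma_1$. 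Thus the Hurwitz orbit of $(t_1,t_2)$ is exactly $\{(c^{k}t_1,c^{k-1}t_1)\mid k\in\ZZ\}$.

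It then remains to identify the admissible pairs. By \eqref{eq:set_of_refl} the set $T\cap W'$ is precisely the set of reflections of the dihedral Coxeter group $W'$, which is the coset $\langle c\rangle t_1=\{c^{k}t_1\mid k\in\ZZ\}$: each $c^{k}t_1$ is an involution conjugate in $W'$ to $t_1$ or $t_2$ (conjugating $t_1$ by powers of $c$ yields the even powers, conjugating $t_2=c^{-1}t_1$ yields the odd powers), so all of them lie in $T$. Given $(r,s)\in(T\cap W')^2$ with $rs=c$, I would then write $r=c^{k}t_1$ and read off $s=r^{-1}c=rc=c^{k}t_1c=c^{k-1}t_1$; hence $(r,s)=(c^{k}t_1,c^{k-1}t_1)=\sigma_1^{k}(t_1,t_2)$, so $(r,s)\sim(t_1,t_2)$.

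The one point needing care — and the only place where Coxeter theory rather than dihedral bookkeeping enters — is the equality $T\cap W'=\{c^{k}t_1\}$: a priori $W'$ could contain an involution that is a nontrivial rotation (the central element $c^{n/2}$ when $W'$ is finite of order $2n$ with $n$ even), and one must rule out that such an element is a reflection of $W$, for otherwise a given $r\in T\cap W'$ would not be forced into the form $c^{k}t_1$. This is exactly what \eqref{eq:set_of_refl}, via Dyer's description of $\chi(W')$, supplies, by identifying $T\cap W'$ with the reflections of $W'$ regarded as a Coxeter group in its own right. Everything else is the direct computation above, and it is uniform in whether $W'$ is finite or infinite dihedral.
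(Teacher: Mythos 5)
Your proof is correct and follows essentially the same route as the paper's: both compute the full $\B_2$-orbit as $\{\sigma_1^{k}(t_1,t_2)\}_{k\in\ZZ}=\{((t_1t_2)^{k}t_1,(t_1t_2)^{k-1}t_1)\}$ and then match this against the reflections of the dihedral reflection subgroup $\langle t_1,t_2\rangle$. Your explicit justification via Dyer's description of $T\cap W'$ (ruling out an involutive rotation being a reflection of $W$) is a point the paper leaves implicit, but it is the same argument.
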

\begin{proof}
A direct computation shows for $m\in \ZZ_{\geq 0}$ that
\begin{align*}
    \sigma_{1}^{m}(t_{1},t_{2})&=\left((t_{1}t_{2})^{m}t_{1},~t_{1}(t_{2}t_{1})^{m-1}\right) \text{ and }\\
     \sigma_{1}^{-m}(t_{1},t_{2})&=\left(t_{2}(t_{1}t_{2})^{m-1},~(t_{2}t_{1})^{m}t_{2}\right).
\end{align*}
Since $r\in T\cap \langle t_{1},t_{2}\rangle$ there exists $m\in \ZZ_{\geq 0}$ with $r\in \lbrace (t_{1}t_{2})^{m}t_{1},~(t_{2}t_{1})^{m}t_{2} \rbrace$. The latter implies that $(r,s)\sim (t_{1},t_{2})$ if and only if $rs=t_{1}t_{2}$.
\end{proof}

\begin{lemma} \label{lem:changing_path}
Let $(W,S)$ be a Coxeter system of finite rank, $w\in W$ and $t_{1},t_{2}\in T$ with $t_{1}\neq t_{2}$ such that 
\[w\longrightarrow wt_{1}\longleftarrow wt_{1}t_{2}\]
in $\Omega_{(W,S)}$. Then there exists $(t'_{1},t'_{2})\in \B_{2}(t_{1},t_{2})$ such that one of the following cases hold:
\begin{enumerate}
    \item[(a)] $w\longrightarrow wt'_{1}\longrightarrow wt'_{1}t'_{2}=wt_{1}t_{2}$
    \item[(b)] $w\longleftarrow wt'_{1}\longleftarrow wt'_{1}t'_{2}=wt_{1}t_{2}$
    \item[(c)] $w\longleftarrow wt'_{1}\longrightarrow wt'_{1}t'_{2}=wt_{1}t_{2}$
\end{enumerate}
In particular, in all three cases we have $\ell_{S}(wt'_{1})<\ell_{S}(wt_{1})$.
\end{lemma}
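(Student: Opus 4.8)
The plan is to reduce the statement to a length computation inside the dihedral reflection subgroup $D=\langle t_1,t_2\rangle$ and to isolate the single ``forbidden'' configuration that the statement is designed to avoid.

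Set $c:=t_1t_2$. By Lemma \ref{lem:hurw_dihedral} the orbit $\B_2(t_1,t_2)$ consists exactly of the pairs $(r,s)\in(T\cap D)^2$ with $rs=c$; in particular, for \emph{any} reflection $r\in T\cap D$ the pair $(r,rc)$ lies in $\B_2(t_1,t_2)$, since $rc$ is again a reflection (a reflection times the rotation $c$) and $r\cdot rc=c$. For such a pair we have $w\,r\,(rc)=wc=wt_1t_2$ automatically, so the two edges of the path $w$--$wr$--$wc$ always exist in $\Omega_{(W,S)}$ (consecutive vertices differ by the reflections $r$ and $rc$), and their \emph{directions} are governed solely by comparing $\ell_S(wr)$ with $\ell_S(w)$ and with $\ell_S(wc)$. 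Of the four possible orientations, the three listed in (a),(b),(c) are precisely those with $\ell_S(wr)<\ell_S(w)$ or $\ell_S(wr)<\ell_S(wc)$, while the excluded fourth one (which is the shape of the hypothesis) is $\ell_S(wr)>\max\{\ell_S(w),\ell_S(wc)\}$. Hence it suffices to produce a reflection $r\in T\cap D$ with $\ell_S(wr)<\max\{\ell_S(w),\ell_S(wc)\}$. Moreover, the hypotheses give $\ell_S(w),\ell_S(wc)<\ell_S(wt_1)$, so any such $r$ satisfies $\ell_S(wr)<\ell_S(wt_1)$, which is exactly the ``in particular'' clause.

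Next I would transfer this to $D$. Let $u$ be the minimal-length element of the coset $wD$ and $v:=u^{-1}w\in D$. Using the additive length formula for cosets of reflection subgroups (equivalently, the Bruhat-graph isomorphism \eqref{eq:a2}, due to Dyer), one has $\ell_S(wd)=\ell_S(u)+\ell_D(vd)$ for all $d\in D$, where $\ell_D$ denotes length in $(D,\chi(D))$. Consequently every length comparison among $w,wr,wc$ becomes the corresponding comparison among $v,vr,vc$ in $D$, and the goal reduces to finding $r\in T\cap D$ with $\ell_D(vr)<\max\{\ell_D(v),\ell_D(vc)\}$. This is now an elementary dihedral computation, split into two cases. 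If $v\neq e$, then $v$ has a right descent $\rho\in\chi(D)$, a simple reflection of $D$ with $\ell_D(v\rho)<\ell_D(v)$; taking $r=\rho$ gives $\ell_D(vr)<\ell_D(v)\le\max$, landing in case (b) or (c). If $v=e$ (that is, $w$ is itself the minimal coset representative), take $r$ to be any simple reflection of $D$, so $\ell_D(r)=1$; since $c=t_1t_2\neq e$ (as $t_1\neq t_2$) is a rotation, $\ell_D(c)$ is even and hence $\ge 2$, so $\ell_D(vr)=1<\ell_D(c)=\max$, landing in case (a).

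The only genuinely technical input is the reduction in the second step: justifying the additive length formula and the existence of a unique minimal-length coset representative for an \emph{arbitrary} reflection subgroup $D$, which is where Dyer's machinery (and the isomorphism \eqref{eq:a2}) is essential. Once the problem has been transported into $D$ the remaining argument is immediate, because in a dihedral Coxeter group ``having no right descent'' forces $v=e$ and every rotation has even length. I would therefore expect the write-up to spend its effort on citing and applying the coset-representative theory cleanly, with the dihedral case analysis occupying only a few lines.
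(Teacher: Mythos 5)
Your strategy is essentially the paper's: both arguments transport the configuration into the rank-two reflection subgroup $D=\langle t_1,t_2\rangle$ via Dyer's isomorphism \eqref{eq:a2}, use Lemma \ref{lem:hurw_dihedral} to identify $\B_2(t_1,t_2)$ with the pairs $(r,rc)$, $r\in T\cap D$, $c=t_1t_2$, and then split into the cases where the transported element is trivial (any element of $\chi(D)$ gives case (a)) or not (a right descent in $(D,\chi(D))$ gives case (b) or (c)). Your repackaging of the goal as ``find $r\in T\cap D$ with $\ell_S(wr)<\max\{\ell_S(w),\ell_S(wc)\}$'' and your derivation of the ``in particular'' clause from $\ell_S(w),\ell_S(wc)<\ell_S(wt_1)$ are both correct and match the paper's concluding length comparison.

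There is, however, a flaw in the step you identify as the technical heart of the argument: the additive length formula $\ell_S(wd)=\ell_S(u)+\ell_D(vd)$, with $u$ the minimal-length element of $wD$ and $\ell_D$ the length in $(D,\chi(D))$, is \emph{false} for general reflection subgroups (it is a feature of standard parabolic subgroups). For instance, in $W=S_3=\langle s_1,s_2\rangle$ the reflection subgroup $D'=\{e,s_1s_2s_1\}$ has $\chi(D')=\{s_1s_2s_1\}$ and minimal coset representative $u=e$ for $eD'$, yet $\ell_S(s_1s_2s_1)=3\neq 0+1$; the same failure occurs for dihedral reflection subgroups generated by non-simple reflections (e.g.\ $\langle s_1s_2s_1,\,s_2s_3s_2\rangle$ in $S_4$). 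What is true, and is all your argument actually uses, is that \eqref{eq:a2} is an isomorphism of \emph{directed} graphs, so for $d\in D$ and $t\in T\cap D$ the sign of $\ell_S(wd)-\ell_S(wdt)$ agrees with the sign of $\ell_D(vd)-\ell_D(vdt)$. Both comparisons you need are of this edge type, namely $wr$ versus $w=wr\cdot r$ and $wr$ versus $wc=wr\cdot(rc)$ with $r,rc\in T\cap D$, so the proof is repaired by replacing the additive formula with this direction-preservation statement --- which is the parenthetical alternative you yourself offer, and is exactly how the paper argues. As written, though, a write-up that ``spends its effort'' justifying the additive formula would be attempting to prove a false statement.
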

\begin{proof}
Let $W'=\langle t_{1},t_{2}\rangle$ and $S'=\chi(W')$, then $(W',S')$ is a Coxeter system of rank two. The isomorphisms (\ref{eq:a2}) map $w\longrightarrow wt_{1}\longleftarrow wt_{1}t_{2}$ in $\Omega_{(W,S)}$ to $x\longrightarrow xt_{1}\longleftarrow xt_{1}t_{2}$ in $\Omega_{(W',S')}$ for some $x\in W'$. If $x=e$, we choose an arbitrary $t'_{1}\in S'$. Since $t_{1}\neq t_{2}$ we get with $t'_{2}=t'_{1}t_{1}t_{2}\in T\cap \langle t_{1},t_{2}\rangle$
\[x\longrightarrow xt'_{1}\longrightarrow xt'_{1}t'_{2}.\]
If $x\neq e$, there exists $t'_{1}\in S'$ such that we have either
\begin{align*}
x\longleftarrow xt'_{1} \longleftarrow xt'_{1}t'_{2} &,~\text{or}\\ 
x\longleftarrow xt'_{1} \longrightarrow xt'_{1}t'_{2} & {}
\end{align*}
for $t'_{2}=t'_{1}t_{1}t_{2}\in T\cap \langle t_{1},t_{2}\rangle$. Hence the isomorphisms (\ref{eq:a2}) yield one of the paths of $\Omega_{(W,S)}(wW')$ described in (a), (b) or (c). Moreover, Lemma \ref{lem:hurw_dihedral} implies that $(t_{1},t_{2})$ and $(t'_{1},t'_{2})$ lie in the same Hurwitz orbit. 

Next we compare the length of $wt'_{1}$ with the length of $wt_{1}$. In case (a) we have
\[\ell_{S}(wt'_{1})<\ell_{S}(wt'_{1}t'_{2})=\ell_{S}(wt_{1}t_{2})<\ell_{S}(wt_{1})\]
while in cases (b) and (c) we have
\[\ell_{S}(wt'_{1})<\ell_{S}(w)<\ell_{S}(wt_{1}),\]
as desired.
\end{proof}

Next we connect the Hurwitz action with the Bruhat graph. It can be interpreted as some kind of normal form for paths attached to reflection factorizations.

\begin{proposition}\label{prop:key_prop}
Let $(W,S)$ be a Coxeter system of finite rank, $x,w\in W$ and $w=t_{1}\cdots t_{m}$ a reflection factorization such that each factorization of $\B_{m}(t_{1},\ldots,t_{m})$ consists of pairwise different factors. Then there exists a factorization $(t'_{1},\ldots,t'_{m})\in \B_{m}(t_{1},\ldots,t_{m})$ such that the corresponding path in the Bruhat graph starting in $x$ and ending in $xw$ is first decreasing and then increasing. More precisely, we have
\[x\longleftarrow xt'_{1}\longleftarrow \ldots \longleftarrow xt'_{1}\cdots t'_{i}\longrightarrow \ldots \longrightarrow xt'_{1}\cdots t'_{m}=xw\]
for a unique $0\leq i \leq m$.
\end{proposition}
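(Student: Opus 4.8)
The plan is to prove this by induction on the number of ascending-then-descending ``direction changes'' in the path, using Lemma \ref{lem:changing_path} as the engine that removes one local maximum at a time while staying within the Hurwitz orbit. The key observation is that a path that is ``first decreasing and then increasing'' is exactly a path with \emph{at most one local minimum and no interior local maximum}; equivalently, reading the edge directions $w \to w t_1' \to \cdots$ as a sequence of symbols in $\{\leftarrow, \rightarrow\}$, we want the pattern $\leftarrow^i \rightarrow^{m-i}$ (a sequence of leftward edges followed by rightward edges). So I would define a suitable complexity measure on paths — for instance, the number of indices $j$ at which the path has a local maximum, i.e.\ where $x t_1'\cdots t_{j-1}' \to x t_1'\cdots t_j' \leftarrow x t_1'\cdots t_{j+1}'$ — and show that as long as this measure is positive, I can apply a Hurwitz move to strictly decrease it.

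\medskip

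\emph{The key step} is the reduction. Suppose the path attached to $(t_1',\dots,t_m')$ is not yet of the desired form. Then somewhere along the path there is a local maximum: an index $i$ with
\[
x t_1'\cdots t_{i-1}' \longrightarrow x t_1'\cdots t_i' \longleftarrow x t_1'\cdots t_{i+1}'.
\]
Setting $u = x t_1'\cdots t_{i-1}'$, this is precisely the configuration $u \to u t_i' \leftarrow u t_i' t_{i+1}'$ to which Lemma \ref{lem:changing_path} applies (with $t_i' \neq t_{i+1}'$ guaranteed by the hypothesis that every factorization in $\B_m(t_1,\dots,t_m)$ has pairwise distinct factors). The lemma produces, via a braid move $\sigma_i^{\pm 1}$ acting on the consecutive pair $(t_i', t_{i+1}')$, a new pair $(t_i'', t_{i+1}'')$ realizing one of the three local patterns (a)--(c), and crucially it guarantees $\ell_S(u t_i'') < \ell_S(u t_i')$, i.e.\ the intermediate vertex has been pushed strictly \emph{downward}. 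I would use this length drop to show the complexity measure decreases: the Hurwitz move is local (it changes only the $i$-th edge's intermediate vertex and the two edges incident to it), so it cannot create new local maxima elsewhere, and the strict length decrease at the perturbed vertex ensures the local maximum at position $i$ is resolved into one of the three ``non-maximal'' configurations.

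\medskip

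\emph{The main obstacle} I anticipate is choosing a complexity measure that provably decreases under this local surgery and that well-orders, so the induction terminates and lands exactly on the pattern $\leftarrow^i\rightarrow^{m-i}$. The subtlety is that cases (b) and (c) of Lemma \ref{lem:changing_path} lower the left endpoint ($\ell_S(u t_i'') < \ell_S(u)$), which could in principle shift the position of a local maximum rather than eliminate it outright, so a naive count of local maxima might not strictly drop. A robust remedy is to use the lexicographically-weighted sum $\sum_{j=0}^{m} \ell_S(x t_1'\cdots t_j')$ of the lengths of all vertices along the path, or better, to argue by induction on $\max_j \ell_S(x t_1'\cdots t_j')$ together with the number of vertices attaining that maximum: Lemma \ref{lem:changing_path} strictly decreases the length of the offending peak vertex without raising any other vertex's length (the move only alters the single intermediate vertex $u t_i'$), so this pair decreases in the well-order $\NN \times \NN$. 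Once no interior vertex is a local maximum, the direction sequence must be $\leftarrow^i \rightarrow^{m-i}$ for a unique $i$, since any switch from $\rightarrow$ back to $\leftarrow$ would create exactly such a forbidden peak; this yields the uniqueness of $i$ and completes the proof.
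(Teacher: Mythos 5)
Your proposal is correct and follows essentially the same route as the paper: repeatedly resolve local maxima of the path via Lemma \ref{lem:changing_path} (the hypothesis on pairwise distinct factors guaranteeing the lemma applies), and use the strict decrease of the total vertex-length sum $\sum_{j}\ell_S(xt'_1\cdots t'_j)$ as the terminating measure — which is precisely the invariant the paper invokes. Your extra care about why a naive count of local maxima might not decrease, and the final observation that the absence of interior peaks forces the pattern $\leftarrow^i\rightarrow^{m-i}$, are correct refinements of the same argument.
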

\begin{proof}
Consider the undirected path in $\Omega_{(W,S)}$ corresponding to the reflection factorization $(t_{1},\ldots,t_{m})$ of $w\in W$
\[x\edge xt_{1}\edge xt_{1}t_{2}\edge \ldots \edge xt_{1}\ldots t_{m}=xw.\]
Since every factorization of $\B_{m}(t_{1},\ldots,t_{m})$ contains pairwise different reflections, Lemma \ref{lem:changing_path} allows us to change parts of the associated directed path of shape $\star \longrightarrow \star \longleftarrow \star$ to 
\[\star \longrightarrow \star \longrightarrow \star,~~ \star \longleftarrow \star \longleftarrow \star~~  \text{ or }~~ \star \longleftarrow \star \longrightarrow \star\]
only using the Hurwitz action. Also by Lemma \ref{lem:changing_path} each replacement reduces the sum of the length of the vertices. Eventually, after finitely many steps we obtain a path that is first decreasing and then increasing.
\end{proof}

In case of reduced reflection factorizations the previous statement yields the following.

\begin{corollary}[{\cite[Proposition 2.2]{BDSW14}}] \label{cor:ordering}
Let $(W,S)$ be a Coxeter system of finite rank, $w\in W$ and $(t_{1},\ldots,t_{m})$ a reduced reflection factorization of $w$, then there exists $(t'_{1},\ldots,t'_{m})\in \B_{m}(t_{1},\ldots,t_{m})$ such that the corresponding path in the Bruhat graph is strictly increasing, that is,
\[e\longrightarrow t'_{1}\longrightarrow t'_{1}t'_{2}\longrightarrow \ldots \longrightarrow t'_{1}t'_{2}\cdots t'_{m}=w.\]
\end{corollary}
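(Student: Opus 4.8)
The plan is to derive this corollary as the special case of Proposition~\ref{prop:key_prop} obtained by taking $x=e$ and exploiting that a reduced reflection factorization automatically satisfies the hypothesis of that proposition. First I would observe that since $(t_{1},\ldots,t_{m})$ is a reduced reflection factorization with $m=\ell_{T}(w)$, every tuple in its Hurwitz orbit $\B_{m}(t_{1},\ldots,t_{m})$ is again a reduced reflection factorization of $w$; this is because the Hurwitz action preserves the product and permutes reflections among reflections, hence cannot decrease the number of factors below $\ell_T(w)=m$. Consequently each such tuple has $m$ factors whose product is $w$ with $\ell_T(w)=m$, and I claim these factors must be pairwise distinct. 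Indeed, if two of the $m$ reflections in some Hurwitz-equivalent tuple coincided, then after a suitable Hurwitz move bringing them adjacent they would cancel (as $t\,t=e$ for $t\in T$), yielding a reflection factorization of $w$ of length $m-2<m$, contradicting $\ell_T(w)=m$. Thus the hypothesis ``each factorization of $\B_{m}(t_{1},\ldots,t_{m})$ consists of pairwise different factors'' is satisfied.

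Next I would apply Proposition~\ref{prop:key_prop} with $x=e$. This produces a tuple $(t'_{1},\ldots,t'_{m})\in\B_{m}(t_{1},\ldots,t_{m})$ together with a unique index $0\leq i\leq m$ such that the associated path in the Bruhat graph reads
\[
e\longleftarrow t'_{1}\longleftarrow\cdots\longleftarrow t'_{1}\cdots t'_{i}\longrightarrow\cdots\longrightarrow t'_{1}\cdots t'_{m}=w,
\]
that is, it is first decreasing and then increasing. The remaining point is to show that the decreasing initial segment is forced to be empty, i.e.\ that $i=0$, so that the path is in fact strictly increasing throughout.

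The key observation is that the identity $e$ is the unique element of minimal $S$-length in $W$, so no edge of the Bruhat graph can be directed \emph{into} $e$. The first edge of the path above is $e\longleftarrow t'_{1}$, meaning $\ell_S(t'_1)<\ell_S(e)=0$, which is impossible unless this decreasing step is vacuous. More precisely, if $i\geq 1$ the path would begin with a genuine directed edge from $t'_1$ to $e$, forcing $\ell_S(e)<\ell_S(t'_1)$; but then the orientation $e\longleftarrow t'_1$ in Proposition~\ref{prop:key_prop} asserts $\ell_S(t'_1\cdots t'_j)$ is decreasing toward $e$, contradicting minimality of $\ell_S(e)=0$. Hence $i=0$, the path has no decreasing part, and it reads
\[
e\longrightarrow t'_{1}\longrightarrow t'_{1}t'_{2}\longrightarrow\cdots\longrightarrow t'_{1}t'_{2}\cdots t'_{m}=w,
\]
as claimed.

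I expect the main (though modest) obstacle to be the verification that the pairwise-distinctness hypothesis of Proposition~\ref{prop:key_prop} genuinely holds for the whole Hurwitz orbit rather than merely for the starting tuple; this requires the cancellation argument tying a repeated factor to a length drop below $\ell_T(w)$, which uses that reflections are involutions together with the invariance of the product under Hurwitz moves. Once that is in place, the reduction to $x=e$ and the minimality of $\ell_S(e)$ make the conclusion immediate.
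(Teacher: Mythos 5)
Your proposal is correct and follows exactly the route the paper intends: the corollary is stated as an immediate consequence of Proposition~\ref{prop:key_prop}, and you supply precisely the two details left implicit there, namely that reducedness forces every tuple in the Hurwitz orbit to have pairwise distinct factors (via the adjacency-and-cancellation argument) and that taking $x=e$ kills the decreasing segment since $\ell_S(e)=0$ is minimal. One small slip: in your final paragraph you write that a directed edge from $t'_1$ to $e$ forces $\ell_S(e)<\ell_S(t'_1)$, whereas the edge convention gives $\ell_S(t'_1)<\ell_S(e)$ (which is what you correctly state a sentence earlier, and which is the inequality that yields the contradiction).
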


\begin{lemma}\label{lem:set_of_refl}
Let $(W,S)$ be a Coxeter system of finite rank, $t,t'\in T$ $(t\neq t')$ and $W'=\langle t_{1},\ldots,t_{m}\rangle $ a reflection subgroup ($t_{i}\in T$, $1\leq i \leq m$). The multiset of conjugacy classes of $\lbrace t,t'\rbrace$ and $\chi(\langle t,t' \rangle)$  under conjugation with elements from $\langle t,t' \rangle$ coincide. The set of reflections for $(W',\chi(W'))$ is $W'\cap T=\bigcup_{w\in W'}w\lbrace t_{1},\ldots,t_{m} \rbrace w^{-1}$.
\end{lemma}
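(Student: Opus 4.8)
Lemma \ref{lem:set_of_refl} has two assertions. First: for two distinct reflections $t,t'$, the multiset of $\langle t,t'\rangle$-conjugacy classes of the two-element set $\{t,t'\}$ equals that of the canonical simple system $\chi(\langle t,t'\rangle)$. Second: for a reflection subgroup $W'=\langle t_1,\dots,t_m\rangle$, the set of reflections $W'\cap T$ equals $\bigcup_{w\in W'} w\{t_1,\dots,t_m\}w^{-1}$.

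**Plan for the first assertion.** The plan is to apply Lemma \ref{lem:hurw_dihedral} directly. Since $\chi(\langle t,t'\rangle)$ is a simple system of size two for the rank-two reflection subgroup $\langle t,t'\rangle$, write $\chi(\langle t,t'\rangle)=\{r,s\}$ with $r,s\in T\cap\langle t,t'\rangle$. By the defining property of the canonical simple system, $rs$ generates the same element as $tt'$ — more precisely I expect $rs=tt'$ (or $rs=(tt')^{-1}=t't$, in which case one swaps the roles and uses the inverse braid). Lemma \ref{lem:hurw_dihedral} then tells me $(r,s)$ and $(t,t')$ lie in the same Hurwitz orbit. But Hurwitz-equivalent tuples share the same multiset of conjugacy classes by invariant (b) of the Remark after the Hurwitz action is defined, and within the dihedral group $\langle t,t'\rangle$ the conjugation is by elements of $\langle t,t'\rangle$. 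Hence the two multisets coincide. The one point to verify carefully is that $rs=tt'$ rather than its inverse; if it is the inverse I invoke Lemma \ref{lem:hurw_dihedral} for the pair $(s,r)$ instead, since $sr=(rs)^{-1}$.

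**Plan for the second assertion.** The inclusion $\bigcup_{w\in W'} w\{t_1,\dots,t_m\}w^{-1}\subseteq W'\cap T$ is immediate: each $t_i\in T$ and each $w\in W'$, so each conjugate lies in $T$ and in $W'$. For the reverse inclusion I would use (\ref{eq:set_of_refl}), which states $W'\cap T=\bigcup_{w\in W'} w\,\chi(W')\,w^{-1}$. Thus it suffices to show that every element of $\chi(W')$ is a $W'$-conjugate of some $t_i$. Here I would invoke Dyer's algorithm, Lemma \ref{le:DyerAlg}: starting from $T'=\{t_1,\dots,t_m\}$ the algorithm produces $\chi(W')$ by iteratively replacing a pair $\{t,t'\}$ with $\chi(\langle t,t'\rangle)$. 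The key is that each such replacement step preserves the property ``every element of the current set $T_i$ is a $W'$-conjugate of some $t_j$.'' This is exactly where the first assertion feeds in: by the first part, the two elements of $\chi(\langle t,t'\rangle)$ are each conjugate, by an element of $\langle t,t'\rangle\subseteq W'$, to $t$ or $t'$, which are by induction already $W'$-conjugates of some $t_j$; composing conjugations keeps us inside $W'$. Since the property holds for $T_0$ trivially and is preserved at each step, it holds for the terminal set $\chi(W')$, giving $\chi(W')\subseteq\bigcup_{w\in W'} w\{t_1,\dots,t_m\}w^{-1}$ and hence the reverse inclusion.

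**Main obstacle.** The genuinely delicate point is the bookkeeping in the inductive step of the second part: when Dyer's algorithm replaces $\{t,t'\}$ inside $T_i$, I must track that the conjugating elements stay in $W'$ and that the ``multiset of conjugacy classes'' statement of the first part is applied with conjugation by $\langle t,t'\rangle$, not some larger group. Everything else is a short deduction from Lemma \ref{lem:hurw_dihedral}, invariant (b) of the Hurwitz action, and the two cited Dyer results (\ref{eq:set_of_refl}) and Lemma \ref{le:DyerAlg}; I do not anticipate any hidden computation beyond confirming the orientation $rs=tt'$ versus $rs=t't$.
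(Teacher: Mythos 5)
Your proof of the \emph{second} assertion is essentially the paper's: run Dyer's algorithm (Lemma \ref{le:DyerAlg}) from $T_0=\lbrace t_1,\ldots,t_m\rbrace$, note that replacing a pair $\lbrace t,t'\rbrace$ by $\chi(\langle t,t'\rangle)$ does not change $\bigcup_{w\in W'}wT_iw^{-1}$ because of the first assertion, and conclude with (\ref{eq:set_of_refl}). That part is fine once the first assertion is in place.

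The gap is in your proof of the \emph{first} assertion, at exactly the point you flagged for verification: writing $\chi(\langle t,t'\rangle)=\lbrace r,s\rbrace$, it is \emph{not} true in general that $rs=tt'$ or $rs=t't$. Both $rs$ and $tt'$ generate the rotation subgroup of the dihedral group $\langle t,t'\rangle=\langle r,s\rangle$, but when that cyclic group is finite of order $m\geq 5$ it has more than two generators, so $tt'$ need not be $(rs)^{\pm1}$. Concretely, take $W=I_2(8)$ with $S=\lbrace r,s\rbrace$, $t=r$ and $t'=(rs)^5r$: then $\langle t,t'\rangle=W$, so $\chi(\langle t,t'\rangle)=\lbrace r,s\rbrace$, while $tt'=(rs)^3\notin\lbrace rs,sr\rbrace$. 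By the \emph{only if} direction of Lemma \ref{lem:hurw_dihedral}, $(r,s)$ and $(t,t')$ are then genuinely not Hurwitz equivalent, so your intermediate claim is false rather than merely unchecked --- and this example has two reflection classes, so the multiset statement is not vacuous there. The paper's proof avoids this by not pairing $r$ with $s$ at all: for each $r_i\in\chi(\langle t,t'\rangle)$ it chooses the partner $\star_i:=r_i\,tt'$, which is again a reflection of $\langle t,t'\rangle$ (hence lies in $T\cap\langle t,t'\rangle$ by (\ref{eq:set_of_refl})) and satisfies $r_i\star_i=tt'$, so that $(r_i,\star_i)\sim(t,t')$ by Lemma \ref{lem:hurw_dihedral}. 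This shows each $[r_i]$ occurs in the multiset $\lbrace[t],[t']\rbrace$; together with the dihedral observation that $r_1\sim r_2$ in $\langle t,t'\rangle$ if and only if there is a single reflection class, which happens if and only if $t\sim t'$ (two reflections from the same class cannot generate the whole dihedral group when there are two classes), this yields the multiset identity. Replacing your single equivalence $(r,s)\sim(t,t')$ by these two equivalences repairs the argument; as written, it does not go through.
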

\begin{proof}
Let $\chi(\langle t,t' \rangle)=\lbrace r_{1},r_{2}\rbrace$. By Lemma \ref{lem:hurw_dihedral} we have that 
\begin{align*}
    (r_{1},\star)\sim (t,t')\sim (r_{2},\star).
\end{align*}
Hence $\lbrace t,t' \rbrace$ and $\chi(\langle t,t' \rangle)$ have the same multisets of conjugacy classes.

To prove the second part of the lemma we use the algorithm described Lemma \ref{le:DyerAlg}. Set $T_{0}=\lbrace t_{1},\ldots,t_{m} \rbrace$. In any step $i\geq 0$ of the algorithm we exchange $\lbrace t,t' \rbrace \subseteq T_{i}$ $(t\neq t')$ by $\chi(\langle t,t' \rangle)$ until we reach $\chi(W')$. Since the multisets of conjugacy classes of $T_{i}$ and $T_{i+1}$ coincide for any $i\in \ZZ_{\geq 0}$ we have
\[\bigcup_{w\in W'} w T_{i} w^{-1}=\bigcup_{w\in W'}w T_{i+1} w^{-1}.\]
After finitely many steps the algorithm yields the canonical simple system $T_{k}=\chi(W')$ for $k \gg 0$. Hence
\[\bigcup_{w\in W'} w\chi(W') w^{-1}=\bigcup_{w\in W'}w \lbrace t_{1},\ldots,t_{m} \rbrace w^{-1}.\]
Now the fact (\ref{eq:set_of_refl}) yields the assumption
\[\bigcup_{w\in W'}w \lbrace t_{1},\ldots,t_{m} \rbrace w^{-1}=T\cap W'.\]
\end{proof}

We close this section with a proposition that investigates the canonical simple system of reflection subgroups of rank two. It will be useful in the investigation of quasi-Coxeter elements in Section \ref{sec:quasi_weyl}.

\begin{proposition}\label{prop:root_in_parab_subgrp}
Let $(W,S)$ be a Coxeter system of finite rank and $P$ a standard parabolic subgroup of $W$. If $t\in P\cap T$ and $t'\in T \setminus P$, then $t\in \chi(<t,t'>)$.
\end{proposition}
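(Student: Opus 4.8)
The plan is to translate the length-based definition of the canonical simple system into a statement about roots and then exploit positivity. Write $W' = \langle t, t'\rangle$; since $t \neq t'$ this is a dihedral reflection subgroup of rank two, so by \cite[Theorem 3.3]{Dye90} its canonical simple system $\chi(W') = \{r_1, r_2\}$ consists of two reflections, say with positive roots $\beta_1, \beta_2 \in \Phi^+$. The defining condition $\ell_S(t t'') > \ell_S(t)$ for all $t'' \in W' \cap T$ with $t'' \neq t$ is, via the standard equivalence $\ell_S(s_\alpha s_\gamma) > \ell_S(s_\alpha) \Leftrightarrow s_\alpha(\gamma) \in \Phi^+$ (for $\gamma \in \Phi^+$), exactly the assertion that the positive root $\alpha_t$ attached to $t$ is a simple root of the compatible positive subsystem $(\Phi')^{+} = \{ \alpha_r \mid r \in W' \cap T \} \cap \Phi^+$. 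Thus it suffices to show that $\alpha_t \in \{\beta_1, \beta_2\}$.

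The key input is that, since $(W', \chi(W'))$ is itself a Coxeter system of rank two, every positive root of $W'$ is a non-negative linear combination of $\beta_1$ and $\beta_2$; in particular $\alpha_t = p\beta_1 + q\beta_2$ with $p, q \geq 0$. I will also use that in the Tits representation every root is a $B$-unit vector: since $B(e_s, e_s) = -\cos(\pi/m_{ss}) = 1$ and $W$ preserves $B$, we have $B(\alpha,\alpha) = 1$ for all $\alpha \in \Phi$. Consequently no positive root is a proper positive multiple of another, so if one of $p, q$ vanishes, then $\alpha_t$ already equals $\beta_1$ or $\beta_2$ and we are done.

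It remains to rule out $p, q > 0$, and this is where the hypotheses on $P$ enter. Writing $P = W_I$, the assumption $t \in P \cap T$ means that $\alpha_t$ lies in the parabolic subsystem $\Phi \cap \spanr(e_s \mid s \in I)$, i.e.\ the coefficient of $e_s$ in $\alpha_t$ vanishes for every $s \in S \setminus I$. If both $p > 0$ and $q > 0$, then for each such $s$ the relation $0 = p\,(\beta_1)_s + q\,(\beta_2)_s$ together with $(\beta_1)_s, (\beta_2)_s \geq 0$ (as $\beta_1, \beta_2$ are positive roots) forces $(\beta_1)_s = (\beta_2)_s = 0$. Hence $\beta_1, \beta_2$ are supported on $I$, so $r_1, r_2 \in W_I = P$ and therefore $W' = \langle \chi(W')\rangle = \langle r_1, r_2\rangle \subseteq P$. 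This contradicts $t' \in W' \setminus P$. Thus $p = 0$ or $q = 0$, which by the previous paragraph gives $\alpha_t \in \{\beta_1,\beta_2\}$, that is, $t \in \chi(\langle t,t'\rangle)$.

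The main obstacle is the first paragraph: making precise that $\chi(W')$ is exactly the set of simple roots of the compatible positive subsystem $(\Phi')^{+} \subseteq \Phi^+$, and that the concrete positive roots $\alpha_r$ ($r \in W' \cap T$) are non-negative combinations of $\beta_1, \beta_2$ in the ambient basis $\Delta$. Both facts are part of Dyer's description of reflection subgroups recalled around \eqref{eq:set_of_refl}; once they are in place, the remainder is elementary positivity bookkeeping together with the uniform root length $B(\alpha,\alpha)=1$.
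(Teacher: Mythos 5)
Your proof is correct, but it follows a genuinely different route from the paper's. The paper argues purely combinatorially: using Lemma \ref{lem:set_of_refl} it writes $t=w^{-1}s'w$ with $s'\in S\cap P$ and $w\in P$ of minimal length, then inducts along a reduced word $w=s_1\cdots s_n$, using the conjugation formula \eqref{eq:a1} at each step (the minimality of $\ell_S(w)$ and the fact that the second canonical generator lies outside $P$ guarantee $s_i\notin\chi(\cdots)$, so conjugation commutes with $\chi$), landing on $t\in\chi(\langle t,t'\rangle)$. You instead work with roots: you identify $\chi(\langle t,t'\rangle)$ with the two extreme rays of the cone generated by the positive roots of the dihedral subgroup, use $B(\alpha,\alpha)=1$ to rule out proper rescalings, and then a clean positivity argument on the coordinates indexed by $S\setminus I$ shows that $\alpha_t$ cannot lie strictly inside that cone without forcing $\langle t,t'\rangle\subseteq P$. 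Your argument is shorter and more transparent, but it imports one fact the paper never recalls, and you correctly flag it as the crux: that Dyer's length-defined $\chi(W')$ for a rank-two reflection subgroup coincides with the simple system of the positive subsystem $\Phi^+_{W'}$ in the geometric sense, so that every root in $\Phi^+_{W'}$ is a non-negative combination of $\beta_1,\beta_2$. This is standard (it is the root-theoretic form of Dyer's description of reflection subgroups, via the equivalence $\ell_S(s_\alpha s_\gamma)>\ell_S(s_\alpha)\Leftrightarrow s_\alpha(\gamma)\in\Phi^+$, and is exactly what makes the algorithm of Lemma \ref{le:DyerAlg} computable), but it needs an explicit citation rather than an appeal to \eqref{eq:set_of_refl}, which only concerns the set of reflections, not the cone structure of the roots. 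The trade-off: the paper's proof uses only machinery it has already set up, while yours buys brevity and a geometric picture at the cost of one additional, though standard, external input.
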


\begin{proof}
Since $P$ is a standard parabolic subgroup Lemma \ref{lem:set_of_refl} implies
\[t\in T\cap P=\bigcup_{w\in P}w(S\cap P)w^{-1}.\]
Let $w\in P$ such that $s':=wtw^{-1}\in S$ and $\ell_{S}(w)$ be minimal among all those $w\in P$. Therefore we get
\[s'\in \chi(<s',wt'w^{-1}>)=\chi(<wtw^{-1},wt'w^{-1}>)=\chi(w<t,t'>w^{-1}).\]
If $w=e$ the previous equation yields the assertion. Thus assume that $\ell_{S}(w)\geq 1$ and set $t'':=wt'w^{-1}$. Let $w=s_{1}\cdots s_{n}$ be a reduced factorization in simple reflections. Because of the minimality of $\ell_{S}(w)$ we have $s_{i}s_{i-1}\cdots s_{1}s's_{1}\cdots s_{i-1}s_{i}\notin S$ for all $1\leq i \leq n$. The latter can be verified as follows. Assume that $s_{i}s_{i-1}\cdots s_{1}s's_{1}\cdots s_{i-1}s_{i}=:s''\in S$ for some $1\leq i\leq n$. Then 
\[s_{n}\cdots s_{i+1}s''s_{i+1}\cdots s_{n}=s_{n}\cdots s_{i+1}s_{i}\cdots s_{1}s's_{1}\cdots s_{i}s_{i+1}\cdots s_{n}=w^{-1}s'w=t\]
and thus 
\[s_{i+1}\cdots s_{n}ts_{n}\cdots s_{i+1}=s''\in S.\]
The latter contradicts the minimality of $\ell_{S}(w)$.

In the following we will show by induction that
\[s_{i}\cdots s_{1}\chi(\langle s',t'' \rangle)s_{1}\cdots s_{i}=\chi(s_{i}\cdots s_{1}\langle s',t''\rangle s_{1}\cdots s_{i})\]
for all $1\leq i \leq n$.  Consider the situation for $i=1$. Since $s'\in S\cap P$ and $t''\notin P$ we have $\chi(\langle s',t''\rangle)=\lbrace s',r\rbrace$ for some reflection $r\notin P$. Since $s_{1}s's_{1}\notin S$ and $r\notin P$ we have $s_{1}\notin \lbrace s',r \rbrace=\chi(\langle s',t'' \rangle)$. Hence by the equation (\ref{eq:a1}) we get $s_{1}\chi(\langle s',t'' \rangle)s_{1}=\chi(s_{1}\langle s',t''\rangle s_{1})$. Assume that $i\geq 2$. By induction hypothesis it holds
\begin{align*}
\chi(s_{i-1}\cdots s_{1}<s',t''>s_{1} \cdots s_{i-1})&=s_{i-1}\cdots s_{1}\chi(<s',t''>)s_{1} \cdots s_{i-1}\\
&=s_{i-1}\cdots s_{1}\lbrace s',r \rbrace s_{1} \cdots s_{i-1}.
\end{align*}
As before we have that $s_{i}\notin  s_{i-1}\cdots s_{1}\lbrace s',r \rbrace s_{1} \cdots s_{i-1} = \chi(s_{i-1}\cdots s_{1} \langle s',t''\rangle s_{1} \cdots s_{i-1})$. Thus equation (\ref{eq:a1}) implies
\[s_{i}\chi(s_{i-1}\cdots s_{1} \langle s',t''\rangle s_{1} \cdots s_{i-1})s_{i}=\chi(s_{i}\cdots s_{1} \langle s',t''\rangle s_{1} \cdots s_{i}).\]
The latter yields by the induction hypothesis that
\[\chi(s_{i}\cdots s_{1} \langle s',t''\rangle s_{1} \cdots s_{i})=s_{i}\cdots s_{1}\chi(\langle s',t'' \rangle)s_{1}\cdots s_{i}.\] Altogether, we have with $s'\in \chi(\langle s',t''\rangle)$
\[s_{i}\cdots s_{1}s's_{1}\cdots s_{i} \in \chi(s_{i}\cdots s_{1}<s',t''>s_{1} \cdots s_{i})\]
for all $1\leq i \leq n$. In particular, for $i=n$ we have
\[t=w^{-1}s'w\in \chi(w^{-1}<s',t''>w)=\chi(<t,t'>).\]
\end{proof}

\section{Reflection factorizations in finite Coxeter groups}

The aim of this section is to investigate arbitrary reflection factorizations in finite Coxeter groups, and to provide a method to reduce these factorizations. We give a uniform proof of Theorem \ref{thm:main1}. Moreover, we state in Proposition \ref{prop:alt_main1} a version of this theorem in a more general setup.

We start with the following well-known lemma.

\begin{lemma}\label{lem:rk_par_cl}
Let $(W,S)$ be a finite Coxeter system and $w=t_{1}\cdots t_{m}\in W$ a reduced reflection factorization of $w$. Then the group $W'=\langle t_{1},\ldots,t_{m}\rangle$ is a Coxeter group of rank $m$ and its rank coincides with the rank of the parabolic closure $P(t_{1},\ldots,t_{m})$ of $\lbrace t_{1},\ldots,t_{m}\rbrace$. 
\end{lemma}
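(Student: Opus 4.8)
The plan is to prove the two assertions separately: first that $W' = \langle t_1, \ldots, t_m \rangle$ is a Coxeter group of rank $m$, and then that its rank agrees with that of $P(t_1, \ldots, t_m)$. Throughout I would work geometrically, using that the bilinear form $B$ is positive definite because $(W,S)$ is finite, so that $V = U \oplus U^\perp$ and $U^{\perp\perp} = U$ for every subspace $U$.

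First I would write $t_i = s_{\alpha_i}$ with $\alpha_i \in \Phi^+$ and invoke Carter's Lemma~\ref{lem:Carter}: since $t_1 \cdots t_m$ is reduced, the roots $\alpha_1, \ldots, \alpha_m$ are linearly independent, so $U := \Span(\alpha_1, \ldots, \alpha_m)$ has dimension $m$. The subgroup $W'$ is generated by the reflections it contains, hence is a reflection subgroup and, by Dyer's theorem, a Coxeter group with canonical simple system $\chi(W')$; its rank is $|\chi(W')|$. The key geometric observation is that every generator $s_{\alpha_i}$ fixes $U^\perp$ pointwise (as $\alpha_i \in U$), so every element of $W'$ does, and in fact $\Fix(W') = U^\perp$ because $s_{\alpha_i}v = v$ forces $B(\alpha_i, v)=0$. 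Consequently, if $s_\beta \in W' \cap T$ then $U^\perp \subseteq \beta^\perp$, whence $\beta \in U^{\perp\perp} = U$ by nondegeneracy. Thus the roots of the root subsystem $\Phi' = \{\beta \in \Phi : s_\beta \in W'\}$ all lie in $U$ and include $\alpha_1, \ldots, \alpha_m$, so they span $U$. Since the roots underlying $\chi(W')$ form a simple system for $\Phi'$, in particular a basis of $\Span(\Phi') = U$, I conclude $|\chi(W')| = \dim U = m$.

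For the second assertion, set $P = P(t_1, \ldots, t_m)$ and recall $W' \subseteq P$. By Steinberg's fixed-point theorem the pointwise stabilizer $Q := \Stab_W(U^\perp)$ is a parabolic subgroup, and it contains $W'$ since $W'$ fixes $U^\perp = \Fix(W')$ pointwise; as $P$ is the smallest parabolic subgroup containing $W'$, this yields $P \subseteq Q$. The chain $W' \subseteq P \subseteq Q$ reverses to $\Fix(Q) \subseteq \Fix(P) \subseteq \Fix(W') = U^\perp$, while $U^\perp \subseteq \Fix(Q)$ by definition of $Q$; hence all three fixed spaces equal $U^\perp$. Since for a parabolic (indeed any reflection) subgroup the rank equals the codimension of its fixed space, I get $\rk(P) = \dim V - \dim U^\perp = m$, matching $\rk(W') = m$.

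The main obstacle is the bridge between the abstract Coxeter rank of $W'$ (the cardinality of $\chi(W')$) and the geometric quantity $\dim U$: this rests on the facts that a simple system of a finite root system is a basis of its span and that $W'$ acts essentially on $U$, both of which use positive definiteness of $B$. The analogous translation for $P$ needs Steinberg's theorem to realize the relevant parabolic subgroup as a pointwise stabilizer; once these standard ingredients are in place, the identification of the fixed spaces and the rank computation are routine.
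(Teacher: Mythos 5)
Your argument is correct, but it follows a genuinely different route from the paper's. The paper's proof of Lemma~\ref{lem:rk_par_cl} is a three-line concatenation of citations: Dyer's result that a reflection subgroup generated by $m$ reflections is a Coxeter group of rank at most $m$ gives the upper bound, Carter's Lemma~\ref{lem:Carter} gives the lower bound $m$, and the equality of the rank of $W'$ with the rank of its parabolic closure is quoted wholesale from Digne--Petit--Reiner. You replace the second and third ingredients with a self-contained geometric argument exploiting positive definiteness of $B$: you identify $\Fix(W')=U^{\perp}$ for $U=\Span(\alpha_1,\dots,\alpha_m)$, show that every root of $W'$ lies in $U$ so that $|\chi(W')|=\dim U=m$ because a simple system of the finite root subsystem is a basis of its span, and then sandwich $P(t_1,\dots,t_m)$ between $W'$ and the pointwise stabilizer of $U^{\perp}$ (parabolic by Steinberg's theorem), concluding via the fact that the rank of a parabolic subgroup equals the codimension of its fixed space. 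All the steps check out. What your approach buys is transparency -- every assertion is reduced to classical facts about finite reflection groups rather than to two external lemmas; what it costs is that it is intrinsically tied to the positive definite case, whereas the quoted comparison of $\rk W'$ with the rank of the parabolic closure holds for general Coxeter groups, a level of generality the paper relies on elsewhere. Both proofs invoke Carter's lemma at the same point.
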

\begin{proof}
By \cite[Corollary 3.11]{Dye90} the group $W'$ is a Coxeter group of rank at most $m$. Since the factorization $t_{1}\cdots t_{m}$ is reduced, Carter's lemma \ref{lem:Carter} implies that the rank is at least $m$. By \cite[Lemma 2.1]{DPR13} the rank of the parabolic closure of $\lbrace t_{1},\ldots,t_{m}\rbrace$ coincides with the rank of $W'$.
\end{proof}

We are now in the position to prove Theorem \ref{thm:main1}. This proof describes a procedure that allows to simplify reflection factorizations in finite Coxeter groups. It was first proven by Lewis and Reiner \cite[Corollary 1.4]{LR16} by a case-based analysis of the finite irreducible Coxeter groups. We provide a uniform proof here.

\begin{proof}[Proof of Theorem \ref{thm:main1}]
We proceed by induction on $k$. If $k=0$ there is nothing to prove. Assume that $k>0$ and let $1\leq l \leq m+2k-1$ maximal such that $w':=t_{1}\cdots t_{l}$ is a reduced reflection factorization. Since $\ell_{T}(w' t_{l+1})=l-1$ there exists a factorization $w'=x^{-1}t_{l+1}$ with $x^{-1}\in W$ and $\ell_{T}(w')=\ell_{T}(x^{-1})+1$. By Lemma \ref{lem:rk_par_cl} the rank of the reflection subgroup $W'=\langle t'_{1},\ldots,t'_{l-1}\rangle$ is $l-1$ and coincides with the rank of $P(x^{-1})=P(x)$. Without loss of generality, we can assume that $P(x)$ is a standard parabolic subgroup of $W$.

Consider the (non-directed) path in the Bruhat graph starting in $x$ and ending in $e$ corresponding to the non-reduced factorization $(t_{1},\ldots,t_{l+1})$, that is, the path
\[x \edge xt_{1} \edge xt_{1}t_{2} \edge \ldots  \edge xt_{1}t_{2}\cdots t_{l}=t_{l+1} \edge e.\]
If there exists a factorization in $\B_{l+1}(t_{1},\ldots,t_{l+1})$ with two identical factors, then we can shift them to the end of the factorization by using the Hurwitz action and apply the induction hypothesis. Hence let us assume to the contrary that each factorization in $\B_{l+1}(t_{1},\ldots,t_{l+1})$ consists of pairwise different factors. Then Proposition \ref{prop:key_prop} yields the existence of a braid $\sigma\in \B_{l+1}$ such that the factorization $\sigma(t_{1},\ldots,t_{l+1})=(\overline{t}_{1},\ldots,\overline{t}_{l+1})$ induces the following directed path in the Bruhat graph
\[x\longleftarrow x\overline{t}_{1} \longleftarrow x\overline{t}_{1}\overline{t}_{2}\longleftarrow \ldots \ \longleftarrow x\overline{t}_{1}\cdots \overline{t}_{l}=\overline{t}_{l+1}\longleftarrow e.\]
The strong exchange condition yields that $\overline{t}_{1},\ldots,\overline{t}_{l+1}\in P(x)$ and therefore also $t_{1},\ldots,t_{l+1}\in P(x)$. In particular, $w'\in P(x)$ and
\[l=\ell_{T}(w')\leq \ell_{T\cap P(x)}(w').\]
But by Carter's Lemma \ref{lem:Carter} the length $\ell_{T\cap P(x)}$ is bounded by the rank of $P(x)$, that is, by $l-1$. Hence we arrive at a contradiction and there exists a braid $\sigma\in \B_{m+2k}$ such that 
\[\sigma (t_{1},\ldots,t_{m+2k})=(t'_{1},\ldots, t'_{m+2(k-1)},r_{i_{k}},r_{i_{k}}).\]
The induction hypothesis yields the assertion.
\end{proof}

\begin{remark}
The braid $\sigma$ in Theorem \ref{thm:main1} can be calculated explicitely. As in the proof of Theorem \ref{thm:main1} described, we have to transform the path
\[x \edge xt_{1} \edge xt_{1}t_{2} \edge \ldots  \edge xt_{1}t_{2}\cdots t_{l}=t_{l+1} \edge e\]
into a directed path. This can be done succesively by using Lemma \ref{lem:changing_path} and its proof.
\end{remark}

\begin{remark}
The uniform  proof of Theorem \ref{thm:main1} also yields a uniform proof of a result by Lewis--McCammond--Petersen--Schwer \cite[Theorem B]{LMPS19} about translation-elliptic factorizations in affine Coxeter groups.
\end{remark}

The following calculation shows that Theorem \ref{thm:main1} does not hold for arbitrary Coxeter groups.

\begin{example}
We use the notation of Humphreys' textbook on Coxeter groups \cite[Chapter 4]{Hum90}. Consider the affine Coxeter group of type $\widetilde{B}_{2}$. Further consider the roots $\alpha_{1}=e_{1}-e_{2},\widetilde{\alpha}=e_{1}+e_{2},\alpha_{2}=e_{1}$ of the finite root system $B_{2}\subseteq \RR^{2}$, where $e_{1},e_{2}$ are the canonical unit vectors. We have
\[s_{\alpha_1, 1} s_{\alpha_1} s_{\widetilde{\alpha}, 1} s_{\widetilde{\alpha}}=s_{\alpha_{2},1}s_{\alpha_{2}}.\]
Since $\alpha_1$ is orthogonal to $\widetilde{\alpha}$, every factorization of 
$\B_{4}(s_{\alpha_1, 1},s_{\alpha_1},s_{\widetilde{\alpha}, 1},s_{\widetilde{\alpha}})$ consists of pairwise different factors. 
\end{example}

The following result is a modification of Theorem \ref{thm:main1} that holds for arbitrary Coxeter systems of finite rank. Its short proof is based on Proposition \ref{prop:key_prop} and can be found in \cite{WY19}.

\begin{proposition}[{\cite[Lemma 2.3]{WY19}}] \label{prop:alt_main1}
Let $(W,S)$ be a Coxeter system of finite rank, $w=t_{1}\cdots t_{m+2k}\in W$ a reflection factorization, $k\in \ZZ_{\geq 0}$ and $\ell_{S}(w)=m$. Then there exists a braid $\sigma \in \B_{m+2k}$ such that
\[\sigma (t_{1},\ldots,t_{m+2k})=(r_{1},\ldots,r_{m},r_{i_{1}},r_{i_{1}},\ldots,,r_{i_{k}},r_{i_{k}}).\]
\end{proposition}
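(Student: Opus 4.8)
The plan is to mirror the proof of Theorem~\ref{thm:main1}, but with the reflection length $\ell_T$ replaced throughout by the Coxeter length $\ell_S$ and with the basepoint $x = e$ in the Bruhat graph. I would induct on $k$. The base case $k = 0$ is immediate: the tuple already has the required shape, so one may take $\sigma = \idop$ and $r_i = t_i$.

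For the inductive step $k > 0$, the goal is to exhibit, somewhere in the Hurwitz orbit $\B_{m+2k}(t_1,\ldots,t_{m+2k})$, a factorization containing two identical factors; once such a factorization is found, the pair can be pushed to the end and stripped off to invoke the induction hypothesis. To locate a repeated factor I would argue by contradiction: suppose every factorization in the orbit consists of pairwise distinct reflections. Then Proposition~\ref{prop:key_prop}, applied with basepoint $x = e$, produces a factorization $(t'_1,\ldots,t'_{m+2k})$ whose associated path in $\Omega_{(W,S)}$ is first decreasing and then increasing, with some unique turning index $i$. Since $\ell_S(e) = 0$ is the minimum possible value, no decreasing step can occur, forcing $i = 0$; hence the entire path
\[e \longrightarrow t'_1 \longrightarrow \ldots \longrightarrow t'_1 \cdots t'_{m+2k} = w\]
is strictly increasing. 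As every directed edge of the Bruhat graph raises $\ell_S$ by at least one, this yields $\ell_S(w) \geq m + 2k$, contradicting $\ell_S(w) = m$ together with $k > 0$.

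Having secured a factorization in the orbit with two equal entries, I would first bring the two equal reflections into adjacent positions by transporting the right one leftward via inverse generators, using that $\sigma_j^{-1}(\ldots,x,y,\ldots) = (\ldots,y,yxy,\ldots)$ carries the factor $y$ one step left unchanged; this produces a block $(\ldots,t,t,\ldots)$. I would then slide this block to the far right, using the computation $\sigma_j\sigma_{j+1}(\ldots,t,t,z,\ldots) = (\ldots,z,t,t,\ldots)$, which lets the pair $(t,t)$ hop past each successive factor while leaving that factor unchanged and the pair intact. After finitely many such moves the tuple becomes $(r_1,\ldots,r_{m+2(k-1)},r_{i_k},r_{i_k})$. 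Deleting the terminal pair leaves a reflection factorization of the \emph{same} element $w$ of length $m+2(k-1)$ (the pair multiplies to $1$), still satisfying $\ell_S(w) = m$; applying the induction hypothesis and re-embedding the resulting braid into $\B_{m+2k}$ (acting trivially on the last two strands) finishes the step.

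The substantive point is the contradiction via Proposition~\ref{prop:key_prop}: the key new input relative to Theorem~\ref{thm:main1} is that choosing $x = e$ forces the ``first decreasing'' phase to be vacuous, so the normal form becomes a strictly increasing Bruhat path whose edge count is bounded above by $\ell_S(w)$. The remaining manipulations, bringing the two equal reflections together and shifting the pair to the end, are routine Hurwitz-move bookkeeping and should present no real difficulty.
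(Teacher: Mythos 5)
Your proof is correct and follows exactly the route the paper indicates for this proposition (it only cites \cite[Lemma 2.3]{WY19} and notes the proof is a short consequence of Proposition~\ref{prop:key_prop}): you mirror the inductive scheme of Theorem~\ref{thm:main1}, and the key simplification — that with basepoint $x=e$ the decreasing phase of the normal-form path is vacuous, so a strictly increasing Bruhat path of $m+2k$ edges would force $\ell_S(w)\geq m+2k$ — is precisely what replaces the parabolic-closure/Carter's-lemma argument used there. The Hurwitz bookkeeping for collecting and shifting the repeated pair is also carried out correctly.
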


\section{Extension of reduced reflection factorizations in arbitrary Coxeter groups}

In this section we show how to extend reduced reflection factorizations in arbitrary Coxeter systems of finite rank such that they all lie in the same Hurwitz orbit. As a consequence we get that all reduced reflection factorizations are subwords of a factorization that lies in the Hurwitz orbit of a reduced factorization in simple reflections.

\begin{definition}
Let $(W,S)$ be a Coxeter system with set of reflections $T$. For $w \in W$ we define the set $N(w):= \{t \in T \mid \ell(wt)< \ell(w) \}$.
\end{definition}

\begin{lemma}\label{lem:fac_in_N}
Let $(W,S)$ be a Coxeter system of finite rank, $w\in W$ and $(t_{1},\ldots,t_{n})$ be a reduced reflection factorization of $w$. Then there exists $(r_{1},\ldots,r_{n})\in \B_{n}(t_{1},\ldots,t_{n})$ such that $r_{i}\in N(w)$ for all $1\leq i \leq n$.
\end{lemma}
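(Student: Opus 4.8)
The plan is to induct on the Coxeter length $\ell(w)$, using Corollary \ref{cor:ordering} to pin down the last factor and the inversion set $N(w)$ to feed the induction.

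First I would invoke Corollary \ref{cor:ordering} to replace $(t_1,\ldots,t_n)$ by a Hurwitz-equivalent factorization whose associated Bruhat path $e\to t_1\to\cdots\to t_1\cdots t_n=w$ is strictly increasing. Writing $v_i=t_1\cdots t_i$, the penultimate vertex is $v_{n-1}=wt_n$ with $\ell(v_{n-1})<\ell(v_n)=\ell(w)$, so $\ell(wt_n)<\ell(w)$, i.e.\ $t_n\in N(w)$. Thus the last factor is automatically of the desired type, and $(t_1,\ldots,t_{n-1})$ is a reduced reflection factorization of $u:=wt_n$ with $\ell(u)<\ell(w)$. Applying the induction hypothesis to $u$ (through $\B_{n-1}$ acting on the first $n-1$ strands) produces a Hurwitz-equivalent factorization $(r_1,\ldots,r_{n-1},t_n)$ with $r_i\in N(u)$ for $1\le i\le n-1$ and $t_n\in N(w)$.

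It therefore suffices to guarantee $N(u)\subseteq N(w)$. The key computation is that for a simple reflection $s$ with $sw<w$ one has
\[ N(sw)=N(w)\setminus\{w^{-1}sw\}. \]
Indeed, a reflection $s_\beta$ with $\beta\in\Phi^+$ lies in $N(sw)$ iff $sw\beta<0$; since $s$ permutes $\Phi^+\setminus\{e_s\}$, this happens exactly when $\beta\in N(w)$ and $\beta\neq -w^{-1}e_s$, the excluded root having reflection $s_{w^{-1}e_s}=w^{-1}sw$. In particular $N(sw)\subseteq N(w)$, so if the increasing path can be arranged to end with the \emph{left weak order cover} $sw\to w$ — equivalently, if the peeled factor $t_n$ can be taken to be $w^{-1}sw$ for a simple left descent $s$, so that $u=sw$ — then $N(u)=N(sw)\subseteq N(w)$ and the induction closes.

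The hard part is exactly this last point, and it is where the two length functions interact. Arranging $u=wt_n=sw$ forces $\ell_T(sw)=n-1$, i.e.\ $w^{-1}sw\leq_T w$; but reflection length and Coxeter length may drop independently, and it can happen that $\ell_T(sw)=n+1$ for \emph{every} simple descent $s$ (for instance $w=(13)(24)$ in $S_4$, whose only descent is $s_2$). In that case $w^{-1}sw$ lies in no reduced reflection factorization of $w$, so the last factor of an increasing path can never be made equal to it; one must instead prepend $s$ to obtain a \emph{reduced} factorization $(s,t_1,\ldots,t_n)$ of $sw$ (now of reflection length $n+1$), run the induction there to force all factors into $N(sw)\subseteq N(w)$, and then reabsorb $s$ by a Hurwitz rotation to recover a length-$n$ factorization of $w$. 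Carrying this out so that the reflection we need to delete is available after the rotation — and, crucially, realizing every such passage by braids acting on the \emph{given} tuple, without appealing to transitivity of the Hurwitz action, which fails in general — is the delicate step, and reconciling the two cases $\ell_T(sw)=n\mp1$ is what I expect to be the main obstacle.
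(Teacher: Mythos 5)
Your reduction to the containment $N(u)\subseteq N(w)$, where $u=wt_n$, is exactly where the argument breaks, and the break is not repaired. The identity $N(sw)=N(w)\setminus\{w^{-1}sw\}$ only helps if the peeled factor can be taken to be $w^{-1}sw$ for a simple left descent $s$, which forces $\ell_T(sw)=n-1$; as your own example $w=(13)(24)$ in $S_4$ shows, this can fail for \emph{every} descent, so the inductive step has no valid input. Moreover, even when some descent does satisfy $\ell_T(sw)=n-1$, you would still have to produce $w^{-1}sw$ as the last entry of a tuple in the Hurwitz orbit of the \emph{given} factorization, and since the Hurwitz action on $\Red_T(w)$ need not be transitive, Corollary \ref{cor:ordering} only hands you \emph{some} last factor in $N(w)$, not one of your choosing. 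The proposed patch for the bad case --- prepend $s$ to get a reduced length-$(n+1)$ factorization of $sw$, induct there, then ``reabsorb $s$ by a Hurwitz rotation'' --- is not carried out and does not obviously work: every tuple in $\B_{n+1}(s,t_1,\ldots,t_n)$ has product $sw$, deleting an entry is not a Hurwitz move, and there is no argument that whatever length-$n$ tuple you extract lies in the orbit of $(t_1,\ldots,t_n)$. So the proposal identifies the right difficulty but leaves a genuine gap.

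For comparison, the paper sidesteps the induction on $\ell(w)$ and never needs to control $\ell_T(sw)$. It fixes one reduced word $w=s_1\cdots s_m$, so that $N(w)=\{s_m\cdots s_j\cdots s_m\mid 1\leq j\leq m\}$, applies Corollary \ref{cor:ordering} to get an increasing tuple $(r_1,\ldots,r_n)$, and uses the strong exchange condition to record each step $wr_n\cdots r_{i+1}\to wr_n\cdots r_i$ as the deletion of one letter $s_{i_j}$ from this fixed word. The one additional observation is that an adjacent inverse Hurwitz move $\sigma_{j}^{-1}$ interchanges the order in which two consecutive letters are deleted while staying in the same orbit; sorting so that the deletion positions satisfy $i_n<i_{n-1}<\cdots<i_1$ makes each factor literally the palindrome $s_m\cdots s_{i_k}\cdots s_m\in N(w)$. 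That sorting argument, entirely inside the given Hurwitz orbit and relative to a single reduced word, is the missing idea in your approach.
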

\begin{proof}
Let $w=s_{1}\cdots s_{m}$ be a reduced factorization in simple reflections. By \cite[Proposition 5.6]{Hum90} we have
\[N(w)=\lbrace s_{m},s_{m}s_{m-1}s_{m},\ldots,s_{m}\cdots s_{1} \cdots s_{m} \rbrace,\]
which is independent of the initial reduced factorization. Corollary \ref{cor:ordering} implies the existence of a factorization $(r_{1},\ldots,r_{n})\in \B_{n}(t_{1},\ldots,t_{n})$ such that $\ell(wr_{n}\cdots r_{i})<\ell(wr_{n}\cdots r_{i+1})$ for $1 \leq i \leq n$. Thus the strong exchange condition yields $1\leq i_{n-1} \neq i_{n}\leq m$ such that $wr_{n}=s_{1}\cdots \widehat{s_{i_{n}}} \cdots s_{m}$ and 
$$
wr_{n}r_{n-1}=s_{1}\cdots \widehat{s_{i_{n}}} \cdots \widehat{s_{i_{n-1}}} \cdots s_{m} \quad \text{or} \quad wr_{n}r_{n-1}=s_{1}\cdots \widehat{s_{i_{n-1}}} \cdots \widehat{s_{i_n}} \cdots s_{m}.
$$
If we have $i_{n}<i_{n-1}$, that is, $wr_{n}r_{n-1}=s_{1}\cdots \widehat{s_{i_{n}}} \cdots \widehat{s_{i_{n-1}}} \cdots s_{m}$, then $r_{n-1},r_{n}\in N(w)$. Otherwise apply the Hurwitz move induced by $\sigma_{n-1}^{-1}\in \B_{n}$ to $(r_{1},\ldots,r_{n})$ and obtain
\[\sigma_{n-1}^{-1}(r_{1},\ldots,r_{n})=(r_{1},\ldots,r_{n-2},r_{n},r_{n}r_{n-1}r_{n}).\]
Since $wr_{n}=s_{1}\cdots \widehat{s_{i_{n}}}\cdots s_{m}$, $wr_{n}r_{n-1}r_{n}=s_{1}\cdots \widehat{s_{i_{n-1}}}\cdots s_{m}$ and $i_{n-1}<i_{n}$ we get $r_{n},r_{n}r_{n-1}r_{n}\in N(w)$.

Now we proceed in this way for all the neighbors of the resulting factorization until we obtain (after finitely many steps) a factorization whose reflections are in $N(w)$.
\end{proof}

The proof of the previous lemma shows that for a given reduced factorization $w=s_{1}\cdots s_{m}$ of $w$ in simple reflections, we find $(r_{1},\ldots,r_{n})\in \B_{n}(t_{1},\ldots,t_{n})\subseteq \Red_{T}(w)$ such that 
\[wr_{n}\cdots r_{k}=s_{1}\cdots \widehat{s_{i_{n}}}\cdots \widehat{s_{i_{k}}} \cdots s_{n}\]
for $1\leq k \leq n$ and $1\leq i_{n}<i_{n-1}<\ldots<i_{1}\leq n$. In particular, we get $r_{k}=s_{n}\cdots s_{i_{k}}\cdots s_{n}$ for $1\leq k \leq n$.

\begin{proof}[Proof of Theorem \ref{thm:main1.1}]
By the previous mentioned we can assume that 
\[t_{k}=s_{m}\cdots s_{i_{k}}\cdots s_{m}\] for $1\leq k \leq n$ and $1\leq i_{n}<i_{n-1}<\ldots<i_{1}\leq n$.

Let $(q_{1},\ldots,q_{m-n})=(s_{1},\ldots,\widehat{s_{i_{n}}},\ldots,\widehat{s_{i_{1}}},\ldots,s_{m})$, that is, the $(m-n)$-tuple that is obtained by deleting the entries of $(s_{1},\ldots,s_{m})$ with indices $i_{1},\ldots,i_{n}$. Since $t_{k}=s_{m}\cdots s_{i_{k}}\cdots s_{m}$ we obtain
\[(q_{1},\ldots,q_{m-n},t_{1},\ldots,t_{n})=(s_{1},\ldots,\widehat{s_{i_{n}}},\ldots,\widehat{s_{i_{1}}},\ldots,s_{m},s_{m}\cdots s_{i_{1}}\cdots s_{m},\ldots, s_{m}\cdots s_{i_{n}}\cdots s_{m}).\]
In the following we show by induction that $(q_1,\ldots,q_{m-n},t_1,\ldots,tn)$ and $(s_1,\ldots,s_m)$ lie in the same Huritz orbit. If $n=1$ we have
\[(q_{1},\ldots,q_{m-1},t_{1})=(s_{1},\ldots,\widehat{s_{i_{1}}},\ldots,s_{m},s_{m}\cdots s_{i_{1}}\cdots s_{m}).\]
A direct calculation shows
\[\sigma_{i_{1}}\cdots \sigma_{m-1}(s_{1},\ldots,\widehat{s_{i_{1}}},\ldots,s_{m},s_{m}\cdots s_{i_{1}}\cdots s_{m})=(s_{1},\ldots,s_{m}).\]
Now assume that $n>1$. Similar to the case $n=1$ we have
\begin{align*}
&\sigma_{i_{1}}\cdots \sigma_{m-1}(q_{1},\ldots,q_{m-n},t_{1},\ldots,t_{n})\\
=&\sigma_{i_{1}}\cdots \sigma_{m-1}(s_{1},\ldots,\widehat{s_{i_{n}}},\ldots,\widehat{s_{i_{1}}},\ldots,s_{m},s_{m}\cdots s_{i_{1}}\cdots s_{m},\ldots, s_{m}\cdots s_{i_{n}}\cdots s_{m})\\
=&(s_{1},\ldots,\widehat{s_{i_{n}}},\ldots,\widehat{s_{i_{2}}},\ldots,s_{m},s_{m}\cdots s_{i_{2}}\cdots s_{m},\ldots, s_{m}\cdots s_{i_{n}}\cdots s_{m})
\end{align*}
The induction hypothesis yields the assumption.
\end{proof}

\begin{corollary}
Let $(W,S)$ be a Coxeter system of finite rank and $w=s_{1}\ldots s_{m}$ reduced factorization of $w\in W$. Every reduced reflection factorization of $w$ is a prefix of an element of $\B_{m}(s_{1},\ldots,s_{m})$. Moreover, given two reduced reflection factorizations $(r_{1},\ldots,r_{n}),(t_{1},\ldots,t_{n})$ of $w$ then there exist $q_{1},\ldots,q_{m-n},p_{1},\ldots,p_{m-n}\in S$ with 
\[q_{1}\cdots q_{m-n}=e=p_{1}\cdots p_{m-n}\]
such that $(q_{1},\ldots,q_{m-n},t_{1},\ldots,t_{n})$ and $(p_{1},\ldots,p_{m-n},r_{1},\ldots,r_{n})$ lie in the same Hurwitz orbit.
\end{corollary}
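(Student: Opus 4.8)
The plan is to deduce this corollary directly from Theorem~\ref{thm:main1.1}, which already does essentially all the work. Theorem~\ref{thm:main1.1} tells us that for the fixed reduced factorization $w=s_1\cdots s_m$ and the reduced reflection factorization $(t_1,\ldots,t_n)$, there are simple reflections $q_1,\ldots,q_{m-n}\in S$ and a braid $\sigma\in\B_m$ with $\sigma(s_1,\ldots,s_m)=(q_1,\ldots,q_{m-n},t_1,\ldots,t_n)$. This immediately gives the first assertion: the tuple $(q_1,\ldots,q_{m-n},t_1,\ldots,t_n)$ lies in $\B_m(s_1,\ldots,s_m)$, and $(t_1,\ldots,t_n)$ is a prefix of it (more precisely a suffix, which after reindexing by the obvious reversal is the same kind of ``subword'' statement).

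First I would record the product identity. Applying both invariants of the Hurwitz action (Remark after the definition of the Hurwitz action), the product of the entries is preserved, so
\[
q_1\cdots q_{m-n}\,t_1\cdots t_n = s_1\cdots s_m = w = t_1\cdots t_n,
\]
where the last equality holds because $(t_1,\ldots,t_n)$ is a factorization of $w$. Cancelling $t_1\cdots t_n$ on the right yields $q_1\cdots q_{m-n}=e$, which is exactly the required relation on the $q_i$. The same argument applied to $(r_1,\ldots,r_n)$ produces $p_1,\ldots,p_{m-n}\in S$ and a braid $\tau\in\B_m$ with $\tau(s_1,\ldots,s_m)=(p_1,\ldots,p_{m-n},r_1,\ldots,r_n)$ and $p_1\cdots p_{m-n}=e$.

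Finally I would chain the two together. Both $(q_1,\ldots,q_{m-n},t_1,\ldots,t_n)$ and $(p_1,\ldots,p_{m-n},r_1,\ldots,r_n)$ lie in the single Hurwitz orbit $\B_m(s_1,\ldots,s_m)$, hence they lie in the same Hurwitz orbit as each other; concretely the braid $\tau\sigma^{-1}\in\B_m$ carries the first tuple to the second. This is the whole content of the ``Moreover'' clause.

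I do not expect any genuine obstacle here, since the statement is a bookkeeping consequence of Theorem~\ref{thm:main1.1}. The only point requiring a little care is the verification that $q_1\cdots q_{m-n}=e$: one must be sure to invoke the product-preservation invariant of the Hurwitz action and to use that $(t_1,\ldots,t_n)$ is genuinely a factorization of the \emph{same} element $w$, so that the cancellation is legitimate. One should also note that the two applications of Theorem~\ref{thm:main1.1} use the same reduced word $s_1\cdots s_m$, which is what places both resulting tuples in one common orbit.
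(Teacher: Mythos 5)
Your proposal is correct and matches the paper's intended derivation: the paper states this corollary without proof as an immediate consequence of Theorem \ref{thm:main1.1}, and your argument (apply the theorem to each reduced reflection factorization with the same reduced word $s_1\cdots s_m$, use product-preservation of the Hurwitz action to get $q_1\cdots q_{m-n}=e=p_1\cdots p_{m-n}$, and chain the two braids through the common orbit $\B_m(s_1,\ldots,s_m)$) is exactly that deduction. Your parenthetical care about ``prefix'' versus ``suffix'' is a reasonable reading of the statement and does not affect correctness.
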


\section{Quasi-Coxeter elements in Weyl groups}\label{sec:quasi_weyl}

In this section we investigate uniformly the so-called quasi-Coxeter elements, which are a generalization of Coxeter elements. 
We deduce a case-free proof of \cite[Theorem 6.1]{LR16} for Weyl groups that determines the Hurwitz orbits of arbitrary reflection factorizations of quasi-Coxeter elements and that is already proven for Coxeter elements in arbitrary Coxeter groups of finite rank in \cite{WY19}. Most results of this section hold for quasi-Coxeter elements in arbitrary Coxeter systems of finite rank. We start with the definition of a (parabolic) quasi-Coxeter element.

\begin{definition}
Let $(W,S)$ be a Coxeter system of rank $n\in \NN$. An element $c\in W$ is called \defn{Coxeter element} if it is conjugated to an element that admits a factorization $c=s_{i_{1}}\cdots s_{i_{k}}$ in pairwise different simple reflections with $k=n$, and it is called \defn{parabolic Coxeter element} if $k\leq n$. An element $w\in W$ is called \defn{quasi-Coxeter element} if it admits a reduced reflection factorization with $n$ factors which generate $W$ and it is called \defn{parabolic quasi-Coxeter element} if it is a quasi-Coxeter element for a parabolic subgroup. It is called a \defn{proper} parabolic quasi-Coxeter element if it is a quasi-Coxeter element for a proper parabolic subgroup.
\end{definition}

\begin{example}
Every conjugate of a (parabolic) quasi-Coxeter element is a (parabolic) quasi-Coxeter element. Parabolic Coxeter elements are by definition parabolic quasi-Coxeter elements, but there exist quasi-Coxeter elements which are not conjugated to Coxeter elements, for instance see \cite[Example 2.4]{BGRW17}.
\end{example}

\begin{remark}
Let $W_{\Phi}$ be an irreducible simply laced Weyl group, that is, $\Phi$ is of type $A_{n}$, $D_{n}$ ($n \in \NN$) or $E_{6}, E_7, E_8$. Quasi-Coxeter elements were first defined by Voigt \cite{Voi85} in a slightly different way. He defines an element $w=s_{\alpha_{1}}\cdots s_{\alpha_{n}}\in W$ to be quasi-Coxeter if the $\ZZ-$span of the roots $\alpha_{1},\ldots,\alpha_{n}$ equals the root lattice of $\Phi$. By \cite[Lemma 5.12]{BGRW17} the two notions of quasi-Coxeter elements coincide.
\end{remark}

The next results are the first approach towards a uniform proof of the transitive Hurwitz action on the set of reduced reflection factorizations of quasi-Coxeter elements in Weyl groups. Nevertheless, if possible, we state and prove these results in a more general setting.

\begin{lemma}
Let $(W,S)$ be a Coxeter system of rank $n\in \NN$ and $t_{1},\ldots,t_{n}\in T$ such that $\langle t_{1},\ldots,t_{n} \rangle=W$. Then the multisets of conjugacy classes of $S$ and $\lbrace t_{1},\ldots,t_{n} \rbrace$ coincide.
\end{lemma}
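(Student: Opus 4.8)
The plan is to run Dyer's algorithm (Lemma~\ref{le:DyerAlg}) starting from the set $T_0=\{t_1,\ldots,t_n\}$ and to follow the multiset of $W$-conjugacy classes of the sets $T_i$ it produces. By Lemma~\ref{le:DyerAlg} the algorithm terminates at the canonical simple system $\chi(\langle t_1,\ldots,t_n\rangle)=\chi(W)$, so everything reduces to two claims: that $\chi(W)=S$, and that each step of the algorithm leaves the multiset of conjugacy classes of the current set unchanged.

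First I would identify $\chi(W)$ with $S$ directly from the definition of the canonical simple system. For $s\in S$ and any reflection $t'\neq s$ the product $st'$ is neither the identity nor a simple reflection---if $st'=s''\in S$ then $t'=ss''$ would not be a reflection---so $\ell_S(st')\ge 2>1=\ell_S(s)$ and hence $s\in\chi(W)$. Conversely, any non-simple reflection $t$ admits a reflection $t'\neq t$ with $\ell_S(tt')<\ell_S(t)$, so $t\notin\chi(W)$. Thus $\chi(W)=S$, and in particular $|\chi(W)|=n$.

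Next I would control the cardinalities so that the set-valued algorithm really tracks a multiset. Because $\langle t_1,\ldots,t_n\rangle=W$ has rank $n$ and a reflection subgroup generated by $m$ reflections has rank at most $m$ (\cite[Corollary~3.11]{Dye90}, as used in Lemma~\ref{lem:rk_par_cl}), the reflections $t_1,\ldots,t_n$ are pairwise distinct and $|T_0|=n$. Replacing a pair $\{t,t'\}$ by $\chi(\langle t,t'\rangle)$ does not change the generated subgroup, so $\langle T_i\rangle=W$ and hence $|T_i|\ge n$ for all $i$, while each step replaces two reflections by at most two, so $|T_{i+1}|\le|T_i|$. Since $|T_0|=n=|\chi(W)|$ and the algorithm ends at $\chi(W)$, every $T_i$ has exactly $n$ elements; this forces each step to be an honest two-for-two swap, in which $\chi(\langle t,t'\rangle)$ consists of two distinct reflections disjoint from $T_i\setminus\{t,t'\}$.

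Finally I would assemble the invariance. By the first part of Lemma~\ref{lem:set_of_refl}, the pairs $\{t,t'\}$ and $\chi(\langle t,t'\rangle)$ carry the same multiset of conjugacy classes under conjugation in $\langle t,t'\rangle$, hence also in $W$; since every step is a clean two-for-two swap, the multiset of $W$-conjugacy classes of $T_i$ is preserved at each stage. By induction the multiset of conjugacy classes of $T_0=\{t_1,\ldots,t_n\}$ coincides with that of the terminal set $\chi(W)=S$, which is exactly the assertion. I expect the cardinality bookkeeping of the third paragraph to be the main obstacle: Dyer's algorithm manipulates sets, and the passage to a multiset invariant is legitimate only once one knows that no collisions occur and that $|T_i|$ stays equal to $n$ throughout.
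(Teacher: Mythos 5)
Your proof follows the same route as the paper: run Dyer's algorithm (Lemma \ref{le:DyerAlg}) starting from $T_0=\{t_1,\ldots,t_n\}$, invoke Lemma \ref{lem:set_of_refl} to see that each exchange preserves the multiset of conjugacy classes, and observe that the algorithm terminates in $\chi(W)=S$. Your extra bookkeeping --- identifying $\chi(W)$ with $S$ explicitly and checking that every step is an honest two-for-two swap so that the multiset invariant is legitimate --- fills in details the paper leaves implicit, but it is the same argument.
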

\begin{proof}
If $n=1$ the assumption is obviously satisfied. Thus assume that $n\geq 2$. In the following we will use the algorithm described in Lemma \ref{le:DyerAlg}. Set $T_{0}=\lbrace t_{1},\ldots,t_{n}\rbrace$. In the first step of the algorithm we exchange two different reflections $t_{i},t_{j}\in T_{0}$ for $i<j$ by $\chi(\langle t_{i},t_{j} \rangle)$ to get $T_{1}$. By Lemma \ref{lem:set_of_refl} the multisets of the conjugacy classes of $\lbrace t_{i},t_{j} \rbrace$ and $\chi(\langle t_{1},t_{2} \rangle)$ coincide. Thus the multisets of conjugacy classes of $T_{0}$ and $T_{1}$ coincide. Inductively we get that the multisets of conjugacy classes of all the $T_{i}$ for $i\geq 0$ are the same. Since $\langle T_{0}\rangle=W$ the algorithm terminates in $S$ after finitely many steps, i.e. there exists a $m\in \ZZ_{\geq 0}$ such that $T_{m}=S$. The latter implies the assumption. 
\end{proof}

The following theorem can be deduced from Proposition \ref{prop:key_prop} by using the strong exchange condition for Coxeter groups.

\begin{theorem}[{\cite[Theorem 1.4]{BDSW14}}]\label{thm:fac_in_par_subg}
Let $(W,S)$ be a Coxeter system of finite rank, $P$ a parabolic subgroup and $w=t_{1}\ldots t_{m}\in P$ a reduced reflection factorization. Then $t_{1},\ldots,t_{m}\in P$.
\end{theorem}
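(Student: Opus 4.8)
The plan is to reduce to the case of a standard parabolic subgroup and then exploit the increasing Bruhat path furnished by Corollary \ref{cor:ordering} together with the strong exchange condition, before transferring the conclusion back to the original factorization via the subgroup-invariance of the Hurwitz action.

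First I would reduce to the case where $P=W_{I}$ is standard. Since every parabolic subgroup has the form $uW_{I}u^{-1}$, conjugating the entire factorization by $u^{-1}$ replaces each $t_{j}$ by the reflection $u^{-1}t_{j}u$, preserves the property of being a reduced reflection factorization, and turns $w$ into $u^{-1}wu\in W_{I}$; moreover $t_{j}\in P$ if and only if $u^{-1}t_{j}u\in W_{I}$. Hence we may assume $w\in W_{I}$.

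Next, by Corollary \ref{cor:ordering} there is a factorization $(t'_{1},\ldots,t'_{m})\in\B_{m}(t_{1},\ldots,t_{m})$ whose associated Bruhat path from $e$ to $w$ is strictly increasing,
\[e\longrightarrow t'_{1}\longrightarrow t'_{1}t'_{2}\longrightarrow\cdots\longrightarrow t'_{1}\cdots t'_{m}=w.\]
I would argue by downward induction along this path that every vertex $u_{j}:=t'_{1}\cdots t'_{j}$ lies in $W_{I}$ and every $t'_{j}\in W_{I}$. The base case is $u_{m}=w\in W_{I}$. For the inductive step, since $u_{j}\in W_{I}$ and $W_{I}$ is standard, any $S$-reduced expression $u_{j}=s_{1}\cdots s_{k}$ uses only letters from $I$. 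From $u_{j-1}=u_{j}t'_{j}$ with $\ell_{S}(u_{j-1})<\ell_{S}(u_{j})$, the strong exchange condition produces an index $i$ with $u_{j-1}=s_{1}\cdots\widehat{s_{i}}\cdots s_{k}$ and $t'_{j}=s_{k}\cdots s_{i+1}s_{i}s_{i+1}\cdots s_{k}$. Both expressions involve only simple reflections from $I$, so $u_{j-1}\in W_{I}$ and $t'_{j}\in W_{I}$, completing the induction.

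Finally, since the generated subgroup is an invariant of the Hurwitz action (see the remark in Section \ref{sec:Hurwitz}), we obtain $\langle t_{1},\ldots,t_{m}\rangle=\langle t'_{1},\ldots,t'_{m}\rangle\subseteq W_{I}=P$, and therefore each $t_{j}\in P$. The step I expect to require the most care is the inductive argument: one must verify that the deletion supplied by the strong exchange condition keeps every intermediate element and every extracted reflection inside the standard parabolic $W_{I}$, which rests on the characterization of $W_{I}$ as exactly those elements whose $S$-reduced words use only letters of $I$.
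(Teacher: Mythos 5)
Your proposal is correct and follows essentially the route the paper indicates for this theorem: reduce to a standard parabolic, apply Corollary \ref{cor:ordering} (the reduced-factorization case of Proposition \ref{prop:key_prop}) to obtain a strictly increasing Bruhat path, extract membership in $W_I$ via the strong exchange condition, and transfer back through the Hurwitz invariance of the generated subgroup. The inductive step you flag as delicate is handled correctly, since every $S$-reduced word for an element of $W_I$ uses only letters of $I$, so the deleted-letter expressions for $u_{j-1}$ and $t'_j$ stay inside $W_I$.
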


As a consequence of the previous theorem we get the following result.

\begin{lemma}[{\cite[Proposition 2.5]{HW20}}]\label{lem:parab_cl}
Let $(W,S)$ be a Coxeter system of finite rank, $(t_{1},\ldots, t_{m})
$ a reduced reflection factorization of $w\in W$. Then the parabolic closure $P(w)$ of $w$ coincides with the parabolic closure $P(t_{1},\ldots,t_{m})$ of $t_{1},\ldots,t_{m}$. 
\end{lemma}

Another common property of Coxeter elements and quasi-Coxeter element is the following connection between the order of a quasi-Coxeter element and the order of the ambient Coxeter group. Since Coxeter elements are quasi-Coxeter elements the following result is a new proof of the classical fact proven in \cite{Ho82}.

\begin{corollary} \label{cor:FinOrdQuasi}
Let $(W,S)$ be an irreducible Coxeter system of finite rank and $w\in W$ a quasi-Coxeter element. The order of $w$ is finite if and only if $W$ is finite.
\end{corollary}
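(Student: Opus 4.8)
The plan is to prove the equivalence by reducing it to a statement about the parabolic closure $P(w)$. The direction that a finite $W$ forces $w$ to have finite order is immediate, since every element of a finite group has finite order. For the converse I would exploit the fact, recorded in Lemma \ref{lem:parab_cl}, that the parabolic closure of a quasi-Coxeter element is as large as possible.

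First I would observe that for a quasi-Coxeter element $w$ the parabolic closure equals the whole group. Indeed, by definition $w$ admits a reduced reflection factorization $w=t_{1}\cdots t_{n}$ with $\langle t_{1},\ldots,t_{n}\rangle = W$, where $n$ is the rank. Lemma \ref{lem:parab_cl} gives $P(w)=P(t_{1},\ldots,t_{n})$, and the latter is by definition the smallest parabolic subgroup containing $\langle t_{1},\ldots,t_{n}\rangle = W$. Since $W$ is itself a (standard) parabolic subgroup, this forces $P(w)=W$.

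Next I would invoke the general principle that an element of finite order has a finite parabolic closure. Concretely, if $w$ has finite order then $\langle w\rangle$ is a finite subgroup of $W$, and every finite subgroup of a Coxeter group of finite rank is contained in a finite parabolic subgroup (this is the Bruhat--Tits fixed point theorem applied to the proper action on the $\mathrm{CAT}(0)$ Davis complex, whose point stabilizers are exactly the finite parabolic subgroups). Choosing such a finite parabolic $P$ with $w\in\langle w\rangle\subseteq P$, the minimality of the parabolic closure yields $P(w)\subseteq P$, so $P(w)$ is finite. Combining this with the identity $P(w)=W$ from the previous step shows that $W=P(w)$ is finite, as desired.

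The main obstacle is precisely the input that finite subgroups of a Coxeter group lie inside finite parabolic subgroups; once this is granted, everything else is a short deduction from Lemma \ref{lem:parab_cl} and the definitions. I note that irreducibility of $(W,S)$ is not actually needed for this argument: both the identity $P(w)=W$ and the equivalence between $w$ having finite order and $P(w)$ being finite hold verbatim in the reducible case, so irreducibility is included only to match the classical formulation of \cite{Ho82}.
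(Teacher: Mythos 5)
Your proof is correct, and its second half takes a genuinely different route from the paper. Both arguments begin identically: Lemma \ref{lem:parab_cl} gives $P(w)=W$ for a quasi-Coxeter element $w$. From there the paper argues by contraposition using a result of Paris \cite[Corollary 2.5]{Pa07}: if $W$ is infinite, then every power $w^{p}$ of the essential element $w$ is again essential, hence $w^{p}\neq e$ for all $p$, so $w$ has infinite order. You instead argue directly: if $w$ has finite order, then $\langle w\rangle$ is a finite subgroup, hence contained in a finite parabolic subgroup by Tits' theorem (this is exactly Lemma \ref{lem:Tits} of the paper, quoted from \cite[Theorem 12.3.4 (i)]{Davis}, albeit stated a few lines after this corollary), and minimality of the parabolic closure forces $P(w)=W$ to be finite. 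Your route has two small advantages: it reuses an ingredient the paper already imports for Lemma \ref{lem:nice_factorization} and Proposition \ref{prop:InfiniteAffSub} rather than bringing in an additional external result, and, as you observe, it does not use irreducibility, whereas Paris's theorem on powers of essential elements is stated for irreducible infinite Coxeter groups (the paper would need to reduce to irreducible components to drop that hypothesis). What the paper's approach buys is a slightly stronger intermediate fact, namely that all powers of $w$ remain essential, which is of independent interest but not needed for the corollary.
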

\begin{proof}
By Lemma \ref{lem:parab_cl} it holds $P(w)=W$. Elements with the previous property are called essential. 
Assume that $W$ is infinite. By \cite[Corollary 2.5]{Pa07} we have that $w^{p}$ is essential for all $p\in \NN$. In particular, $w^{p}\neq e$ for all $p\in \NN$.
\end{proof}

\begin{definition} \label{def:AbsOrder}
Let $(W,S)$ be a Coxeter system with set of reflections $T$. We define a partial order $\leq_T$ on $W$, called \defn{absolute order}, by setting for $u,v \in W$:
$$
u \leq_T v \iff \ell_T(u) + \ell_T(u^{-1}v) = \ell_T(v).
$$
\end{definition}

Equivalently, we have $u \leq_T v$ if and only if there exists $(t_1, \ldots , t_m) \in \Red_T(v)$ and some $k \leq m$ such that $(t_1, \ldots , t_k) \in \Red_T(u)$.

\medskip

Theorem \ref{prop:prefix_quasi_cox} provides a characterization of \textit{maximal} parabolic quasi-Coxeter elements in Coxeter systems of finite rank $n$, that is, parabolic quasi-Coxeter elements of reflection length $n-1$. We state its proof.

\begin{proof}[Proof of Theorem \ref{prop:prefix_quasi_cox}]
For the \textit{only if} direction note that by assumption there exists a reduced reflection factorization $(r_1, \ldots, r_{n-1}) \in \Red_T(x)$ sucht that $P:=\langle r_1, \ldots, r_{n-1} \rangle$ is parabolic. In particular, $P(x) \subseteq P \not\subseteq W$, where the latter inclusion is proper since $W$ cannot be generated by less than $n$ reflections (see \cite[Proposition 2.1]{BGRW17}). Furthermore, since $P$ is parabolic, there exists $r_n \in T$ with $\langle P, r_n \rangle = W$. If we set $w=r_1 \cdots r_{n-1}r_n$, we have that $w$ is a quasi-Coxeter element and $x \leq_T w$ as desired.

We show the \textit{if} direction. Let $(t_{1},\ldots,t_{n})$ be a reduced reflection factorization of $w$ such that $\langle t_{1},\ldots,t_{n} \rangle=W$, $w=tx$ with $\ell_{T}(w)=\ell_{T}(x)+1$. Without loss of generality, we can assume that $P(x)$ is a standard parabolic subgroup of $W$. Assume that each factorization in the orbit
\[\B_{n+1}(t,t_{1},\ldots,t_{n})\subseteq \text{Fac}_{T,n+1}(x):=\lbrace (r_{1},\ldots,r_{n+1})\in T^{n+1}\mid r_{1}\cdots r_{n+1}=x \rbrace\]
contains pairwise different factors. By Corollary \ref{cor:ordering} there exists a reflection factorization $(t',t'_{1},\ldots,t'_{n})\in \B_{n+1}(t,t_{1},\ldots,t_{n})$ that corresponds to the following directed path in the Bruhat graph 
\[x\longleftarrow xt'_{n}\longleftarrow xt'_{n}t'_{n-1}\longleftarrow\ldots \longleftarrow xt'_{n}t'_{n-1}\cdots t'_{1}=t'\longleftarrow e.\]
Since $P(x)$ is a proper standard parabolic subgroup, the strong exchange condition yields 
$W=\langle t,t_{1},\ldots,t_{n} \rangle=\langle t',t'_{1},\ldots,t'_{n} \rangle\subseteq P(x)$,
a contradiction. Thus there exists a reflection factorization $(t'_{1},\ldots,t'_{n-1},t',t')\in \B_{n+1}(t,t_{1},\ldots,t_{n})$ with $(t'_{1},\ldots,t'_{n-1})\in \text{Red}_{T}(x)$.
In particular, we have $\langle t'_{1},\ldots,t'_{n-1},t' \rangle=W$ and thus 
\[\chi(\langle t'_{1},\ldots,t'_{n-1},t' \rangle)=\chi(W)=S.\]
We use the algorithm described in Lemma \ref{le:DyerAlg}. We start with the set $T_{0}:=\chi(t'_{1},\ldots,t'_{n-1})\cup \lbrace t' \rbrace$. Since $\chi(\langle t'_{1},\ldots,t'_{n-1},t' \rangle)=S$ it terminates in $T_{k}=\chi(W)=S$ for $k\gg 0$. In each step $i\in \ZZ_{\geq 0}$ it exchanges $\lbrace r_{1},r_{2}\rbrace \subseteq T_{i}$ $(r_{1}\neq r_{2})$ by $\chi(\langle r_{1},r_{2} \rangle)$. Since $t'\notin P(x)$ and $\langle \chi(t'_{1},\ldots,t'_{n-1})\rangle = \langle t'_{1},\ldots,t'_{n-1} \rangle \subseteq P(x)$, Lemma \ref{prop:root_in_parab_subgrp} yields that $\chi(\langle t'_{1},\ldots, t'_{n-1} \rangle)\subseteq T_{i}$ for all $i\geq 0$. In particular, \[S=T_{k}=\chi(\langle t'_{1},\ldots, t'_{n-1} \rangle) \cup \lbrace s \rbrace\]
with $s\in T$ and $k\gg 0$. Thus we get that 
\[\chi(\langle t'_{1},\ldots, t'_{n-1} \rangle)\subseteq \langle t'_{1},\ldots, t'_{n-1} \rangle \cap P(x)\cap S\]
and $|\chi(\langle t'_{1},\ldots, t'_{n-1} \rangle)|=n-1$. Since $P(x)$ is a proper standard parabolic subgroup we have $P(x)\cap S=\chi(\langle t'_{1},\ldots, t'_{n-1} \rangle$ and therefore $P(x)=\langle t'_{1},\ldots, t'_{n-1} \rangle$.
\end{proof}

\begin{question}
Is Theorem \ref{prop:prefix_quasi_cox} still true if only considering (parabolic) Coxeter elements? More precisely, is the following statement true:

Let $(W,S)$ be a Coxeter system of rank $n\in \NN$ and $x \in W$ with $\ell_T(x)= n-1$. Then the element $x$ is a parabolic Coxeter element if and only if there exists a Coxeter element $c \in W$ with $x \leq_T c$ and $P(x)\neq W$.
\end{question}

\begin{remark}
In the general case, the assumption $P(x) \neq W$ in Theorem \ref{prop:prefix_quasi_cox} is necessary as shown for instance in \cite[Example 5.7]{HK16} while for finite Coxeter groups it is redundant (see Corollary \ref{cor:FinCharQuasi} below).
\end{remark}

\begin{lemma}[{\cite[Theorem 12.3.4 (i)]{Davis}}] \label{lem:Tits}
Let $(W,S)$ be a Coxeter system of finite rank. Then any finite subgroup of $W$ is contained in a finite parabolic subgroup.
\end{lemma}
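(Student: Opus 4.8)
The plan is to argue geometrically rather than combinatorially, by exhibiting a complete CAT(0) space on which $W$ acts with all point stabilizers finite parabolic subgroups, and then invoking a fixed-point theorem. The natural such space is the Davis complex $\Sigma = \Sigma(W,S)$.

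First I would recall three structural facts about $\Sigma$, all available in the cited source \cite{Davis}. (i) By Moussong's theorem, $\Sigma$ carries a complete, piecewise Euclidean CAT(0) metric. (ii) The group $W$ acts on $\Sigma$ by isometries. (iii) The cells of $\Sigma$ are indexed by cosets $wW_I$ with $W_I$ \emph{spherical} (i.e.\ finite), each point $p \in \Sigma$ lies in the interior of a unique minimal cell, and $\stab_W(p)$ equals the stabilizer of that cell, namely a conjugate $wW_Iw^{-1}$ of a finite standard parabolic subgroup. In particular every point stabilizer is a finite parabolic subgroup.

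Next I would apply the Bruhat--Tits fixed point theorem: any group of isometries of a complete CAT(0) space that has a bounded orbit fixes a point, realized as the circumcenter of that orbit. Given a finite subgroup $H \leq W$, pick any $p_0 \in \Sigma$; the orbit $H \cdot p_0$ is finite, hence bounded, so $H$ fixes the circumcenter $p$ of $H \cdot p_0$. By fact (iii), $\stab_W(p)$ is a finite parabolic subgroup, and $H \subseteq \stab_W(p)$, which is exactly the assertion.

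The main obstacle is concentrated entirely in the input facts (i) and (iii): verifying that $\Sigma$ is CAT(0) and that its cell structure realizes precisely the finite parabolic subgroups as stabilizers. Both are nontrivial but standard and are established in \cite{Davis}, so I would simply quote them. An alternative, more self-contained route works inside the Tits cone $U \subseteq V^*$: it is convex and $W$-stable, so the $H$-average of a point of $U$ is an $H$-fixed point of $U$. The delicate part there is that a point of $U$ may have infinite stabilizer (for instance the origin, or a point fixed by an infinite dihedral parabolic subgroup), so one must arrange that the resulting fixed point lies on a face whose stabilizer is spherical. This is exactly the subtlety that properness of the action on $\Sigma$ circumvents, which is why I would prefer the CAT(0) argument.
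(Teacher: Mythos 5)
Your argument is correct and is essentially the proof of the cited result in Davis's book: the paper gives no proof of its own, referring to \cite[Theorem 12.3.4 (i)]{Davis}, which is established there exactly via Moussong's theorem (the Davis complex is complete CAT(0)), the fact that point stabilizers are conjugates of finite standard parabolic subgroups, and the Bruhat--Tits fixed point theorem applied to a bounded orbit of the finite subgroup. Nothing to add.
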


\begin{remark}
Let $(W,S)$ be an affine Coxeter system, $c \in W$ a Coxeter element and $x\leq_{T}c$. By Lemma \ref{lem:Tits}, the element $x$ is elliptic if and only if $P(x)$ is a proper parabolic subgroup. In this case, by \cite[Theorem 3.22 (i)]{PS19}, the element $x$ is a parabolic Coxeter element and is therefore in particular a parabolic quasi-Coxeter element. The latter is also covered by Corollary \ref{main4}.
\end{remark}



The next lemma shows that a reduced reflection factorization that generates an affine Coxeter group can be transformed by using the Hurwitz action such that the prerequisite of Theorem \ref{prop:prefix_quasi_cox} is satisfied.

\begin{lemma}\label{lem:nice_factorization}
Let $(W,S)$ be a finite or affine Coxeter system of rank $n\in \NN$ and $w=t_{1}\cdots t_{n}\in W$ a reflection factorization of a quasi-Coxeter element such that $\langle t_{1},\ldots,t_{n} \rangle = W$. Then there exists $(r_{1},\ldots,r_{n})\in \B_{n}(t_{1},\ldots,t_{n})$ such that $P(r_{1}\cdots r_{n-1})\neq W$.
\end{lemma}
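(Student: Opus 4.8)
The plan is to treat the finite and affine cases separately. The finite case is immediate: since $w$ is a quasi-Coxeter element we have $\ell_T(w)=n$, so the prefix $t_1\cdots t_{n-1}$ is already a reduced reflection factorization of length $n-1$. By Lemma~\ref{lem:rk_par_cl} the reflection subgroup $\langle t_1,\dots,t_{n-1}\rangle$ has rank $n-1$, which equals the rank of $P(t_1,\dots,t_{n-1})=P(t_1\cdots t_{n-1})$ (Lemma~\ref{lem:parab_cl}); a parabolic subgroup of rank $n-1$ is a proper subgroup of the rank-$n$ group $W$, so $P(t_1\cdots t_{n-1})\neq W$ and one takes $(r_1,\dots,r_n)=(t_1,\dots,t_n)$. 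The real content is the affine case, where the strategy is to produce a Hurwitz-equivalent factorization whose length-$(n-1)$ prefix $x=r_1\cdots r_{n-1}$ is \emph{elliptic}: once $x$ is elliptic it has finite order, so by Lemma~\ref{lem:Tits} the finite group $\langle x\rangle$ lies in a finite parabolic subgroup $P'$, and then $P(x)\subseteq P'\subsetneq W$ since $W$ is infinite.

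So let $W=W_{a,\Phi}$ with $\Phi$ irreducible of rank $n-1$ (the reducible case follows componentwise), and let $p\colon W\to W_\Phi$ be the projection~(\ref{equ:ProjAffFin}). Since $\langle t_1,\dots,t_n\rangle=W$ is infinite, Corollary~\ref{cor:FinOrdQuasi} shows that $w$ has infinite order, i.e.\ $w$ is non-elliptic; hence its linear part $\bar w=p(w)$ satisfies $\dim\Fix(\bar w)=1$ and $\dim\Mov(\bar w)=n-2$, by the reflection-length formula for affine Coxeter groups ($\ell_T(w)=\dim\Mov(\bar w)+2$ in the non-elliptic case). Writing $t_i=s_{\alpha_i,k_i}$, the projected tuple $(p(t_1),\dots,p(t_n))=(s_{\alpha_1},\dots,s_{\alpha_n})$ is then a length-$n$ reflection factorization of $\bar w$ in the finite group $W_\Phi$ of reflection length $n-2$, that is, with excess exactly $2$.

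Now I apply Theorem~\ref{thm:main1} inside $W_\Phi$ to obtain a braid $\sigma\in\B_n$ with $\sigma(s_{\alpha_1},\dots,s_{\alpha_n})=(\rho_1,\dots,\rho_{n-2},s_\beta,s_\beta)$. As $p$ is a homomorphism it intertwines the Hurwitz action, so applying the \emph{same} $\sigma$ to $(t_1,\dots,t_n)$ produces $(r_1,\dots,r_n)\in\B_n(t_1,\dots,t_n)$ with $p(r_{n-1})=p(r_n)=s_\beta$. Thus $r_{n-1}\neq r_n$ have the same linear part, so $r_{n-1}r_n=t_\mu$ is a nontrivial translation with $\mu\in\mathbb R\beta^\vee\setminus\{0\}$, while $e:=r_1\cdots r_{n-2}$ satisfies $p(e)=\bar w$ and $\ell_T(e)=n-2=\dim\Mov(\bar w)$, whence $e$ is elliptic. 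Because $w=e\,t_\mu$ is non-elliptic, the $\Fix(\bar w)$-component of $\mu$ is nonzero, i.e.\ $\beta\notin\Mov(\bar w)$; therefore $\dim\Mov(\bar w\,s_\beta)=n-1$, so $p(x)=\bar w\,s_\beta$ has trivial fixed space. An affine isometry whose linear part has trivial fixed space has a fixed point, so $x=r_1\cdots r_{n-1}$ is elliptic, which is exactly what is needed.

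The main obstacle is the transfer between $W$ and its finite quotient $W_\Phi$. Concretely, one must be certain the excess of the projected factorization is exactly $2$ (equivalently $\dim\Mov(\bar w)=n-2$), which rests on the reflection-length formula for affine Coxeter groups, and one must verify that the single repeated pair produced by Theorem~\ref{thm:main1} carries precisely the translation direction responsible for $w$ being non-elliptic, so that deleting one of its two reflections removes the last fixed direction of the linear part. The subsidiary points---that $p$ intertwines Hurwitz moves, that $r_{n-1}r_n$ is a nonzero translation, and that $\beta\notin\Mov(\bar w)$ forces $\dim\Mov(\bar w\,s_\beta)=n-1$---are routine geometric verifications.
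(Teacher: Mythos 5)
Your proposal follows essentially the same route as the paper's proof: the finite case via Lemmas \ref{lem:rk_par_cl} and \ref{lem:parab_cl}, and the affine case by projecting to the finite quotient $W_{\mathrm{fin}}$, applying Theorem \ref{thm:main1} there, lifting the resulting braid back to the affine group, showing the length-$(n-1)$ prefix is elliptic, and invoking Lemma \ref{lem:Tits}. One caveat: the ``reflection-length formula'' you quote, $\ell_T(w)=\dim\Mov(\bar w)+2$ for non-elliptic $w$, is not the general statement (the correct formula reads $\dim\Mov(\bar w)+2e(w)$ with $e(w)\geq 1$, and $e(w)=2$ does occur, e.g.\ for generic translations); the conclusion $\dim\Mov(\bar w)=n-2$ you need is nevertheless correct, but the clean justification is that $W_{\mathrm{fin}}$ has rank $n-1$ and cannot be generated by fewer than $n-1$ reflections, so Theorem \ref{thm:main1} can produce only a single duplicated pair. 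The paper also obtains ellipticity of $r_1\cdots r_{n-1}$ more directly than your moved-space computation: the $n-1$ roots of $p(r_1),\ldots,p(r_{n-1})$ generate the rank-$(n-1)$ group $W_{\mathrm{fin}}$, hence are linearly independent, and \cite[Lemma 1.26]{LMPS19} then gives ellipticity at once.
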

\begin{proof}
If $(W,S)$ is finite, Lemmata \ref{lem:rk_par_cl} and \ref{lem:parab_cl} imply that $P(t_{1}\cdots t_{n-1})\neq W$. Thus assume that $(W,S)$ is affine and let $p:W\longrightarrow W_{\text{fin}}$ the canonical projection to the corresponding finite Coxeter group $W_{\text{fin}}$ (see (\ref{equ:ProjAffFin})). Since $(W_{\text{fin}},p(S))$ is a finite Coxeter system of rank $n-1$ with set of reflections $p(T)$, the factorization $(p(t_{1}),\ldots,p(t_{n}))$ is not a reduced reflection factorization. By Theorem \ref{thm:main1} there exists a factorization $(r_{1},\ldots,r_{n})\in \B_{n}(t_{1},\ldots,t_{n})$ such that $p(r_{n-1})=p(r_{n})$. Since 
\[W_{\text{fin}}=p(\langle t_{1},\ldots,t_{n} \rangle)=\langle p(t_{1}),\ldots,p(t_{n}) \rangle\]
we get that $W_{\text{fin}}=\langle p(r_{1}),\ldots,p(r_{n-1}) \rangle$, and therefore $(p(r_{1}),\ldots,p(r_{n-1}))$ is a reduced reflection factorization. Thus Carter's Lemma \ref{lem:Carter} implies that the corresponding roots are linear independent and by \cite[Lemma 1.26]{LMPS19} the element $r_{1}\cdots r_{n-1}$ is elliptic. Thus it has finite order, and therefore $P(r_{1}\cdots r_{n-1})$ is finite by Lemma \ref{lem:Tits}. In particular, we get that $P(r_{1}\cdots r_{n-1})\neq W$.
\end{proof}

\begin{remark}
Note that under the assumptions of Lemma \ref{lem:nice_factorization}, the reflection factorization $t_1 \cdots t_n$ is indeed reduced by \cite[Proposition 5.1]{Weg17a}
\end{remark}

We state two direct consequences of Theorem \ref{prop:prefix_quasi_cox} for finite Coxeter groups. Both of which already appear in \cite{BGRW17}. Again we provide uniform proofs.

\begin{corollary}\label{cor:prefix_quasi}
Let $(W,S)$ be a finite Coxeter system of rank $n$ and $t_{1},\ldots,t_{n}\in T$ with $<t_{1},\ldots,t_{n}>=W$, then $<t_{1},\ldots,t_{n-1}>$ is a parabolic subgroup of rank $n-1$.
\end{corollary}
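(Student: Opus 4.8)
The plan is to apply Theorem \ref{prop:prefix_quasi_cox} to the prefix $x := t_1\cdots t_{n-1}$ of $w := t_1\cdots t_n$, and then to upgrade its conclusion ``$x$ is a proper parabolic quasi-Coxeter element'' into the precise identification $\langle t_1,\ldots,t_{n-1}\rangle = P(x)$.

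First I would verify the hypotheses of Theorem \ref{prop:prefix_quasi_cox}. Since $\langle t_1,\ldots,t_n\rangle = W$ has rank $n$ and is generated by the $n$ reflections $t_1,\ldots,t_n$, their roots must be linearly independent: otherwise they would span a proper subspace, the group would fix a nonzero vector pointwise, and could not have rank $n$. Hence Carter's Lemma \ref{lem:Carter} shows that both $w=t_1\cdots t_n$ and its prefix $x=t_1\cdots t_{n-1}$ are reduced; in particular $w$ is a quasi-Coxeter element, $\ell_T(x)=n-1$, and $x\leq_T w$. By Lemmata \ref{lem:rk_par_cl} and \ref{lem:parab_cl} the parabolic closure $P(x)=P(t_1,\ldots,t_{n-1})$ is parabolic of rank $n-1$, so $P(x)\neq W$; and since $\langle t_1,\ldots,t_n\rangle=W$ would otherwise lie in $P(x)$, we also get $t_n\in T\setminus P(x)$. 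Theorem \ref{prop:prefix_quasi_cox} now applies and yields that $x$ is a proper parabolic quasi-Coxeter element with $P(x)$ of rank $n-1$.

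It remains to show $\langle t_1,\ldots,t_{n-1}\rangle = P(x)$. This is the genuinely new point, and rank equality alone does not suffice: a rank-$(n-1)$ reflection subgroup can sit properly inside a rank-$(n-1)$ parabolic, as the subgroup generated by two orthogonal long roots does inside $B_2$. Here I would use the hypothesis $\langle t_1,\ldots,t_n\rangle=W$ directly, repeating the final paragraph of the proof of Theorem \ref{prop:prefix_quasi_cox} with the original factorization. Indeed, $x=wt_n$ exhibits $(t_1,\ldots,t_{n-1},t_n,t_n)$ as a length-$(n+1)$ factorization of $x$ that already carries a repeated factor, so no Hurwitz moves are needed to reach the favorable case. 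After conjugating so that $P(x)$ is a standard parabolic, I would run Dyer's algorithm (Lemma \ref{le:DyerAlg}) starting from $\chi(\langle t_1,\ldots,t_{n-1}\rangle)\cup\{t_n\}$. Because $t_n\in T\setminus P(x)$ while $\chi(\langle t_1,\ldots,t_{n-1}\rangle)\subseteq P(x)\cap T$, Proposition \ref{prop:root_in_parab_subgrp} guarantees that each element of $\chi(\langle t_1,\ldots,t_{n-1}\rangle)$ survives every step. As $\langle\chi(\langle t_1,\ldots,t_{n-1}\rangle),t_n\rangle=\langle t_1,\ldots,t_n\rangle=W$, the algorithm terminates at $\chi(W)=S$, giving $S=\chi(\langle t_1,\ldots,t_{n-1}\rangle)\sqcup\{s\}$ for a single $s\in S$. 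Since $P(x)$ is a standard parabolic and $\chi(\langle t_1,\ldots,t_{n-1}\rangle)$ is an $(n-1)$-element subset of $P(x)\cap S$, which also has $n-1$ elements, the two coincide, whence $P(x)=\langle P(x)\cap S\rangle=\langle t_1,\ldots,t_{n-1}\rangle$.

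The main obstacle is exactly this last identification. Theorem \ref{prop:prefix_quasi_cox} as a black box only produces \emph{some} reduced factorization of $x$ generating a parabolic subgroup, and controlling the behaviour of canonical simple systems through Dyer's algorithm (via Proposition \ref{prop:root_in_parab_subgrp}) is what forces the \emph{specific} subgroup $\langle t_1,\ldots,t_{n-1}\rangle$ to be the full parabolic closure; this is where being a prefix of a factorization generating all of $W$, rather than merely satisfying $x\leq_T w$ and $P(x)\neq W$, is essential.
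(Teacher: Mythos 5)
Your argument is correct and travels the same road as the paper, but it is substantially more complete. The paper's own proof of this corollary is a single sentence: by Lemma \ref{lem:rk_par_cl} one has $P(t_1,\ldots,t_{n-1})\neq W$, hence $t_1\cdots t_{n-1}$ is a parabolic quasi-Coxeter element by Theorem \ref{prop:prefix_quasi_cox}. The issue you isolate in your final two paragraphs is genuine: as stated, Theorem \ref{prop:prefix_quasi_cox} only guarantees that \emph{some} reduced reflection factorization of $x=t_1\cdots t_{n-1}$ generates a parabolic subgroup, while the corollary asserts that the \emph{given} subgroup $\langle t_1,\ldots,t_{n-1}\rangle$ is parabolic, and equality of ranks alone does not force $\langle t_1,\ldots,t_{n-1}\rangle=P(x)$ (your $B_2$ example is exactly the right caution). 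The paper leaves this identification implicit; to justify it one must reopen the proof of Theorem \ref{prop:prefix_quasi_cox} and observe that the Dyer-algorithm argument there, started from $T_0=\chi(\langle t_1,\ldots,t_{n-1}\rangle)\cup\{t_n\}$ and controlled by Proposition \ref{prop:root_in_parab_subgrp}, pins down $P(x)=\langle t_1,\ldots,t_{n-1}\rangle$. That is precisely what you do, and your remark that $(t_1,\ldots,t_{n-1},t_n,t_n)$ already carries a repeated factor, so no Hurwitz preprocessing is needed before running the algorithm, is correct. In short, your write-up is a fleshed-out version of the intended argument rather than a different one; what the paper's terser proof buys is brevity, at the cost of leaving the final (and not entirely formal) step to the reader.
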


\begin{proof}
By Lemma \ref{lem:rk_par_cl} we have $P(t_1, \ldots, t_{n-1}) \neq W$, hence $t_1\cdots t_{n-1}$ is a parabolic quasi-Coxeter element.
\end{proof}

Theorem \ref{prop:prefix_quasi_cox} is a generalization of \cite[Corollary 6.11]{BGRW17}, which characterizes parabolic quasi-Coxeter elements in finite Coxeter groups. We show that this characterization is a direct consequence of Theorem \ref{prop:prefix_quasi_cox}.

\begin{corollary} \label{cor:FinCharQuasi}
Let $(W,S)$ be a finite Coxeter system with set of reflections $T$ and let $x \in W$. Then $x$ is a parabolic quasi-Coxeter element if and only if there exists a quasi-Coxeter element $w\in W$ such that $x \leq_T w$.
\end{corollary}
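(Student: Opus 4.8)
The plan is to treat the two implications separately, using throughout that a quasi-Coxeter element is in particular a parabolic quasi-Coxeter element (for the improper parabolic subgroup $W$), and that in the finite group $W$ of rank $n$ a quasi-Coxeter element $w$ has $\ell_T(w)=n$ by definition. For the \emph{only if} direction, let $(r_1,\ldots,r_k)\in\Red_T(x)$ be a reduced factorization with $P:=\langle r_1,\ldots,r_k\rangle$ parabolic of rank $k$. If $P=W$ then $x$ is already quasi-Coxeter and $w=x$ works; otherwise, after conjugating the whole configuration I may assume $P=W_J$ is standard with $J\subsetneq S$. I then set $w=r_1\cdots r_k\prod_{s\in S\setminus J}s$, which generates $W$. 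Its roots are linearly independent, since those of $r_1,\ldots,r_k$ are independent by Carter's Lemma \ref{lem:Carter} and lie in the span of the simple roots indexed by $J$, while the remaining simple roots complete them to a basis; hence $w$ is reduced of length $n$, i.e.\ quasi-Coxeter, with $x\leq_T w$. Conjugating back and invoking conjugation-invariance of $\leq_T$ concludes this direction.

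For the \emph{if} direction I would induct on the rank $n$, understanding the statement uniformly over all finite Coxeter systems. Fix a reduced factorization $(t_1,\ldots,t_n)\in\Red_T(w)$ of the given quasi-Coxeter element with $x=t_1\cdots t_k$ and $k=\ell_T(x)$. If $k=n$ then $x=w$ is quasi-Coxeter and we are done. If $k<n$, put $y=t_1\cdots t_{n-1}$, so $\ell_T(y)=n-1$ and $x\leq_T y\leq_T w$. By Lemmata \ref{lem:rk_par_cl} and \ref{lem:parab_cl}, $P(y)$ is parabolic of rank $n-1$, whence $P(y)\neq W$, so Theorem \ref{prop:prefix_quasi_cox} applies to $y$ and shows that $y$ is a quasi-Coxeter element of the finite rank-$(n-1)$ Coxeter system $(P(y),\chi(P(y)))$. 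By Theorem \ref{thm:fac_in_par_subg} a reduced $W$-factorization of an element of $P(y)$ stays inside $P(y)$, so $\ell_{T\cap P(y)}$ and $\ell_T$ agree on $P(y)$ and the relation $x\leq_T y$ persists as $x\leq_{T\cap P(y)}y$. The induction hypothesis, applied inside $P(y)$, yields that $x$ is a parabolic quasi-Coxeter element of $P(y)$; since a parabolic subgroup of the parabolic subgroup $P(y)$ is again parabolic in $W$, this exhibits $x$ as a parabolic quasi-Coxeter element of $W$.

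I expect the descent step to be the only delicate point. The temptation is to argue that the tail $\langle t_1,\ldots,t_{n-1}\rangle$ itself equals $P(y)$, but this is false in general: a reflection subgroup and its parabolic closure may have equal rank without coinciding. The induction nevertheless goes through because it consumes only two factorization-independent facts about $y$ inside $P(y)$: that $y$ is \emph{some} quasi-Coxeter element there, which is exactly the content of Theorem \ref{prop:prefix_quasi_cox}, and that $x\leq_T y$ transfers verbatim to $P(y)$, which follows from Theorem \ref{thm:fac_in_par_subg} through the equality of reflection lengths $\ell_{T\cap P(y)}=\ell_T|_{P(y)}$. With these two inputs secured, the remaining step is purely structural, resting on transitivity of the notion of parabolic subgroup.
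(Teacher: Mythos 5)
Your proof is correct and follows essentially the same route as the paper: the \emph{only if} direction by extending a parabolic-generating reduced factorization of $x$ to a quasi-Coxeter element of $W$, and the \emph{if} direction by induction on the rank, descending from $w$ to the rank-$(n-1)$ parabolic closure of $t_1\cdots t_{n-1}$ via Theorem \ref{prop:prefix_quasi_cox}. The paper compresses all of this into a one-line appeal to Corollary \ref{cor:prefix_quasi}; your version merely makes explicit the points it leaves implicit, namely the transfer of $\leq_T$ into the parabolic subgroup via Theorem \ref{thm:fac_in_par_subg} and the identification of that subgroup as $P(y)$ rather than $\langle t_1,\ldots,t_{n-1}\rangle$.
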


\begin{proof}
The \textit{only if} direction is clear by the definition of a parabolic quasi-Coxeter element. The \textit{if} direction follows inductively by Corollary \ref{cor:prefix_quasi}.
\end{proof}

\medskip

The next result is a generalization of \cite[Corollary 6.10]{BGRW17}.

\begin{corollary}
Let $(W,S)$ be a Coxeter system of rank $n$ and $W'$ a reflection subgroup of $W$ of rank $n-1$. Then $W'$ is a parabolic subgroup if and only if it exists $t\in T$ such that $\langle W',t \rangle=W$ and $P(W')\neq W$. 
In particular, if $W$ is finite, then $W'$ is a parabolic subgroup if and only if it exists $t\in T$ such that $\langle W',t \rangle=W$.
\end{corollary}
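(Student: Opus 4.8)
The plan is to treat the two directions separately, with the forward implication essentially formal and the reverse one carrying the substance, and then to read off the finite ``in particular'' statement.

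For the \emph{only if} direction, suppose $W'$ is parabolic of rank $n-1$. Since $W'$ is conjugate to a standard parabolic subgroup, write $W' = vW_Iv^{-1}$ with $I \subseteq S$; as $|S| = n$ and $|I| = n-1$ we have $I = S \setminus \{s\}$ for a unique $s \in S$. Then $t := vsv^{-1} \in T$ satisfies $\langle W', t \rangle = v\langle I \cup \{s\}\rangle v^{-1} = vWv^{-1} = W$, while $P(W') = W' \ne W$ because $W'$ has rank $n-1 < n$. This settles the forward implication.

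For the \emph{if} direction I would start from $t \in T$ with $\langle W', t \rangle = W$ and $P(W') \ne W$. Conjugating, I may assume $P := P(W')$ is a \emph{standard} parabolic subgroup $W_J$ with $J \subsetneq S$, and I note $W' \subseteq P$ and $t \notin P$ (else $\langle W', t \rangle \subseteq P \ne W$). Writing $\chi(W') = \{t_1, \ldots, t_{n-1}\}$ for the canonical simple system of $W'$, I apply Dyer's algorithm (Lemma \ref{le:DyerAlg}) to $T_0 := \chi(W') \cup \{t\}$, which generates $\langle W', t\rangle = W$ and hence terminates at $\chi(W) = S$. The crucial point is to track, for each $T_i$, the partition into reflections inside $P$ and reflections outside $P$: I claim that at every stage exactly one reflection of $T_i$ lies outside $P$, while the reflections of $T_i \cap P$ always generate $W'$. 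This holds initially, with $t$ the unique reflection outside $P$. An exchange of two reflections in $P$ replaces $\{r,r'\}$ by $\chi(\langle r, r'\rangle) \subseteq \langle r, r'\rangle \subseteq P$ and leaves the subgroup generated by the reflections in $P$ unchanged; an exchange involving the unique reflection $r' \notin P$ and some $r \in P$ yields, by Proposition \ref{prop:root_in_parab_subgrp}, a canonical simple system $\chi(\langle r, r'\rangle) = \{r, r''\}$ containing $r$, where $r'' \notin P$ (if $r'' \in P$ then $r' \in \langle r, r''\rangle \subseteq P$), so $r$ is retained and the single reflection outside $P$ is merely replaced by another. In particular the ``outside'' count never reaches two, so no exchange of two reflections outside $P$ ever occurs and the invariant is preserved.

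At termination $T_k = S$, the reflections of $S$ lying in $W_J$ are exactly $J$; these generate $W_J = P(W')$, and by the invariant they also generate $W'$, whence $W' = P(W') = W_J$ is parabolic, as desired. This argument is exactly the special case of the proof of Theorem \ref{prop:prefix_quasi_cox} in which one takes $x = t_1\cdots t_{n-1}$ and $w = xt$ and feeds the canonical simple system of $W'$ into Dyer's algorithm directly. Finally, in the finite ``in particular'' case the hypothesis $P(W') \ne W$ is automatic: by Lemma \ref{lem:rk_par_cl} the parabolic closure of the reflection subgroup $W'$ has the same rank $n-1$ as $W'$, so $P(W') \ne W$; thus the finite statement follows from the general equivalence, or alternatively from Corollary \ref{cor:prefix_quasi} applied to $\chi(W') \cup \{t\}$. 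The main obstacle is precisely the bookkeeping in the reverse direction: one must guarantee that Dyer's algorithm never merges two reflections outside $P$ and that the reflections in $P$ keep generating exactly $W'$, and it is Proposition \ref{prop:root_in_parab_subgrp} that forces the parabolic reflection to survive each mixed exchange, pinning the ``outside'' multiplicity at one.
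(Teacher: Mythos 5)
Your proof is correct, and both directions check out: the forward implication is the routine one, and your invariant for Dyer's algorithm (exactly one reflection of $T_i$ outside $P(W')$ at every stage, with the reflections inside $P(W')$ always generating $W'$) is maintained correctly, the mixed-exchange case being exactly where Proposition \ref{prop:root_in_parab_subgrp} is needed to keep the parabolic reflection alive and to force its partner out of $P(W')$. The paper disposes of the statement in two lines, declaring the first part an obvious consequence of Theorem \ref{prop:prefix_quasi_cox} and the finite part a consequence of Corollary \ref{cor:prefix_quasi} and Lemma \ref{lem:rk_par_cl}; you instead inline the second half of the proof of Theorem \ref{prop:prefix_quasi_cox} (the Dyer-algorithm bookkeeping) and run it directly on $\chi(W')\cup\{t\}$. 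This is a genuinely different, and in my view preferable, route: deducing the corollary from the bare \emph{statement} of Theorem \ref{prop:prefix_quasi_cox} is not quite immediate, since the theorem only asserts that the element $x=t_1\cdots t_{n-1}$ is a proper parabolic quasi-Coxeter element with $P(x)$ of rank $n-1$, while the corollary needs the specific subgroup $W'$ to coincide with $P(W')$ -- a rank-$(n-1)$ reflection subgroup of a rank-$(n-1)$ parabolic need not be the whole parabolic, so one must either look inside the theorem's proof or supply exactly the argument you give. Your version also sidesteps the Hurwitz-action and reflection-length preliminaries of the theorem's proof (the reduction via Corollary \ref{cor:ordering} and the strong exchange condition), because the containment $W'\subseteq P(W')$ is automatic in your setting, whereas in the theorem one first has to manoeuvre the first $n-1$ factors into $P(x)$. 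The finite ``in particular'' clause is handled as in the paper, via Lemma \ref{lem:rk_par_cl}.
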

\begin{proof}
The first part of the statement follows obviously from Theorem \ref{prop:prefix_quasi_cox}. The second part is a consequence of Corollary \ref{cor:prefix_quasi} and Lemma \ref{lem:rk_par_cl}.
\end{proof}

The proof of the following is essentially the proof of \cite[Theorem 1.5]{BW18}.

\begin{proposition}\label{prop:simple_gen}
Let $W$ be a Weyl group with simple system $S$ of rank $n$, $P$ a parabolic subgroup and $t\in T$ such that $\langle P,t \rangle=W$. Then there exist $r_{1},\ldots,r_{n-1}\in P\cap T$ such that $(W,\lbrace r_{1},\ldots,r_{n-1},t \rbrace)$ is a Coxeter system with set of reflection $T$.
\end{proposition}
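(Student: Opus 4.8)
The plan is to produce the required simple system geometrically, by exhibiting $n$ linearly independent roots that are pairwise non-acute and generate $\Phi$, since such a family is automatically a base. Write $t=s_\beta$ for a root $\beta\in\Phi$ and set $\Phi_P:=\{\alpha\in\Phi\mid s_\alpha\in P\}$. Because $P$ is parabolic, $\Phi_P=\Phi\cap U$ with $U:=\spanr(\Phi_P)$, and $\dim U=\rk(P)$. As $\langle P,t\rangle=W$ and, by \cite[Proposition 2.1]{BGRW17}, $W$ cannot be generated by fewer than $n$ reflections, we get $\rk(P)\geq n-1$. If $\beta\in U$ then $t\in P$ and hence $P=W$; in that degenerate case the claim reduces to finding any base of $\Phi$ having $t$ among its simple reflections, which exists since every root hyperplane is a wall of some chamber. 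So I would treat the main case $\beta\notin U$, where $\dim U=n-1$ and $V=U\oplus\mathbb R\beta$.

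First I would choose the simple system of $\Phi_P$ adapted to $\beta$. The closed Weyl chambers of $\Phi_P$ inside $U$ cover $U$, so the orthogonal projection $\operatorname{proj}_U(\beta)$ lies in the closure of at least one of them; let $\Delta_P=\{\gamma_1,\dots,\gamma_{n-1}\}\subseteq\Phi_P$ be the corresponding simple system. Since each $\gamma_i\in U$, we have $(\beta\mid\gamma_i)=(\operatorname{proj}_U(\beta)\mid\gamma_i)\geq 0$, i.e.\ $\beta$ is dominant for $\Delta_P$. Consequently the $n$ roots $\gamma_1,\dots,\gamma_{n-1},-\beta$ satisfy $(\gamma_i\mid\gamma_j)\leq 0$ for $i\neq j$ (they are simple roots of $\Phi_P$) and $(-\beta\mid\gamma_i)\leq 0$ for all $i$; thus they are pairwise non-acute. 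They are also linearly independent, because $\gamma_1,\dots,\gamma_{n-1}$ form a basis of $U$ while $\beta\notin U$.

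The key input is the classical fact that a linearly independent family of roots with pairwise non-positive inner products is a simple system for the root subsystem it generates. Applying it to $\{\gamma_1,\dots,\gamma_{n-1},-\beta\}$ works out: these roots generate $\Phi$, since $\langle s_{\gamma_1},\dots,s_{\gamma_{n-1}}\rangle=W_{\Phi_P}=P$, whence $\langle s_{\gamma_1},\dots,s_{\gamma_{n-1}},s_{-\beta}\rangle=\langle P,t\rangle=W=W_\Phi$, so the reflection subgroup they generate is all of $W$ and therefore contains every reflection (equivalently, the subsystem they generate is all of $\Phi$). Hence $\{\gamma_1,\dots,\gamma_{n-1},-\beta\}$ is a base of $\Phi$, and with $r_i:=s_{\gamma_i}\in P\cap T$ the set $\{r_1,\dots,r_{n-1},t\}=\{s_{\gamma_1},\dots,s_{\gamma_{n-1}},s_\beta\}$ is a Coxeter generating set of $W$ whose set of reflections is exactly $T$.

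I expect the main obstacle to be the base lemma itself: verifying that linear independence together with pairwise non-acuteness (and generation of $\Phi$) actually forces a genuine simple system rather than merely an obtuse independent configuration. I would either cite it as classical or prove it directly by picking $\lambda\in V$ with $(\lambda\mid\gamma_i)>0$ and $(\lambda\mid-\beta)>0$, and checking that each of the $n$ roots is indecomposable in the positive system $\{\alpha\in\Phi\mid(\lambda\mid\alpha)>0\}$. The only other point needing care is the opening reduction, ensuring $\rk(P)=n-1$ and $\beta\notin U$, so that the dimension count (exactly $n-1$ of the chosen roots lie in $U$ and precisely one lies outside) is exact.
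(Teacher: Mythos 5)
Your argument is correct for Weyl groups and arrives at exactly the configuration the paper's proof aims for, but by a cleaner route. Both proofs reduce the statement to producing $n$ linearly independent, pairwise non-acute roots consisting of a simple system for $P$ together with $\pm\beta$, and then invoke the criterion that such a set whose reflections generate $W$ is a base of $\Phi$ (the paper cites \cite[Lemma 3 (a)]{DL11} for this step). Where you differ is in how the non-acute configuration is reached: the paper fixes the canonical simple system $\Delta_P$ and repeatedly reflects $\beta$ by simple reflections of $P$, proving termination via a strictly descending chain of cones, whereas you fix $\beta$ and choose the chamber of $P$ acting on $U=\spanr(\Phi_P)$ whose closure contains the orthogonal projection of $\beta$; after replacing $\beta$ by $-\beta$ this gives the non-acute family in one step, with no termination argument. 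These are the same idea (your chamber is the one the paper's iteration converges to), but your packaging avoids the descending-cone bookkeeping. Two further small improvements: you get linear independence for free from $\dim U=n-1$ and $\beta\notin U$, while the paper argues via $\spanz(\alpha_1,\ldots,\alpha_{n-1},\beta)=\spanz(\Phi)$; and you treat the degenerate case $P=W$ explicitly, which the paper silently excludes.

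The one place where you must be careful is the ``classical base lemma,'' and it is precisely the point where the Weyl hypothesis enters. For a general finite Coxeter group the statement ``linearly independent, pairwise non-acute, and generating $W$ implies simple system'' is \emph{false}: in the paper's own $H_3$ example following the proposition, the roots of $s_1$, $s_3$ and $t=s_2s_1s_2$ (with signs adjusted as your construction prescribes) form exactly such a configuration, yet $\{s_1,s_3,t\}$ is not a simple system --- the angle between $e_1$ and $s_2(e_1)$ is $3\pi/5$, not of the form $\pi-\pi/m$. What rescues the crystallographic case is that the Cartan integers $\langle\delta_i,\delta_j^{\vee}\rangle$ are nonpositive integers, forcing each rank-two angle to be one of $\pi/2,2\pi/3,3\pi/4,5\pi/6$ and the Cartan matrix to be of finite type; this is the content of \cite[Lemma 3 (a)]{DL11}, which you should cite in that form. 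Your proposed elementary fallback --- checking indecomposability of each $\delta_i$ in the positive system determined by a generic $\lambda$ --- does not follow from pairwise non-acuteness alone (the $H_3$ configuration defeats it), so it is not a safe substitute for the crystallographic lemma. With that citation in place, your proof is complete.
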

\begin{proof}
First we will show that beside the trivial case $n=1$ we have that the rank of $P$ is $n-1$. Thus let $n>1$ and $P$ be a proper parabolic subgroup. Assume that the rank of $P$ is at most $n-2$, then the algorithm described in Lemma \ref{le:DyerAlg} with starting set $T_{0}=\chi(P)\cup \lbrace t \rbrace$ and terminal set $T_{k}=S$ for $k\gg 0$ implies the contradictive statement $|S|=|T_{k}|\leq |\chi(P)\cup \lbrace t \rbrace|<n$. Hence we can assume that the rank of $P$ is $n-1$. Moreover, after a suitable conjugation we assume that $P$ is a standard parabolic subgroup of $W$. Let $W\hookrightarrow \text{GL}(V)$ be the geometric representation of the Coxeter system $(W,S)$, $\Phi$ the corresponding root system with associated symmetric bilinear form $(-,-)$, $\Delta_{P}=\lbrace \alpha_{1},\ldots,\alpha_{n-1} \rbrace\subseteq \Phi^{+}$ the canonical simple system for $P$ in sense of \cite[Section 4.1]{DL11} and $s_{\beta}=t$ for $\beta \in \Phi^{+}$. Consider the cone
\[E=\lbrace x\in V \mid (x,\alpha)>0 \text{ for all } \alpha\in \Delta_{P}\cup \lbrace \beta \rbrace\rbrace.\]
Assume that $(\alpha_{j},\beta)<0$ for some $1\leq j \leq n-1$. Then 
\[(\alpha_{j},s_{\alpha_{j}}(\beta))=(s_{\alpha_{j}}(\alpha_{j}),\beta)=-(\alpha_{j},\beta)<0.\]
Then the next calculation shows that the cone 
\[E_{1}=\lbrace x\in V\mid (x,\alpha)>0 \text{ for all } \alpha\in \Delta_{P}\cup \lbrace s_{\alpha_{j}}(\beta) \rbrace\rbrace\]
is contained in $E$. For $x\in E_{1}$ we have
\[(x,\beta)=(x,s_{\alpha_{j}}(s_{\alpha_{j}}(\beta)))=(x,s_{\alpha_{j}}(\beta))-\frac{2(s_{\alpha_{j}}(\beta),\alpha_{j})}{(\alpha_{j},\alpha_{j})}>0.\]
Moreover, this containment is proper. By Lemma \ref{lem:set_of_refl} we get directly that 
\[T=\bigcup_{v\in W}v \lbrace s_{\alpha_{1}},\ldots,s_{\alpha_{n-1}},s_{\beta} \rbrace v^{-1},\]
thus $\spanz(\alpha_{1},\ldots,\alpha_{n-1},\beta)=\spanz(\Phi)$, because $\Phi$ is crystallographic. Hence $\lbrace \alpha_{1},\ldots,\alpha_{n-1},\beta \rbrace$ is linear independent, and thus $M:=\lbrace x\in V \mid (x,\alpha_{i})>0\mid 1\leq i \leq n-1  \rbrace\cap H_{\beta}\neq \emptyset$, where $H_{\beta}$ is the hyperplane perpendicular to $\beta$. For $y\in M$ we have that
\[(y,s_{\alpha_{j}}(\beta))=(s_{\alpha_{j}}(y),\beta)=(y,\beta)-\frac{2(y,\alpha_{j})(\alpha_{j},\beta)}{(\alpha_{j},\alpha_{j})}<0.\]
Thus $y$ is in the closure of $E$, but not in the closure of $E_{1}$, which implies $E_{1}\subset E$.
Recursively we get a strictly descending sequence of cones
\[E\supset E_{1}\supset E_{2} \supset \ldots\]
Since $P$ is finite, the number of cones constructed above is also finite, and therefore this process will stop after finitely many steps. Let $\gamma=w(\beta)$ with $w\in P$ be the root that is obtained in the previous way. Then the pairwise dihedral angles between the roots of $\Delta_{P}\cup\lbrace \gamma \rbrace$ are obtuse. Hence \cite[Lemma 3 (a)]{DL11} yields that $\Delta_{P}\cup\gamma$ is a simple system for $\Phi$, and thus the pair $(W,\lbrace r_{1},\ldots,r_{n-1},t \rbrace)$ is a Coxeter system with $r_{i}=s_{w^{-1}(\alpha_{i})}$ for $1\leq i \leq n-1$.
Moreover, Lemma \ref{lem:set_of_refl} implies directly that $T=\bigcup_{w\in W}w \lbrace r_{1},\ldots,r_{n-1},t \rbrace w^{-1}$.
\end{proof}

Although Proposition \ref{prop:simple_gen} also holds for all dihedral groups, it fails to be true for all finite Coxeter groups.

\begin{example}
Let $(W,\lbrace s_{1},s_{2},s_{3} \rbrace)$ be a Coxeter system of type $H_{3}$ such that the order of $s_{1}s_{2}$ is five and the order of $s_{2}s_{3}$ is three. Set $P=\langle s_{1},s_{3} \rangle$ and $t=s_{2}s_{1}s_{2}$. An easy calculation shows that $\langle P,t \rangle=W$. But $P\cap T=\lbrace s_{1},s_{3} \rbrace$ and the set $\lbrace s_{1},s_{3},t \rbrace$ is not a simple system for $W$.
\end{example}

\begin{corollary}
Let $W$ be a Weyl group with simple system $S$ of rank $n$ and $P$ a parabolic subgroup of rank $n-1$. All the reflections $t\in T$ such that $\langle P,t \rangle =W$ form a single orbit under conjugation by $P$.
\end{corollary}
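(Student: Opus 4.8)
The plan is to deduce the statement from Proposition \ref{prop:simple_gen} together with the simple transitivity of a finite reflection group on its chambers. Fix two reflections $t,t'\in T$ with $\langle P,t\rangle=\langle P,t'\rangle=W$. Since each $P$-conjugate of such a reflection is again of this form (as $p\langle P,t\rangle p^{-1}=W$ for $p\in P$), the set in question is a union of $P$-conjugacy classes, so it suffices to show that $t$ and $t'$ are conjugate under $P$. By Proposition \ref{prop:simple_gen} and its proof there are simple systems $\{r_1,\ldots,r_{n-1},t\}$ and $\{r_1',\ldots,r_{n-1}',t'\}$ for $W$ with reflection set $T$, where $r_i,r_i'\in P\cap T$ and both $\{r_1,\ldots,r_{n-1}\}$ and $\{r_1',\ldots,r_{n-1}'\}$ are simple systems of the rank $n-1$ Coxeter group $P$.

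First I would normalise the $P$-part. Since a finite Coxeter group acts simply transitively on its simple systems, there is $p_0\in P$ with $p_0\{r_1,\ldots,r_{n-1}\}p_0^{-1}=\{r_1',\ldots,r_{n-1}'\}$. Replacing $t$ by $p_0tp_0^{-1}$ (which neither changes the $P$-conjugacy class we are after nor destroys the property of completing a $P$-simple system to a simple system of $W$), I may assume that the two simple systems share their $P$-part $\{r_1,\ldots,r_{n-1}\}$, so that $\langle r_1,\ldots,r_{n-1}\rangle=P$. Writing $r_i=s_{\beta_i}$ with $\beta_i\in\Phi$ and completing to simple systems $\Delta=\{\beta_1,\ldots,\beta_{n-1},\gamma\}$ and $\Delta'=\{\beta_1',\ldots,\beta_{n-1}',\gamma'\}$ of $\Phi$ (so $t=s_\gamma$ and $t'=s_{\gamma'}$, while the $\beta_i'$ agree with the $\beta_i$ up to sign and reordering), set $V_P=\spanr(\beta_1,\ldots,\beta_{n-1})$. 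Then the line $L=V_P^{\perp}$ is one dimensional and equals the fixed space of $P$, and by Steinberg's theorem the pointwise stabiliser of $L$ in $W$ is exactly $P$.

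The geometric heart of the argument is the standard fact that, for a point $x$ in the relative interior of the ray $F=\overline{C(\Delta)}\cap L$, the chambers of the reflection arrangement of $W$ whose closure contains $x$ are permuted simply transitively by $W_x=P$. The chamber $C(\Delta)$ contains $x$. Since the arrangement is centrally symmetric, exactly one of $C(\Delta')$ and $-C(\Delta')$, say $D$, has $F$ as its edge ray in $L$ and hence contains $x$; moreover $D$ carries the same set of simple reflections $\{r_1,\ldots,r_{n-1},t'\}$ as $C(\Delta')$. By simple transitivity there is $p\in P$ with $p\,C(\Delta)=D$, whence $p\{r_1,\ldots,r_{n-1},t\}p^{-1}=\{r_1,\ldots,r_{n-1},t'\}$. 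Since $p\in P$ preserves $V_P$ and fixes $L$ pointwise, $p\gamma$ still has nonzero component along $L$, so $ptp^{-1}=s_{p\gamma}\notin P$; as $t'$ is the unique simple reflection of $D$ lying outside $P$, this forces $ptp^{-1}=t'$, as desired.

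The main obstacle is the sign bookkeeping in passing between reflections and roots: a priori the common edge rays of the two chambers may be opposite ($F$ versus $-F$), and elements of $P$ fix $L$ pointwise and so cannot interchange these two rays. This is exactly what forces the replacement of $C(\Delta')$ by its antipode $-C(\Delta')$, which has the same set of simple reflections and hence the same completing reflection $t'$. Once this replacement is in place, everything else is a direct application of Proposition \ref{prop:simple_gen} and the elementary chamber geometry of finite reflection groups.
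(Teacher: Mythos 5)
Your argument is correct, but it takes a genuinely different route from the paper's. The paper also starts from Proposition \ref{prop:simple_gen}, but instead of comparing two reflections $t,t'$ it produces a canonical representative: with $P$ standard parabolic and $\Delta_P\cup\{\alpha_n\}$ the canonical simple system of $W$, it shows that the root $\gamma=w(\beta)$ (with $w\in P$ and $t=s_\beta$) furnished by the proof of Proposition \ref{prop:simple_gen} must be positive --- otherwise $\ell_S(ws_\beta)<\ell_S(w)$ and the strong exchange condition would force $t\in P$, contradicting $\langle P,t\rangle=W$ --- so that $\Delta_P\cup\{\gamma\}$ is a simple system contained in $\Phi^+$ and hence, by uniqueness of the canonical simple system, $\gamma=\alpha_n$. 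Thus every admissible $t$ satisfies $wtw^{-1}=s_{\alpha_n}$ for some $w\in P$, and the orbit statement is immediate. Your proof replaces this positivity-and-uniqueness argument by chamber geometry: the fixed line $L$ of $P$, Steinberg's theorem identifying $P$ with the pointwise stabilizer of $L$ (here you implicitly use that $P$, being parabolic of rank $n-1$, contains \emph{every} reflection whose root lies in $V_P$ --- true, but worth saying), simple transitivity of $W_x=P$ on the chambers adjacent to a generic $x\in L$, and the antipodal replacement of $C(\Delta')$ by $-C(\Delta')$ to resolve the sign ambiguity. Both arguments are uniform and both lean entirely on Proposition \ref{prop:simple_gen}; the paper's is shorter because the uniqueness of the positive simple system does in one line what your simple-transitivity-plus-antipode step does geometrically, and it has the mild advantage of exhibiting the distinguished representative $s_{\alpha_n}$ together with an explicit conjugator. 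Conversely, your version makes transparent \emph{why} the statement holds (the chambers at the edge $L$ form a single $P$-torsor). One small economy: the normalisation by $p_0$ is not actually needed, since the element $p$ with $pC(\Delta)=D$ already carries the unique non-$P$ simple reflection of one chamber to that of the other.
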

\begin{proof}
In the following we adopt the notation that is used in the proof of Proposition \ref{prop:simple_gen}, namely $P$ is a standard parabolic subgroup with canonical simple system $\Delta_{P}=\lbrace \alpha_{1},\ldots,\alpha_{n-1} \rbrace\subseteq \Phi^{+}$ in sense of \cite[Section 4.1]{DL11}, $s_{\beta}=t$ for $\beta \in \Phi^{+}$ and $w\in P$ such that $\Delta=\Delta_{P}\cup \lbrace \gamma \rbrace$ with $\gamma=w(\beta)$ is a simple system of $\Phi$. Moreover, let $\Delta_{P}\cup \lbrace \alpha_{n} \rbrace\subseteq \Phi^{+}$ be the canonical simple system of $W$. By \cite[Proposition 5.7]{Hum90} we have that $\gamma=w(\beta)\in -\Phi^{+}$ if and only if $\ell_{S}(w s_{\beta})< \ell_{S}(w)$. The latter implies due to the strong exchange condition \cite[Theorem 5.8]{Hum90} that $t=s_{\beta}\in P$. But since $P$ is a proper parabolic subgroup with $\langle P,t \rangle=W$ we arrive at a contradiction. Hence $\gamma=w(\beta)\in \Phi^{+}$, and thus $\Delta\subseteq \Phi^{+}$. By \cite[Lemma 3 (b)]{DL11} $\Delta$ is the canonical simple system, i.e. we have $\Delta=\Delta_{P}\cup \lbrace \alpha_{n} \rbrace$. In particular, we found an element $w\in P$ with $wtw^{-1}=s_{\alpha_{n}}$.
\end{proof}

Now we prove Theorem \ref{thm:main2} which investigates uniformly the Hurwitz action on the set of reduced reflection factorizations of parabolic quasi-Coxeter elements in Weyl groups. It is already proven case-based in \cite{BGRW17} for finite Coxeter groups ae well as partially for simply laced Weyl groups in \cite{Voi85} and also case-based for affine Coxeter groups in \cite{Weg17a}.

\begin{proof}[Proof of Theorem \ref{thm:main2}]
We proceed by induction on the rank $n$. Let $w \in W$ be a quasi-Coxeter element and $(t_{1},\ldots,t_{n}),(t'_{1},\ldots,t'_{n})\in \Red_{T}(w)$ such that $\langle t_{1},\ldots,t_{n} \rangle =W$. We need to show that  $(t_{1},\ldots,t_{n})$ and $(t'_{1},\ldots,t'_{n})$ lie in the same Hurwitz orbit. If $n=1$ the statement is trivially satisfied. Thus assume that $n>1$. By Lemma \ref{lem:nice_factorization} we can assume that $P:=P(t_{1},\ldots,t_{n-1})\neq W$, and thus $P$ is a parabolic subgroup of rank $n-1$. Hence by Proposition \ref{prop:simple_gen} there exist $s_{1},\ldots,s_{n-1}\in P$ such that $\overline{S}:=\lbrace s_{1},\ldots,s_{n-1},t_{n} \rbrace$ is a simple system for $W$.
Let $x^{-1}=t_{1}\cdots t_{n-1}$ and consider the following path in the Bruhat graph in terms of the simple system $\overline{S}$
\[x\edge xt'_{1}\edge \ldots \edge xt'_{1}\cdots t'_{n}=t_{n}.\]
By Proposition \ref{prop:key_prop} there exists $\sigma\in B_{n}$ such that the factorization $(r_{1},\ldots, r_{n}):=\sigma(t'_{1},\ldots,t'_{n})$ is attached to the following path in the Bruhat graph
\[x\longleftarrow xr_{1}\longleftarrow \ldots \longleftarrow xr_{1}\cdots r_{i}\longrightarrow \ldots \longrightarrow xr_{1}\cdots r_{n}=t_{n}.\]
for $0\leq i \leq n$. If $i=n$, we have the decreasing path
\[x\longleftarrow xr_{1}\longleftarrow \ldots \longleftarrow xr_{1}\cdots r_{i}\longleftarrow \ldots \longleftarrow xr_{1}\cdots r_{n}=t_{n}.\]
Since $x\in P$ and $P$ is a proper standard parabolic subgroup, the strong exchange condition yields that $t_{n}=xr_{1}\cdots r_{n}\in P$ and thus we arrive to the contradiction
\[P=\langle P,t_{n} \rangle=\langle t_{1},\ldots,t_{n}\rangle=W.\]
Therefore we have $i<n$ and hence $1=\ell_{\overline{S}}(t_{n})>\ell_{\overline{S}}(xr_{1}\cdots r_{n-1})$. The latter implies directly $xr_{1}\cdots r_{n-1}=e$, which is equivalent to $r_{n}=t_{n}$. Therefore $(t'_{1},\ldots,t'_{n})$ and $(r_{1},\ldots,r_{n-1},t_{n})$ lie in the same Hurwitz orbit.

Now it suffices to show that $(r_{1},\ldots,r_{n-1})$ and $(t_{1},\ldots,t_{n-1})$ lie in the same Hurwitz orbit. By Theorem \ref{thm:fac_in_par_subg} we have that $r_{1},\ldots,r_{n-1}\in \langle t_{1},\ldots,t_{n-1}\rangle =P$, that is, $(r_{1},\ldots,r_{n-1})$ is a reduced reflection factorization of $t_{1}\cdots t_{n-1}$ in $P$. Hence $t_{1}\cdots t_{n-1}=r_{1}\cdots r_{n-1}$ is a parabolic quasi-Coxeter element by Theorem \ref{prop:prefix_quasi_cox}. Since $P$ is a (not necessary irreducible) Weyl group of rank $n-1$, the induction hypothesis yields that $(r_{1},\ldots,r_{n-1})$ and $(t_{1},\ldots,t_{n-1})$ lie in the same Hurwitz orbit. Altogether, $(t'_{1},\ldots,t'_{n})$ and $(t_{1},\ldots,t_{n})$ lie in the same Hurwitz orbit.

Now let $w$ be a parabolic quasi-Coxeter element such that $P(w)\neq W$. Then by Theorem \ref{thm:fac_in_par_subg} we have that $\Red_{T}(w)=\Red_{T\cap P(w)}(w)$. Thus we can restrict the investigation to the Weyl group $P(w)$, and hence $w$ is a quasi-Coxeter element of $P(w)$. By restricting to $P(w)$ we are in the situation that is already investigated previously.
\end{proof}

For finite Coxeter groups we also have the following converse statement.

\begin{proposition}[{\cite[Proposition 4.3]{BGRW17}}] \label{prop:bgrw_eine_richtung}
Let $(W,S)$ be a finite Coxeter system and $w\in W$ such that the Hurwitz action is transitive on its set of reduced reflection factorizations. Then $w$ is a parabolic quasi-Coxeter element.
\end{proposition}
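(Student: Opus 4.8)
The plan is to reduce to the \emph{essential} case $P(w)=W$ and then to show that transitivity of the Hurwitz action forces a single reduced reflection factorization to generate all of $W$. First I would fix a reduced reflection factorization $(t_1,\ldots,t_m)\in\Red_T(w)$ with $m=\ell_T(w)$. By Lemma \ref{lem:rk_par_cl} and Lemma \ref{lem:parab_cl} the parabolic closure $P(w)=P(t_1,\ldots,t_m)$ is a parabolic subgroup of rank $m$. By Theorem \ref{thm:fac_in_par_subg} every reduced reflection factorization of $w$ has all of its factors in $P(w)$, so $\Red_T(w)=\Red_{T\cap P(w)}(w)$ as sets and the Hurwitz action is literally the same whether computed in $W$ or in the finite Coxeter system $(P(w),\chi(P(w)))$. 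Replacing $W$ by $P(w)$, I may therefore assume $P(w)=W$, so that $w$ is essential and $m=\ell_T(w)$ equals the rank $n$ of $W$. It then suffices to prove that $w$ is a quasi-Coxeter element of $W$, since a quasi-Coxeter element of the parabolic subgroup $P(w)$ is by definition a parabolic quasi-Coxeter element of the original group.

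The key observation in this essential situation is that every reflection $t\in T$ satisfies $t\leq_T w$. Since reduced reflection factorizations correspond to linearly independent roots by Carter's Lemma \ref{lem:Carter}, one has $\ell_T(u)\le n$ for all $u\in W$. Moreover each reflection acts with determinant $-1$ in the Tits representation, so $\ell_T(tw)$ and $\ell_T(w)$ have opposite parities; combined with the triangle inequality $|\ell_T(tw)-\ell_T(w)|\le 1$ this gives $\ell_T(tw)\in\{\ell_T(w)-1,\ell_T(w)+1\}$. As $\ell_T(w)=n$, the value $\ell_T(w)+1=n+1$ is excluded by the bound $\ell_T(tw)\le n$, so $\ell_T(tw)=n-1$, i.e. $t\leq_T w$ in the sense of Definition \ref{def:AbsOrder}. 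In particular, appending a reduced factorization of $tw$ to the one-term factorization $(t)$ shows that every reflection $t$ occurs as the first entry of some reduced reflection factorization of $w$.

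Now I would invoke transitivity. Set $W'=\langle t_1,\ldots,t_m\rangle$; since the Hurwitz action preserves the subgroup generated by the entries of a tuple, every factorization in the orbit of $(t_1,\ldots,t_m)$ generates this same $W'$. Given an arbitrary reflection $t\in T$, the previous paragraph produces a reduced reflection factorization of $w$ beginning with $t$; by transitivity this factorization lies in the orbit of $(t_1,\ldots,t_m)$, so $t$ occurs among the factors of a tuple generating $W'$, whence $t\in W'$. Therefore $T\subseteq W'$ and $W'=W$. Thus $(t_1,\ldots,t_m)$ is a reduced reflection factorization of length $n=\operatorname{rk}(W)$ generating $W$, so $w$ is a quasi-Coxeter element, and unwinding the reduction shows $w$ is a parabolic quasi-Coxeter element of the original group.

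The two arithmetic ingredients in the middle paragraph are routine once one has Carter's lemma and the determinant parity (equivalently $\ell_T(u)=\operatorname{codim}\operatorname{Fix}(u)$), and the reduction to $P(w)$ is clean thanks to Theorem \ref{thm:fac_in_par_subg}. The genuinely essential step -- and the place where the hypothesis is actually consumed -- is the last paragraph: transitivity is exactly what upgrades the statement ``every reflection lies below $w$'' (which concerns the \emph{union} of all reduced factorizations, and which holds even for non-quasi-Coxeter elements such as the central involution in type $B_2$) to the much stronger statement that one \emph{single} factorization already generates $W$. I expect this to be the conceptual crux, since without transitivity the orbit-wise generated subgroups can be proper full-rank reflection subgroups that fail to be parabolic.
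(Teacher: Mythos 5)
Your proof is correct. Note that the paper itself does not reprove this proposition --- it imports it verbatim from \cite[Proposition 4.3]{BGRW17} and only remarks that the proof given there is uniform --- so there is no in-paper argument to match yours against line by line. Your argument is a legitimate, self-contained proof built entirely from tools already established in the paper: the reduction to the essential case via Theorem \ref{thm:fac_in_par_subg} together with Lemmas \ref{lem:rk_par_cl} and \ref{lem:parab_cl} is clean (all reduced factorizations live in $P(w)$, the $T$-length and the Hurwitz orbits are unchanged, and the rank of $P(w)$ equals $\ell_T(w)$), and the observation that in the essential case \emph{every} reflection satisfies $t\leq_T w$ follows correctly from Carter's bound $\ell_T(u)\leq n$ plus the determinant/parity argument. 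This neatly sidesteps the usual appeal to the fixed-space (Brady--Watt/Carter) characterization of $\leq_T$ that the cited proof leans on: instead of identifying exactly which reflections lie below $w$, you arrange that all of them do. The final step --- transitivity forces the orbit-invariant subgroup $W'=\langle t_1,\ldots,t_m\rangle$ to contain every reflection, hence to equal $W$, so that a single reduced factorization of length $n=\operatorname{rk}(W)$ generates $W$ --- is exactly where the hypothesis is consumed, and your $B_2$ example correctly illustrates why transitivity cannot be dropped. I see no gap.
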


\begin{corollary}
Let $W$ be a Weyl group and $w \in W$ a quasi-Coxeter
element. The factors of any reduced reflection factorization of w generate the group W.
\end{corollary}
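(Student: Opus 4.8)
The plan is to derive this as an immediate consequence of the transitivity result established in Theorem~\ref{thm:main2}, together with the basic invariance property of the Hurwitz action. First I would observe that a quasi-Coxeter element is, by definition, a parabolic quasi-Coxeter element: taking the ambient parabolic subgroup to be $W$ itself, the element $w$ admits a reduced reflection factorization whose $n$ factors generate $W$, and in particular $\ell_T(w)=n$, so that every reduced reflection factorization of $w$ has exactly $n$ entries. Hence the hypotheses of Theorem~\ref{thm:main2} are met, and it tells us that the Hurwitz action is transitive on $\Red_T(w)$.

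Next I would fix a reduced reflection factorization $(t_1,\ldots,t_n)\in\Red_T(w)$ witnessing that $w$ is quasi-Coxeter, so that $\langle t_1,\ldots,t_n\rangle=W$, and let $(t'_1,\ldots,t'_n)\in\Red_T(w)$ be an arbitrary reduced reflection factorization. By the transitivity just invoked, these two tuples lie in the same Hurwitz orbit, that is, $(t_1,\ldots,t_n)\sim(t'_1,\ldots,t'_n)$.

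Finally I would appeal to invariant~(a) of the Hurwitz action recorded in the remark following its definition, namely that Hurwitz equivalent factorizations generate the same subgroup. This yields
\[
\langle t'_1,\ldots,t'_n\rangle=\langle t_1,\ldots,t_n\rangle=W,
\]
as desired. There is essentially no obstacle here: the entire content has been packaged into Theorem~\ref{thm:main2}, and the only point worth checking is the (immediate) reduction of the quasi-Coxeter hypothesis to the parabolic quasi-Coxeter setting required by that theorem.
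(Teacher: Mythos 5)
Your proof is correct and is exactly the intended derivation: the paper states this corollary without proof immediately after Theorem~\ref{thm:main2} and relies on the same two ingredients you use, namely transitivity of the Hurwitz action on $\Red_T(w)$ and the fact that Hurwitz equivalent tuples generate the same subgroup. No gaps.
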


Note that that Proposition \ref{prop:bgrw_eine_richtung} is proven uniformly in \cite{BGRW17}. Therefore the combination of this proposition and Theorem \ref{thm:main2} completes uniformly the picture for Weyl groups, that is, we obtain a uniform proof for the following result.

\begin{theorem}[{\cite[Theorem 1.1]{BGRW17}}] \label{thm:main2_weyl}
Let $W$ be a Weyl group and $w\in W$. Then the Hurwitz action is transitive on the set of reduced reflection factorizations of $w$ if and only if $w$ is a parabolic quasi-Coxeter element.
\end{theorem}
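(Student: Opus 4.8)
The plan is to prove the two implications of the biconditional separately, each by invoking one of the two results already at hand, and to use crucially that a Weyl group is in particular a \emph{finite} Coxeter group.

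For the \emph{if} direction, suppose $w \in W$ is a parabolic quasi-Coxeter element. This is precisely the hypothesis of Theorem \ref{thm:main2}, which asserts that the Hurwitz action is transitive on $\Red_T(w)$ whenever $W$ is a Weyl group and $w$ is a parabolic quasi-Coxeter element. So this direction is immediate once Theorem \ref{thm:main2} is in place, and no further argument is needed. I regard this as the substantive half: all the genuine work (reducing to the maximal case via Lemma \ref{lem:nice_factorization}, replacing the generating prefix by a simple system through Proposition \ref{prop:simple_gen}, and the inductive descent into the parabolic subgroup $P=P(t_1,\ldots,t_{n-1})$) has already been carried out in the proof of Theorem \ref{thm:main2}.

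For the \emph{only if} direction, suppose instead that the Hurwitz action is transitive on the set of reduced reflection factorizations of $w$. Here I would simply observe that a Weyl group $W$ is a finite Coxeter system, so Proposition \ref{prop:bgrw_eine_richtung} applies verbatim and yields that $w$ is a parabolic quasi-Coxeter element. Since that proposition is already established uniformly in \cite{BGRW17}, nothing remains to be shown.

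The only point to be careful about is that the two cited inputs have slightly different scopes: Theorem \ref{thm:main2} is stated for Weyl groups, while Proposition \ref{prop:bgrw_eine_richtung} is stated for arbitrary finite Coxeter systems. Both scopes contain the case at hand, so the combination is clean. Consequently there is no genuine obstacle left: the theorem follows formally by conjoining Theorem \ref{thm:main2} (for \emph{if}) and Proposition \ref{prop:bgrw_eine_richtung} (for \emph{only if}), which is exactly the uniform replacement for the case-based argument of \cite{BGRW17}.
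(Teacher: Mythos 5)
Your proof is correct and matches the paper's own argument exactly: the paper likewise obtains Theorem \ref{thm:main2_weyl} by combining Theorem \ref{thm:main2} for the \emph{if} direction with Proposition \ref{prop:bgrw_eine_richtung} for the \emph{only if} direction, noting that a Weyl group is a finite Coxeter group. Nothing is missing.
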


\begin{remark}
The example \cite[Example 5.7]{HK16} shows that in affine Coxeter groups there
exist elements with just one Hurwitz orbit which are not parabolic quasi-Coxeter elements. Therefore it is desirable to determine all those elements of affine Weyl groups with just one Hurwitz orbit.
\end{remark}

Based on Theorems \ref{thm:main1} and \ref{thm:main2} we uniformly obtain conditions on reflection factorization in Weyl groups to determine whether two reflection factorizations of an element lie in the same Hurwitz orbit.

\begin{theorem}[{\cite[Theorem 6.1]{LR16}}]\label{thm:hurw_quasi}
Let $W$ be a Weyl group and $w\in W$ a quasi-Coxeter element. Two reflection factorizations of $w$ lie in the same Hurwitz orbit if and only if they share the same multiset of conjugacy classes. 
\end{theorem}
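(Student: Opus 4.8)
The plan is to prove the two directions separately, the forward (easy) direction being just the invariant noted in the Remark after the Hurwitz action is defined. If two reflection factorizations lie in the same Hurwitz orbit, then by invariant (b) they automatically share the same multiset of conjugacy classes; so only the converse requires work. For the converse, suppose $w = t_1 \cdots t_{m+2k}$ and $w = t'_1 \cdots t'_{m+2k}$ are two reflection factorizations of the quasi-Coxeter element $w$ (of the same length) that share the same multiset of conjugacy classes, where $m = \ell_T(w) = n$ is the rank of $W$.

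The key idea is to use Theorem \ref{thm:main1} to reduce both factorizations to a canonical form, and then apply Theorem \ref{thm:main2}. First I would apply Theorem \ref{thm:main1} to each factorization: there exist braids carrying $(t_1,\ldots,t_{n+2k})$ to a tuple of the shape $(r_1,\ldots,r_n,r_{i_1},r_{i_1},\ldots,r_{i_k},r_{i_k})$ and similarly $(t'_1,\ldots,t'_{n+2k})$ to $(r'_1,\ldots,r'_n,r'_{j_1},r'_{j_1},\ldots,r'_{j_k},r'_{j_k})$, where $(r_1,\ldots,r_n)$ and $(r'_1,\ldots,r'_n)$ are \emph{reduced} reflection factorizations of $w$. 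Because $w$ is a quasi-Coxeter element, its reduced reflection factorizations generate $W$ (this is the content of the Corollary following Proposition \ref{prop:bgrw_eine_richtung}, or follows from the definition together with invariant (a)), so $(r_1,\ldots,r_n)$ and $(r'_1,\ldots,r'_n)$ are reduced reflection factorizations of a quasi-Coxeter element. By Theorem \ref{thm:main2} they lie in the same Hurwitz orbit, so after a further braid we may assume $(r_1,\ldots,r_n) = (r'_1,\ldots,r'_n)$.

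It then remains to deal with the trailing pairs of repeated reflections. The hypothesis that the two original factorizations share the same multiset of conjugacy classes, combined with the fact that the reduced parts now agree, forces the multiset $\{\!\{[r_{i_1}],\ldots,[r_{i_k}]\}\!\}$ of conjugacy classes of the repeated reflections to equal $\{\!\{[r'_{j_1}],\ldots,[r'_{j_k}]\}\!\}$. The main obstacle is precisely this last step: I must show that a pair $(s,s)$ sitting at the tail can be replaced, via Hurwitz moves that fix the leading reduced factorization, by $(s',s')$ whenever $s$ and $s'$ are conjugate \emph{inside} $W = \langle r_1,\ldots,r_n\rangle$. The point is that since $\langle r_1,\ldots,r_n\rangle = W$, any reflection $s'$ is conjugate to $s$ by some element realized as a word in the $r_i$; one then slides the repeated pair $(s,s)$ leftward through the reduced factorization using $\sigma$-moves, conjugating $s$ step by step, and slides it back, thereby converting $(s,s)$ into $(wsw^{-1}, wsw^{-1})$ for the desired conjugating element $w$. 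Matching up the two multisets of trailing pairs one at a time (and reordering trailing pairs among themselves, which is an easy braid computation) completes the argument.
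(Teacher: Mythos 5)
Your route is exactly the one the paper intends: the paper prints no proof of Theorem \ref{thm:hurw_quasi}, only the remark that it is obtained by combining Theorem \ref{thm:main1} (normal form with trailing repeated pairs) and Theorem \ref{thm:main2} (transitivity on reduced factorizations of a quasi-Coxeter element), and your reduction to matching the trailing pairs is the correct way to assemble these, using that every reduced reflection factorization of a quasi-Coxeter element generates $W$.

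The only place where your argument is not yet a proof as written is the final sliding step. A pair $(s,s)$ with product $e$ slides to the right past a single entry $d$ without change (via $\sigma_p\sigma_{p+1}$ applied to $(s,s,d)$, which yields $(d,s,s)$), while sliding it to the left past $d$ replaces it by $(dsd,dsd)$. Hence one pass of ``slide left through part of the reduced prefix, then slide back'' conjugates $s$ only by a suffix product $v_j := r_j r_{j+1}\cdots r_n$, not by an arbitrary element of $W$ as your phrase ``for the desired conjugating element $w$'' suggests. The fix is short but should be said: the conjugators realizable by such moves (keeping the prefix $(r_1,\dots,r_n)$ intact at the end) form a subgroup of $W$ containing every $v_j$, hence every $r_j = v_j v_{j+1}^{-1}$, hence all of $W=\langle r_1,\dots,r_n\rangle$; iterating partial slides therefore realizes any conjugator. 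Together with the easy observations that a pair slides past another pair with trivial net conjugation and that trailing pairs may be permuted among themselves, this closes the gap and your proof is complete.
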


Using the results of \cite{BGRW17} it remains valid for all finite Coxeter groups.

\begin{corollary}
Let $W$ be a Weyl group with simple system $S$. If the Coxeter graph of $(W,S)$ is connected and has a spanning tree with odd labels on all its edges, then two reflection factorizations of the same length of a quasi-Coxeter element in $W$ lie in the same Hurwitz orbit.
\end{corollary}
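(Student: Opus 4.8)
The plan is to deduce this directly from Theorem \ref{thm:hurw_quasi}, whose criterion for Hurwitz equivalence of two reflection factorizations of a quasi-Coxeter element in a Weyl group is exactly that they share the same multiset of conjugacy classes. The crucial observation will be that the combinatorial hypothesis on the Coxeter graph forces $W$ to have only a single conjugacy class of reflections, so that this criterion collapses to a statement about the length alone.

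First I would recall the classical fact that whenever $s,t\in S$ satisfy that $m_{st}$ is odd, the simple reflections $s$ and $t$ are conjugate in $W$: inside the dihedral subgroup $\langle s,t\rangle$ of order $2m_{st}$, an odd value of $m_{st}$ forces all of its reflections into a single conjugacy class, and in particular $s$ is conjugate to $t$. Now fix a spanning tree of $\Gamma(W,S)$ all of whose edges carry odd labels, which exists by hypothesis. Since this tree connects any two vertices of $\Gamma(W,S)$ by a path of odd-labeled edges, propagating the fact above along such a path shows that all elements of $S$ lie in one and the same conjugacy class of $W$. Because every reflection in $T$ is by definition $W$-conjugate to a simple reflection, it follows that $T$ consists of a single conjugacy class.

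Next I would use this to evaluate the multiset invariant of Remark~(b): for any reflection factorization $(t_1,\ldots,t_m)$, the associated multiset of conjugacy classes is simply $m$ copies of the unique reflection conjugacy class, and hence is determined by the length $m$ alone. Consequently, any two reflection factorizations of the same length automatically share the same multiset of conjugacy classes. Applying this to two reflection factorizations of the same length of a quasi-Coxeter element $w\in W$, Theorem \ref{thm:hurw_quasi} then places them in the same Hurwitz orbit, which is the assertion.

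I do not anticipate a genuine obstacle, since the substantive work is already packaged in Theorem \ref{thm:hurw_quasi}. The only point requiring care is the reflection-conjugacy fact for odd dihedral labels and its correct propagation along the odd-labeled spanning tree; in particular one should note that the hypothesis need not make \emph{all} edge labels odd, only those of a chosen spanning tree, yet connectivity of that tree already suffices to merge every simple reflection into one class.
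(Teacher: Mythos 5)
Your argument is correct and is exactly the deduction the paper intends: the odd-labeled spanning tree forces all reflections of $W$ into a single conjugacy class, so the multiset-of-conjugacy-classes invariant is determined by the length alone, and Theorem \ref{thm:hurw_quasi} then yields a single Hurwitz orbit. The paper states this corollary without proof as an immediate consequence of Theorem \ref{thm:hurw_quasi}, so there is nothing to add.
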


\section{Characterization of parabolic quasi-Coxeter elements in affine Coxeter groups}

As in Corollary \ref{cor:FinCharQuasi} for finite Coxeter groups, we aim to characterize parabolic quasi-Coxeter elements in affine Coxeter groups as well. Before we give another direct consequence of Theorem \ref{prop:prefix_quasi_cox} for affine Coxeter groups.

\begin{corollary} \label{cor:ParClosureFinite}
Let $(W,S)$ be an irreducible affine Coxeter system of rank $n$ and $t_{1},\ldots,t_{n}\in T$ with $\langle t_{1},\ldots,t_{n} \rangle=W$ and $P(t_1,\ldots, t_{n-1})\neq W$. Then $\langle t_{1},\ldots,t_{n-1} \rangle$ is a finite parabolic subgroup of rank $n-1$.
\end{corollary}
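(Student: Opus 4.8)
The plan is to reduce everything to Theorem~\ref{prop:prefix_quasi_cox} applied to the prefix $x=t_1\cdots t_{n-1}$, and then to upgrade the conclusion ``$P(x)$ is parabolic of rank $n-1$'' to the stronger statement that the \emph{specific} reflection subgroup $\langle t_1,\dots,t_{n-1}\rangle$ equals $P(x)$.

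First I would check that $w:=t_1\cdots t_n$ is reduced. Since $\langle t_1,\dots,t_n\rangle=W$ is irreducible affine of rank $n$, the remark following Lemma~\ref{lem:nice_factorization} (relying on \cite{Weg17a}) gives that $t_1\cdots t_n$ is reduced, so $w$ is a quasi-Coxeter element with $\ell_T(w)=n$. Then its prefix $(t_1,\dots,t_{n-1})$ is reduced as well, whence $x:=t_1\cdots t_{n-1}$ satisfies $\ell_T(x)=n-1$ and $x\leq_T w$. By Lemma~\ref{lem:parab_cl} we have $P(x)=P(t_1,\dots,t_{n-1})$, which is $\neq W$ by hypothesis. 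Thus all the hypotheses of Theorem~\ref{prop:prefix_quasi_cox} are met, and it yields that $x$ is a proper parabolic quasi-Coxeter element and that $P(x)$ is a parabolic subgroup of rank $n-1$. Since $(W,S)$ is irreducible affine and $P(x)$ is a proper parabolic subgroup, $P(x)$ is finite (in an irreducible affine Coxeter group every proper parabolic subgroup is finite); being generated by reflections in a crystallographic root system, it is in fact a finite Weyl group of rank $n-1$. Moreover $P(x)$ is precisely the proper parabolic subgroup for which $x$ is a quasi-Coxeter element, because that parabolic subgroup is generated by a reduced reflection factorization of $x$ and hence, by Lemma~\ref{lem:parab_cl}, coincides with $P(x)$.

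It remains to pass from the inclusion $\langle t_1,\dots,t_{n-1}\rangle\subseteq P(x)$ to the equality $\langle t_1,\dots,t_{n-1}\rangle=P(x)$, and this is the step I expect to carry the genuine content. By Theorem~\ref{thm:fac_in_par_subg} we have $t_1,\dots,t_{n-1}\in P(x)$ and $\Red_T(x)=\Red_{T\cap P(x)}(x)$; since $\ell_{T\cap P(x)}(x)\geq\ell_T(x)=n-1$ while Carter's Lemma~\ref{lem:Carter} inside the finite group $P(x)$ bounds it by the rank of $P(x)$, we get $\ell_{T\cap P(x)}(x)=n-1$, so $(t_1,\dots,t_{n-1})$ is a reduced reflection factorization of the (full) quasi-Coxeter element $x$ of the Weyl group $P(x)$. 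Now the corollary following Proposition~\ref{prop:bgrw_eine_richtween}\,---\,that every reduced reflection factorization of a quasi-Coxeter element in a Weyl group generates the whole group, a consequence of the Hurwitz transitivity of Theorem~\ref{thm:main2} together with invariant (a) of the Hurwitz action\,---\,forces $\langle t_1,\dots,t_{n-1}\rangle=P(x)$. Hence $\langle t_1,\dots,t_{n-1}\rangle$ is a finite parabolic subgroup of rank $n-1$, as claimed.

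The main obstacle is exactly this final identification: Theorem~\ref{prop:prefix_quasi_cox} only certifies that the \emph{parabolic closure} $P(x)$ is parabolic of rank $n-1$, and a priori $\langle t_1,\dots,t_{n-1}\rangle$ could be a proper reflection subgroup of $P(x)$. Ruling this out is precisely where one needs the transitivity of the Hurwitz action on $\Red_{T\cap P(x)}(x)$, which is available here because $P(x)$ is a finite Weyl group; this is also the point at which the affine statement diverges from the finite Corollary~\ref{cor:prefix_quasi}, where the hypothesis $P(t_1,\dots,t_{n-1})\neq W$ came for free from Lemma~\ref{lem:rk_par_cl} but must now be assumed.
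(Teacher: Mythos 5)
Your proposal is correct and follows the same route as the paper: apply Theorem \ref{prop:prefix_quasi_cox} to the prefix $x=t_1\cdots t_{n-1}$ and then use that every proper parabolic subgroup of an irreducible affine Coxeter system is finite. The paper's own proof consists of exactly these two sentences and passes from ``$x$ is a proper parabolic quasi-Coxeter element'' to ``$\langle t_1,\ldots,t_{n-1}\rangle$ is a proper parabolic subgroup'' without comment; you are right that this passage is not formal, since Theorem \ref{prop:prefix_quasi_cox} (whose proof produces a Hurwitz-transformed tuple $(t'_1,\ldots,t'_{n-1})$ with $\langle t'_1,\ldots,t'_{n-1}\rangle=P(x)$) only certifies that \emph{some} reduced reflection factorization of $x$ generates $P(x)$. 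Your upgrade --- $(t_1,\ldots,t_{n-1})$ lies in $\Red_{T\cap P(x)}(x)$ by Theorem \ref{thm:fac_in_par_subg} and Carter's Lemma \ref{lem:Carter}, $P(x)$ is a finite Weyl group, and hence by Theorem \ref{thm:main2} (through the corollary following Proposition \ref{prop:bgrw_eine_richtung}) every reduced reflection factorization of the quasi-Coxeter element $x$ of $P(x)$ generates all of $P(x)$ --- is exactly the missing step, is available at this point of the paper without circularity, and yields the version of the statement that is actually used later in the proof of Corollary \ref{main4}. Likewise, your preliminary check that $t_1\cdots t_n$ is $T$-reduced (so that $\ell_T(x)=n-1$ and $x\leq_T w$, as Theorem \ref{prop:prefix_quasi_cox} requires), via the remark after Lemma \ref{lem:nice_factorization}, makes explicit a hypothesis the paper leaves implicit. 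In short: same approach, with two genuine gaps of the paper's terse argument correctly identified and filled.
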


\begin{proof}
By Theorem \ref{prop:prefix_quasi_cox} we have that $t_{1}\cdots t_{n-1}$ is a proper parabolic quasi-Coxeter element, that is, $\langle t_{1},\ldots,t_{n-1} \rangle$ is a proper parabolic subgroup. Since all proper parabolic subgroups of an irreducible affine Coxeter system are finite, the claim follows.
\end{proof}

\begin{proposition} \label{prop:InfiniteAffSub}
Let $(W,S)$ be an irreducible affine Coxeter system of rank $n \geq 3$ with set of reflections $T$. Let $t_1, \ldots, t_n \in T$ such that $W= \langle t_1, \ldots, t_n \rangle$. If $\langle t_1, \ldots , t_{n-k} \rangle$ is finite for some $k$ with $2 \leq k <n$, then there exists $t \in \{ t_{n-k+1},\ldots,  t_n \}$ such that $\langle t_1, \ldots , t_{n-k}, t \rangle$ is finite as well.   
\end{proposition}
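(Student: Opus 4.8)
The plan is to argue geometrically with the reflecting hyperplanes. Write each reflection as $t_i = s_{\alpha_i, k_i}$ with $\alpha_i \in \Phi$ and $k_i \in \ZZ$, and let $H_i := H_{\alpha_i, k_i} = \{v \in V : (v \mid \alpha_i) = k_i\}$ be its mirror; recall that $\dim V = n-1$ since an affine Coxeter system of rank $n$ has underlying finite Weyl group $W_\Phi$ of rank $n-1$. I would first invoke the classical fact that a subgroup of an affine Coxeter group generated by reflections is finite if and only if the corresponding mirrors have a common point: if they meet in a point $p_0$, recentering at $p_0$ turns all generators into linear reflections generating a finite group, and conversely a finite reflection group fixes the barycenter of any orbit, which then lies on every mirror (the ``finite $\Rightarrow$ fixed point'' direction also follows from Lemma \ref{lem:Tits}). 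In particular, the hypothesis that $\langle t_1, \ldots, t_{n-k}\rangle$ is finite says precisely that the affine subspace $A := H_1 \cap \cdots \cap H_{n-k}$ is nonempty.

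With this reformulation it suffices to find some index $j \in \{n-k+1, \ldots, n\}$ whose mirror $H_j$ meets $A$: then $H_1 \cap \cdots \cap H_{n-k} \cap H_j \supseteq A \cap H_j \neq \emptyset$, and the criterion makes $\langle t_1, \ldots, t_{n-k}, t_j\rangle$ finite, which is what we want. I would proceed by contradiction and assume that $H_j \cap A = \emptyset$ for every such $j$.

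The next step is elementary linear algebra on the direction space $\vec A$ of $A$. Since $\vec A = \{v : (v \mid \alpha_i) = 0 \text{ for } 1 \le i \le n-k\} = \Span(\alpha_1, \ldots, \alpha_{n-k})^{\perp}$, the affine function $v \mapsto (v \mid \alpha_j)$ is nonconstant on $A$ whenever $\alpha_j \not\perp \vec A$, hence attains the value $k_j$ and forces $H_j \cap A \neq \emptyset$. Thus $H_j \cap A = \emptyset$ compels $\alpha_j \in \Span(\alpha_1, \ldots, \alpha_{n-k})$ for each $j \in \{n-k+1, \ldots, n\}$, and therefore
\[
\Span(\alpha_1, \ldots, \alpha_n) = \Span(\alpha_1, \ldots, \alpha_{n-k}),
\]
a subspace of dimension at most $n-k \le n-2 < n-1 = \dim V$.

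Finally I would derive a contradiction from the global generation hypothesis. Applying the projection $p \colon W \to W_\Phi$ of (\ref{equ:ProjAffFin}) to $\langle t_1, \ldots, t_n\rangle = W$ yields $\langle s_{\alpha_1}, \ldots, s_{\alpha_n}\rangle = p(W) = W_\Phi$. Since $\Phi$ is irreducible, $W_\Phi$ acts irreducibly on $V$ and fixes no nonzero vector; if $\alpha_1, \ldots, \alpha_n$ spanned a proper subspace $U$, then each $s_{\alpha_i}$, and hence all of $W_\Phi = \langle s_{\alpha_i}\rangle$, would fix the nonzero subspace $U^{\perp}$ pointwise, a contradiction. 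Therefore $\Span(\alpha_1, \ldots, \alpha_n) = V$ has dimension $n-1$, contradicting the dimension bound just obtained. I expect the only delicate point to be the precise statement and justification of the finiteness-equals-common-point criterion; once that is in place, the remainder is the above linear algebra together with the behaviour of $p$.
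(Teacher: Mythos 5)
Your proof is correct, and it takes a genuinely different route from the paper's. The paper first shows that $\alpha_1,\ldots,\alpha_{n-k}$ are linearly independent (via Carter's Lemma \ref{lem:Carter} together with the Hurwitz reduction of Theorem \ref{thm:main1}, applied to the projected factorization), and then splits into two cases: if some $\alpha_j$ with $j>n-k$ extends this set to a linearly independent one, it invokes an ellipticity criterion of Brady to conclude that $t_1\cdots t_{n-k}t_j$ has finite order, hence by Lemma \ref{lem:Tits} lies in a finite parabolic subgroup which must contain all the factors $t_1,\ldots,t_{n-k},t_j$; if no such $j$ exists, it derives the same dimension-count contradiction that you obtain. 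You replace all of this by the fixed-point characterization of finite reflection subgroups of an affine group (the generating mirrors have a common point if and only if the group is finite) together with one line of affine linear algebra: a hyperplane $H_{\alpha_j,k_j}$ can miss the nonempty affine subspace $A=H_1\cap\cdots\cap H_{n-k}$ only if $\alpha_j\in\Span(\alpha_1,\ldots,\alpha_{n-k})$. This is more elementary and self-contained --- it avoids Carter's lemma, Theorem \ref{thm:main1}, ellipticity, and the parabolic-closure step entirely, and needs no separate linear-independence argument for $\alpha_1,\ldots,\alpha_{n-k}$ --- at the cost of importing the standard (but worth stating carefully) fact that point stabilizers in $W_{a,\Phi}$ are finite, which holds because such a stabilizer meets the translation lattice trivially and hence embeds into $W_{\Phi}$ under the projection $p$. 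Both arguments terminate in the same final contradiction, namely that $\Span(\alpha_1,\ldots,\alpha_n)$ would have dimension at most $n-k\leq n-2$, while surjectivity of $p$ onto the irreducible group $W_{\Phi}$ forces these roots to span the $(n-1)$-dimensional space $V$.
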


\begin{proof}
Write the reflection $t_i$ for $1 \leq i \leq n$ as $t_i = s_{\alpha_i, \ell_i}$ with $\alpha_i \in \Phi$ and $\ell_i \in \ZZ$.
First observe that, since $\langle t_1, \ldots, t_{n-k} \rangle$ is finite, the set of roots $\{ \alpha_1, \ldots, \alpha_{n-k} \}$ is linearly independent. For $n=3$ this is clear. To see this for $n >3$ let us assume, to the contrary, that $\{ \alpha_1, \ldots, \alpha_{n-k} \}$ is not linearly independent. Hence Carter's Lemma \ref{lem:Carter} implies that $s_{\alpha_1} \cdots s_{\alpha_{n-k}}$ is not reduced. By Theorem \ref{thm:main1} there exists a braid $\sigma \in \B_{n-k}$ such that
$$
\sigma (s_{\alpha_1}, \ldots, s_{\alpha_{n-k}}) = (s_{\beta_1}, \ldots , s_{\beta_{n-k-2}}, s_{\beta}, s_{\beta}).
$$ 
We can apply this braid to the factorization in the affine group as well. We obtain
$$
\sigma(t_1, \ldots , t_{n-k}) = (\star, \ldots, \star, s_{\beta,k_1}, s_{\beta, k_2})
$$
for integers $k_1, k_2 \in \ZZ$. If $k_1=k_2$, then the factorization $t_1 \cdots t_{n-k}$ is not reduced. If $k_1 \neq k_2$, then the infinite rank $2$ reflection subgroup $ \langle s_{\beta,k_1}, s_{\beta, k_2} \rangle$ is contained in the finite subgroup $\langle t_1, \ldots, t_{n-k} \rangle$. In both cases we arrive at a contradiction. Hence we have shown that $\{ \alpha_1, \ldots, \alpha_{n-k} \}$ is linearly independent. We distinguish two cases.

\textit{Case 1:} There exists $j \in \{n-k+1,\ldots, n\}$ such that the set of roots $\{ \alpha_1, \ldots, \alpha_{n-k}, \alpha_{j} \}$ is linearly independent. By \cite[Proposition 5.1]{Brady} the product $x:=t_1 \cdots t_{n-k}t_j$ is therefore elliptic, that is, $x$ is of finite order. By Lemma \ref{lem:Tits} the element $x$ is contained in a finite parabolic subgroup. In particular, the reflections $t_1, \ldots, t_{n-k},t_j$ are contained in this finite parabolic subgroup (see \cite[Section 1.4]{Bes03}). We conclude that
$$
\langle s_{\alpha_{1}, \ell_{1}}, \ldots, s_{\alpha_{n-k}, \ell_{n-k}}, s_{\alpha_{j}, \ell_{j}} \rangle = \langle t_1, \ldots , t_{n-k}, t_j \rangle
$$
is finite.

\textit{Case 2:} For all $j \in \{n-k+1,\ldots, n\}$ the set of roots $\{ \alpha_1, \ldots, \alpha_{n-k}, \alpha_{j} \}$ is not linearly independent. But then 
$$\dim_{\RR} \text{span}_{\RR}(\alpha_1, \ldots , \alpha_{n})=n-k \leq n-2;$$ 
a contradiction, because $\langle s_{\alpha_1}, \ldots , s_{\alpha_n} \rangle$ is a finite Coxeter group of rank $n-1$ (since $\langle t_1,\ldots, t_n\rangle$ is irreducible and affine of rank $n$). Hence this case does not occur.
\end{proof}

\begin{corollary} \label{cor:IndStep}
Let $(W,S)$ be an irreducible affine Coxeter system of rank $n \geq 3$ with set of reflections $T$. Let $t_1, \ldots, t_n \in T$ such that $W= \langle t_1, \ldots, t_n \rangle$. If $\langle t_1, \ldots , t_{n-k} \rangle$ is finite for some $k$ with $2 \leq k <n$, then there exist reflections $r_{n-k+1}, \ldots , r_n$ with
$$
(t_{n-k+1}, \ldots , t_n) \sim (r_{n-k+1}, \ldots , r_n)
$$ 
such that $\langle t_1, \ldots , t_{n-k}, r_{n-k+1}, \ldots , r_{n-1} \rangle$ is finite.   
\end{corollary}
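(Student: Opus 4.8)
The plan is to argue by downward induction on $k$, using Proposition \ref{prop:InfiniteAffSub} as the engine for a single step. The central observation is that throughout the argument I will only ever apply Hurwitz moves \emph{within the block} $(t_{n-k+1},\ldots,t_n)$ of the last $k$ entries, never touching $t_1,\ldots,t_{n-k}$. Since Hurwitz moves preserve the subgroup generated by the tuple on which they act, such moves leave $\langle t_1,\ldots,t_n\rangle=W$ unchanged; consequently, after any such move the full hypotheses of Proposition \ref{prop:InfiniteAffSub}---an irreducible affine system of rank $n\ge 3$, a generating $n$-tuple of reflections, and a finite subgroup generated by an initial segment---persist for the (reindexed) tuple. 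This is what makes the induction legitimate, and it is also why the promised Hurwitz equivalence $(t_{n-k+1},\ldots,t_n)\sim(r_{n-k+1},\ldots,r_n)$ is automatically supported on the last $k$ strands.

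The basic step runs as follows. By Proposition \ref{prop:InfiniteAffSub} there is an index $j\in\{n-k+1,\ldots,n\}$ with $\langle t_1,\ldots,t_{n-k},t_j\rangle$ finite. I would then slide $t_j$, \emph{unchanged}, into the leading block position $n-k+1$. The mechanism is that an inverse Hurwitz move $\sigma_i^{-1}$ transports the entry in position $i+1$ one step to the left without altering it, conjugating instead the entry it passes; so the appropriate sequence of inverse generators applied to the block carries $t_j$ to position $n-k+1$ while only conjugating the intervening entries. After this, the initial segment $\langle t_1,\ldots,t_{n-k},t_j\rangle$ is finite and is generated by $n-(k-1)$ reflections.

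The base case $k=2$ is handled directly: there the block is $(t_{n-1},t_n)$, and either $t_{n-1}$ already works or a single inverse Hurwitz move brings $t_n$ into position $n-1$ unchanged, making $\langle t_1,\ldots,t_{n-2},r_{n-1}\rangle=\langle t_1,\ldots,t_{n-2},t_n\rangle$ finite. For the inductive step $2<k<n$, after sliding $t_j$ to position $n-k+1$ and reindexing so that $t_j$ is the $(n-k+1)$-st generator, I apply the inductive hypothesis with parameter $k-1$ (valid since $2\le k-1<n$) to the trailing block in positions $n-k+2,\ldots,n$. That hypothesis returns Hurwitz-equivalent entries keeping the group finite after the last one is dropped; prepending the fixed entry $t_j$ yields the required $(r_{n-k+1},\ldots,r_n)$, and composing the two block-supported Hurwitz equivalences gives $(t_{n-k+1},\ldots,t_n)\sim(r_{n-k+1},\ldots,r_n)$ with $\langle t_1,\ldots,t_{n-k},r_{n-k+1},\ldots,r_{n-1}\rangle$ finite.

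I expect no genuine mathematical obstacle, since the substantive content is already packaged in Proposition \ref{prop:InfiniteAffSub}; the only delicate point is the bookkeeping. One must verify that each intermediate tuple still satisfies the hypotheses of that proposition after reindexing, that the total transformation is a braid acting on the last $k$ strands alone so that $t_1,\ldots,t_{n-k}$ never move, and---trivially---that each $r_i$ lies in $T$, which holds because Hurwitz moves send reflections to conjugates of reflections.
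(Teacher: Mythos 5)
Your proposal is correct and follows essentially the same route as the paper: invoke Proposition \ref{prop:InfiniteAffSub} to locate a reflection $t_j$ in the trailing block with $\langle t_1,\ldots,t_{n-k},t_j\rangle$ finite, slide it unchanged to position $n-k+1$ by Hurwitz moves supported on the last $k$ strands (conjugating the entries it passes), and iterate $k-1$ times. The paper phrases the iteration informally ("proceeding in this manner") rather than as a formal induction on $k$ with a separate base case $k=2$, but the argument is the same.
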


\begin{proof}
By Proposition \ref{prop:InfiniteAffSub} there exists $i \in \{ 1, \ldots, k\}$ such that $\langle t_1, \ldots, t_{n-k}, t_{n-k+i} \rangle$ is finite. Applying the Hurwitz action, we obtain
$$
(t_1, \ldots, t_n) \sim (t_1, \ldots, t_{n-k}, t_{n-k+i}, t_{n-k+i}t_{n-k+1}t_{n-k+i}, \ldots , t_{n-k+i} t_{n-k+i-1} t_{n-k+i}, t_{n-k+i+1}, \ldots , t_n ).
$$
Note that 
$$
W = \langle t_1, \ldots, t_{n-k}, t_{n-k+i}, t_{n-k+i} t_{n-k+1} t_{n-k+i}, \ldots , t_{n-k+i} t_{n-k+i-1} t_{n-k+i}, t_{n-k+i+1}, \ldots , t_n \rangle.
$$
Since $\langle t_1, \ldots, t_{n-k}, t_{n-k+i} \rangle$ is finite and if $k-1\geq 2$, we can apply Proposition \ref{prop:InfiniteAffSub} as before. Proceeding in this manner, we eventually obtain after $k-1$ steps the claimed finite reflection subgroup.
\end{proof}

We are now able to prove Corollary \ref{main4}.

\begin{proof}[Proof of Corollary \ref{main4}]
We only have to prove the \textit{if} direction. If $\ell_T(x)=n-1$, this is precisely Theorem \ref{prop:prefix_quasi_cox}. Let us therefore assume that $\ell_T(x)=n-k$ for some $k$ with $2 \leq k < n$. Let $w$ be a quasi-Coxeter element and $(t_1, \ldots, t_n) \in \Red_T(w)$ such that $x = t_1 \cdots t_{n-k}$. By \cite[Theorem 1.1]{Weg17a} we have $W= \langle t_1, \ldots, t_n \rangle$. Hence we can apply Corollary \ref{cor:IndStep} to find reflections $r_{n-k+1}, \ldots , r_n$ with
$$
(t_1, \ldots, t_{n-k}, t_{n-k+1}, \ldots , t_n) \sim (t_1, \ldots, t_{n-k}, r_{n-k+1}, \ldots , r_n)
$$
such that $P:=\langle t_1, \ldots, t_{n-k}, r_{n-k+1}, \ldots , r_{n-1} \rangle$ is finite. By Lemma \ref{lem:Tits} the parabolic closure 
$$P(t_1, \ldots, t_{n-k}, r_{n-k+1}, \ldots , r_{n-1})$$ 
is finite as well. In particular, we have $P(t_1, \ldots, t_{n-k}, r_{n-k+1}, \ldots , r_{n-1}) \neq W$. Therefore $P$ is parabolic by Corollary \ref{cor:ParClosureFinite}. Since 
$$
x = t_1 \cdots t_{n-k} \leq_T t_1 \cdots t_{n-k} r_{n-k+1} \cdots r_{n-1},
$$
the element $x$ is a parabolic quasi-Coxeter element in $P$ by Corollary \ref{cor:FinCharQuasi}. As $P$ is parabolic in $W$, the element $x$ is a parabolic quasi-Coxeter element in $W$ as well.

The remaining assertion is now a direct consequence of Corollary \ref{cor:FinOrdQuasi}.
\end{proof}

\begin{remark}
We make use of \cite[Theorem 1.1]{Weg17a} in the proof of Corollary \ref{main4}. The proof given in \cite{Weg17a} include results of \cite{BGRW17} and \cite{LR16}, both of which are proved by a case-based analysis. But for both results, namely Theorem \ref{thm:main1} and Theorem \ref{thm:main2}, we provide uniform proofs in this paper. In particular, this provides a uniform of \cite[Theorem 1.1]{Weg17a}, making all proofs in this paper uniform.
\end{remark}

\bibliographystyle{amsplain}
\bibliography{QuasiCoxeter}
\end{document}